\numberwithin{equation}{section}
\newtheorem{thm}{Theorem}[section]
\newtheorem{lem}[thm]{Lemma}
\newtheorem{rem}[thm]{Remark}
\newcommand{\nn}{\nonumber}
\theoremstyle{definition}
\newcommand{\ds}[1]{\displaystyle #1}
\newcommand{\C}{{\mathbb C}}
\newcommand{\Z}{{\mathbb Z}}
\newcommand{\Q}{{\mathbb Q}}
\newcommand{\bs}{\boldsymbol}
\newcommand{\gl}{\mathfrak{gl}}
\newcommand{\osp}{\mathfrak{osp}}
\newcommand{\sln}{\mathfrak{sl}}
\newcommand{\la}{\lambda}
\newcommand{\sss}{\textsf{s}}
\newcommand{\ssx}{\textsf{x}}
\newcommand{\on}{\operatorname}
\newcommand{\mc}{\mathcal}
\newcommand{\cont}[2]{\contraction[1ex]{}{#1}{}{#2} #1 #2}
\begin{document}

\begin{title}[Extensions of deformed $W$-algebras]
{Extensions of deformed $W$-algebras via $qq$-characters}
\end{title}
\author{B. Feigin, M. Jimbo, and E. Mukhin}

\address{BF: National Research University Higher School of Economics,  101000, Myasnitskaya ul. 20, Moscow,  Russia, and Landau Institute for Theoretical Physics, 142432, pr. Akademika Semenova 1a, Chernogolovka, Russia
}
\email{bfeigin@gmail.com}
\address{MJ: Department of Mathematics,
Rikkyo University, Toshima-ku, Tokyo 171-8501, Japan}
\email{jimbomm@rikkyo.ac.jp}
\address{EM: Department of Mathematics,
Indiana University Purdue University Indianapolis,
402 N. Blackford St., LD 270, 
Indianapolis, IN 46202, USA}
\email{emukhin@iupui.edu}


\begin{abstract} 
We use combinatorics of $qq$-characters to study extensions of deformed $W$-algebras. We describe additional currents and part of the relations in the cases of $\gl(n|m)$ and $\osp(2|2n)$.
\end{abstract}


\maketitle

\section{Introduction}
The principal $W$-algebras form an important class of examples of Vertex Operator Algebras parameterized by root systems which enjoyed a lot of attention. It is well known that the $W$-algebras have interesting deformations, \cite{FR, KP1, KP2, FJMV}, the most famous example being the $\sln_2$ example of 
the deformed Virasoro algebra. The deformed $W$-algebras are described in a bosonic form by currents given by explicit sums of vertex operators which commute with a system of screening operators. 

The screening operators are integrals of screening currents.  The screening currents are also given by vertex operators. The contractions of screening currents are described by a deformed Cartan matrix. It turns out that the deformed $W$-algebra contains, among others, currents organized according to the representations of the corresponding quantum affine algebra. The structure of such currents is combinatorially described by $qq$-characters, which effectively encode the contractions of each term with all screening currents, \cite{N, KP1, FJM}. The $qq$-characters greatly simplify handling sums of vertex operators which commute with screening operators. In this paper we illustrate the advantages of using the $qq$-characters by studying the extensions of the deformed $W$-algebras. 

The extensions of $W$-algebras of type $\sln(n|1)$ have been studied in \cite{FS}. In that work, the $W$-algebras of type $\sln(n|1)$ were complemented by two more currents, $E(z)$ and $F(z)$, which also commute with the screening operators. In contrast to the $W$-algebra currents, $E(z)$ and $F(z)$ have nontrivial momenta. Still, the commutator $[E(z),F(w)]$ is in the $W$-algebra. For $n=2$ the resulting extension is just $\hat\sln_2$ and for $n=3$ it is the Bershadsky-Polyakov algebra, \cite{B, P}.

On the deformed side, the origin of such extension is transparent from the combinatorial point of view. Consider the example of $\gl(2|1)$. 
Let the first root be fermionic and the second one bosonic. We have a family of $4$-dimensional Kac modules generated from the trivial representation of the even part. This family depends on a parameter $\alpha$ which is the highest weight component corresponding to the first root. These modules are lifted to the quantum affine algebra as evaluation modules. The corresponding $qq$-character which  describe the corresponding currents in the deformed $W$-algebra reads:
$$
Y_1^{-1}((qs_1)^{2\alpha} z)\left(Y_1(z)+Y_1(q^2s_1^2z)Y_2(qs_1z)+
Y_1(q^2z)Y_2^{-1}(q^3s_1)+Y_1(q^4s_1^2)\right).
$$ 
This $qq$-character explicitly factorizes. This occurs because the structure of the module does not depend on $\alpha$. Then each factor produces a current which commutes with the screening operators. Using the factors we get the $E(z)$ and $F(z)$ currents:
$$
\left(Y_1(z)+Y_1(q^2s_1^2z)Y_2(qs_1z)+
Y_1(q^2z)Y_2^{-1}(q^3s_1)+Y_1(q^4s_1^2)\right)\mapsto E(z), \qquad  Y_1^{-1}(z) \mapsto  F(z).
$$
Thus $E(z)$ is a sum of 4 terms and $F(z)$ is a single term.

If we chose another Dynkin diagram  of $\gl(2|1)$ type such that both roots are fermionic, then the corresponding $qq$-character would factorize in the form:
$$
\left(Y_1(z)Y_2^{-1}(s_1z)+Y_1(q^2s_1^2z)Y_2^{-1}(q^2s_1z)\right) \left(Y_2(cz)Y_1^{-1}(cs_1z)+Y_2(cq^2s_1^2z)Y_1^{-1}(cq^2s_1z)\right).
$$
where $c=(s_1q_1)^{-2\alpha}$.
Thus, in this realization, the currents $E(z)$ and $F(z)$ both have 2 terms.

In both cases, the algebra generated by the currents 
$E(z)$ and $F(z)$ together with an extra boson
coincides with quantum affine $U_{s_3}(\widehat{\sln}_2)$
(in Wakimoto realization in the second case). 
Thus the extended deformed $W$-algebra of $\gl(2|1)$ is  the
quantum affine $\sln_2$.

Such a phenomenon happens for $\gl(n|m)$ with $m>0$ and $n>0$, and for $\osp(2|2n)$. Thus we obtain a family of new algebras generated by two currents 
$E(z)$, $F(z)$, and an extra boson, 
which contain the $W$-algebras of the corresponding kind.

We conjecture that the extended deformed $W$-algebra does not depend on the choice of a Dynkin diagram.
In the case of $\gl(n|m)$, we expect that at integer level $k$, the extended deformed $W$-algebra has an "integrable" quotient which is a deformation of the image of the $W$-algebra of $\sln(k)$ type acting in the 
$(n+k,m+k)$ minimal model extended by the primary fields 
$\phi_{(n+k-1)\omega_1,0},\phi_{(n+k-1)\omega_{k-1},0}$ 
multiplied by an appropriate Heisenberg algebra. Here 
$\omega_1,\omega_{k-1}$ 
are the first and the last fundamental $\sln_k$ weights.

\medskip

The relations between the currents in deformed $W$-algebras are elliptic, \cite{K1, K2}. However, using additional bosons, one can "undress" the currents and make the relations rational. In this paper we treat only the cases of $\gl(2n|1)$ and $\osp(2|2n)$ in the symmetric choice of the Dynkin diagrams, though the same method can be applied in other cases. The procedure of undressing is not canonical, we make  a choice and then we compute the quadratic relations of types $EE$, $FF$ which are the same as in quantum affine $\sln_2$, see Theorems \ref{prop:EE}, \ref{prop:EE osp}.  We give commutators of $E$ and $F$ with the simplest current of the deformed $W$-algebra, see Theorems \ref{lem:TETF}, \ref{lem:TETF osp}.  For $\gl(2n|1)$, we also compute the commutator $[E,F]$, and find the deformed $W$-currents corresponding to the fundamental representations in the residues, see Theorem \ref{[EF] lem}. These computations also are significantly simplified with the use of $qq$-characters. 

\medskip

We would like to refer the reader to work \cite{H} where the extended deformed $W$-algebra of type $\gl(n|1)$ has been defined and studied. We do our computations in the symmetric Dynkin diagram, while \cite{H} uses a different choice. Also, some relations we give are not written in \cite{H} and vice versa. However, 
we see 
our main contribution in the systematic use of the $qq$-characters which clarify many formulas and constructions and make it easier to understand and generalize. As a result we discover similar extensions in the cases of $\gl(n|m)$ for all $m,n$, $mn\neq 0$, and $\osp(2,2n)$. $n>1$.

\medskip

There are many questions and open problems around extended deformed $W$-algebras. The complete set of relations is not computed. The coset construction similar to \cite{FS} for extended $W$-algebras is not worked out. The representation theory is completely unknown. The conformal limit is not understood. 

\medskip

The paper is organized as follows. First, we discuss the $qq$-characters in Section \ref{qq sec}. Then we give the bosonizations for the case of $\gl(2n|1)$ in Sections \ref{subsec:boson}, \ref{boson gl(2n|1) sec}. We study the relations between various $\gl(2n|1)$ currents in  Section \ref{subsec:rational}. Section \ref{boson osp sec} contains our results in the case of $\osp(2|2n)$.
In Section \ref{boson glnm sec}, we discuss the generating current of the extended deformed $W$-algebra of type $\gl(n|m)$.

\section{The $qq$-characters}\label{qq sec}
The $qq$-characters is a combinatorial tool which can be used to construct sums of vertex operators which commute with a system of screening operators, see \cite{N, KP1, KP2, FJMV, FJM}. 
Implicitly $qq$-characters appeared already in \cite{FR}, \cite{BP}.
In this section we discuss the definition and the examples of the $qq$-characters.

\subsection{The generalities of the $qq$-characters}\label{gen qq sec}
Let $R=\Z[s_1^{\pm 1},\dots,s_t^{\pm1}]$ be a ring of Laurent polynomials in variables $s_1,\dots,s_t$. A monomial $\sigma\in R$ is a product of the form $\prod_{i=1}^ts_i^{a_i}$, where $a_i\in\Z$. 
The monomials in $R$ form a group. Note that by the definition, an integer multiple of a monomial is not a monomial.

We start with a deformed Cartan matrix. 

We call an $l\times l$ matrix $C=(c_{ij})$ a deformed Cartan matrix if it has the following form.
\begin{itemize}
    \item Each entry is a finite alternating sum of monomials: $c_{ij}=\sum_a \sigma_{ij,a}-\sum_b \sigma_{ij,b}\in R$ where, $\sigma_{ij,a},\sigma_{ij,b}$ are distinct monomials.
    \item For each $i\in\{1,\dots,l\}$ we have either $c_{ii}=\sigma_i-\sigma_i^{-1}$ (fermionic root) or $c_{ii}=\sigma_i+\sigma_i^{-1}$ (bosonic root), where $\sigma_i\in R$ are monomials.
\item There exist $d_i\in R$, $i=1,\dots,l$, such that the matrix $B=(d_ic_{ij})$ is symmetric. Moreover, for bosonic roots, $d_i$ have the form   $d_i=-(\sigma_i'-(\sigma_i')^{-1})(\sigma_i''-( \sigma_i'')^{-1})$ where $\sigma_i,\sigma_i''$ are monomials such that $\sigma_i\sigma_i'\sigma_i''=1$, and for fermionic roots we have $d_i=c_{ii}=\sigma_i-\sigma_i^{-1}$.
\item $\det C\neq 0$. 
\end{itemize}

Note, that by definition,  $\sigma_i$  (and $\sigma_i'$, 
$\sigma_i''$) which determine the diagonal entry $c_{ii}$ and the symmetrizing factor $d_i$, are fixed for each $i$.

We often call elements of the set $\{1,\dots, l\}$ labeling the rows and columns of the deformed Cartan matrix "colors". For each color we will have a root.

\medskip 

Some examples of deformed Cartan matrices with only bosonic roots are given in \cite{FR}. The Cartan matrices with only fermionic roots and such that all diagonal entries are the same were studied in \cite{FJM}. A number of examples of deformed Cartan matrices is given in Appendix A of \cite{FJMV}, see also \eqref{gl(2n|1) cartan}, \eqref{osp cartan} below.

Note that there are important examples of deformed Cartan matrices which do not fit the definition above. That includes, in classification of \cite{FJMV}, type $A$, $(1,2,3)$ where $\det C=0$ and type $B$ $(1,2,2)$
where there is a diagonal entry of the form $q-1+q^{-1}$.

\medskip

Given a deformed Cartan matrix $C$, we define the roots.

First, we prepare some notation and terminology.
Let $\mc Y_l=\Z[Y^{\pm1}_{i,\sigma}]$ be the ring of Laurent polynomials in variables $Y_{i,\sigma}$ where $i=1,\dots, l$, and $\sigma\in R$ runs over all monomials. A monomial $m \in\mc Y_l$ is a finite product of the generators $\prod_{j=1}^sY_{i_j,\mu_j}^{a_j}$, where $a_j\in\Z$. 
The monomials in $\mc Y_l$ form a group. 

A monomial $m\in\mc Y_l$ is called generic if it is a finite product of distinct generators, i.e. if all non-trivial powers are one or minus one,  $a_j=\pm1$.

Two monomials $m,n\in \mc Y_l$ are called mutually generic if generators $Y_{i,\sigma}$ present in $m$ are not present in $n$, i.e. if monomials $mn$ and $m/n$ are generic.

A Laurent polynomial $\chi\in\mc Y_l$ is a finite sum of monomials with integer coefficients, $\chi=\sum_m a_mm$. We say $m\in\chi$ if and only if $a_m\neq 0$. We call Laurent polynomials $\chi_1,\ \chi_2\in \mc Y_l$ mutually generic if for all $m\in\chi_1$, $n\in\chi_2$, the monomials $m,n$ are mutually generic.

For a monomial $\mu \in R$, let $\tau_\mu: \mc Y_l \to \mc Y_l$ be the shift automorphism sending $Y_{i,\sigma}\mapsto Y_{i,\mu \sigma}$.

For $i\in\{1,\dots, l\}$, let $\rho_i: \mc Y_l \to \mc Y_1$ be the restriction homomorphism of rings sending $Y_{i,\sigma}\mapsto Y_{1,\sigma}$ and   $Y_{j,\sigma}\mapsto 1$, for $j\neq i$.

\medskip

Let $C=(c_{ij})$ where $c_{ij}=\sum_a \sigma_{ij,a}-\sum_b \sigma_{ij,b}\in R$ be a deformed Cartan matrix.
For $i=1,\dots,l$, and a monomial $\mu\in R$, define the affine root $A_{i,\mu}\in\mc Y$ by  
\begin{align*}
A_{i,1}=\prod_{j=1}^l\prod_a Y_{j,\sigma_{ij,a}}\prod_b Y_{j,\sigma_{ij,b}}^{-1}, \quad  {\rm and} \quad A_{i,\mu}=\tau_\mu (A_{i,1}).
\end{align*}
Note that since $\det C\neq 0$, the affine roots $A_{i,\mu}$ are all algebraically independent.

We often denote $Y_{i,\sigma}$ by $\bs i_\sigma$, $Y_{i,\sigma}^{-1}$ by $\bs i^\sigma$, $Y_{i,\sigma}Y_{i,\mu}$ by  $\bs i_{\sigma,\mu}$, etc.

\medskip

Next, we define basic $qq$-characters in the case $l=1$.

We start with the definition of elementary blocks in the fermionic case. We have $C=(q-q^{-1})$ for some monomial $q\in R$. Recall that we denote $Y_{1,\sigma}$ by $\bs 1_\sigma$, $Y_{1,\sigma}^{-1}$ by $\bs 1^\sigma$, $Y_{1,\sigma}Y_{1,\mu}$ by  $\bs 1_{\sigma,\mu}$, etc. In particular, we have $A_{1,1}=\bs 1_q^{q^{-1}}$.

An elementary block $B^{(k)}\in\mc Y_1$ of length $k+1$ is the sum of $k+1$ monomials of the form
\begin{align*}
B^k_\mu&=m\tau_\mu(\bs 1_{q^{2k-2}\hspace{-3pt},\dots,q^2,1}+\bs 1_{q^{2k-2}\hspace{-3pt},\dots,q^4,q^2,q^{-2}}+ \bs  1_{q^{2k-2}\hspace{-3pt},\dots,q^4,1,q^{-2}}+\dots + \bs 1_{q^{2k-4}\hspace{-3pt},\dots,q^2,1,q^{-2}}) \\
&=m\tau_\mu\big(\bs 1_{q^{2k-2}\hspace{-3pt},\dots,q^2,1}(1+A^{-1}_{1,q}+ A^{-1}_{1,q} A^{-1}_{1,q^3}+\dots + A^{-1}_{1,q} A^{-1}_{1,q^3}\dots A^{-1}_{1,q^{2k-1}})\big),
\end{align*}
where $\mu\in R$ is an arbitrary monomial and $m\in\mc Y_1$ is a monomial of the form $\bs 1^{\nu_1,\dots,\nu_s}$, where $\nu_i\in R$ are monomials and $\nu_i/\mu\neq q^{-2},1,q^2,\dots, q^{2k-2}$ for all $i=1,\dots,s$.

We continue with the definition of elementary blocks in the bosonic case. We have $C=(q+q^{-1})$, $d_1=-(\sigma_1-\sigma_1^{-1})(\sigma_2-\sigma_2^{-1}) $ for some monomials $q,\sigma_1,\sigma_2\in R$ with $q\sigma_1\sigma_2=1$. We have $A_{1,1}=\bs 1_{q,q^{-1}}$.
In the bosonic case we define two types of elementary blocks $B^{(k)}\in\mc Y_1$ of length $k+1$:
\begin{align*}
B^{i,k}_\mu&=\tau_\mu(\bs 1_{\sigma_j^{2k-2}\hspace{-3pt},\dots,\sigma_j^{2},1}+\bs 1_{\sigma_j^{2k-2}\hspace{-3pt},\dots,\sigma_j^4,\sigma_j^2}^{q^2}+ \bs  1_{\sigma_j^{2k-2}\hspace{-3pt},\dots,\sigma_j^4}^{\sigma_j^2q^2,q^2}+\dots + \bs 1^{\sigma_j^{2k-2}q^2\hspace{-3pt},\dots,\sigma_j^2q^2,q^2}) \\
&=\tau_\mu\big(\bs 1_{\sigma_j^{2k-2}\hspace{-3pt},\dots,\sigma_j^2,1}(1+A^{-1}_{1,q}+ A^{-1}_{1,q} A^{-1}_{1,q\sigma_j^2}+\dots + A^{-1}_{1,q} A^{-1}_{1,q\sigma_j^2}\dots A^{-1}_{1,q\sigma_j^{2k-2}})\big),
\end{align*}
where $j=1,2$.

Now we are ready to define basic $qq$-characters.
We say that $\chi\in\mc Y_1$ is a basic $qq$-character if $\chi$
is a sum of products of mutually generic elementary blocks. That is
$\chi=\sum_{j=1}^a \prod_{s=1}^{b_j} B_{sj}$, where all $B_{sj}$ are elementary blocks and $B_{sj}$, $B_{s'j}$ are mutually generic for all $s,s',j$. 

A basic $qq$-character $\chi$  is tame in terminology of \cite{FJM}, meaning that all monomials $m\in\chi$ are generic.
In the bosonic case, there are other tame $qq$-characters which are not basic. For example, there exists a $5$ terms tame $qq$-character:
$$
\bs 1_{1,\sigma_1^{-2},\sigma_2^{-2}}+\bs 1_{1,\sigma_2^{-2}}^{q^2\sigma_1^{-2}}+\bs 1_{1,\sigma_1^{-2}}^{q^2\sigma_2^{-2}}+\bs 1_{1}^{q^2\sigma_1^{-2},q^2\sigma_2^{-2}}+\bs 1^{q^2,q^2\sigma_1^{-2},q^2\sigma_2^{-2}}.
$$
We do not consider non-basic $qq$-characters in this paper and we hope to return to their study in  future publications.

\medskip

Finally, we define the basic $qq$-characters in the general case.
Recall the affine roots $A_{i,m}$.  We say that $\chi$ is an  elementary block of color $i$ and length $k+1$ 
if the restriction  $\rho_i(\chi)$  of $\chi$ to color $i$ is an elementary block of length $k+1$ and 
if for some monomial $\mu\in R$ and some monomial  $m\in\mc Y_l$
$$
\chi=\tau_\mu\big(m(1+A^{-1}_{i,q}+ A^{-1}_{i,q} A^{-1}_{i,q^3}+\dots + A^{-1}_{i,q} A^{-1}_{i,q^3}\dots A^{-1}_{i,q^{2k-1}})
\big)
$$
in the fermionic case $c_{ii}=q-q^{-1}$, or if 
$$
\chi=\tau_\mu\big(m(1+A^{-1}_{i,q}+ A^{-1}_{i,q} A^{-1}_{i,q\sigma_j^2}+\dots + A^{-1}_{i,q} A^{-1}_{i,q\sigma_j^2}\dots A^{-1}_{i,q\sigma_i^{2k-2}})
\big)
$$
in the bosonic case $c_{ii}=(q+q^{-1})$, $d_i=-(\sigma_1-\sigma_1^{-1})(\sigma_2-\sigma_2^{-1})$, and $j=1,2$.

We say that $\chi\in\mc Y_l$ is a basic $qq$-character if for any $i=1,\dots, l$,  $\chi$
can be written as a sum of products of mutually generic elementary blocks of color $i$. That is for each $i$ we can write
$\chi=\sum_{j=1}^{a^{(i)}} \prod_{s=1}^{b_j^{(i)}} B_{sj}^{(i)}$, where all $B_{sj}^{(i)}$ are elementary blocks of color $i$ and $B_{sj}^{(i)}$, $B_{s'j}^{(i)}$ are mutually generic for all $s,s',j$.

For the most part, the $qq$-characters will be sums of elementary blocks of length $1$ or $2$, and sometimes $3$. Moreover, most commonly we will have $b_j^{(i)}=1$.

All $qq$-characters in this paper are basic, so we simply call them the $qq$-characters.

 The first monomial in an elementary block we call the top monomial. This is the unique monomial such that all other monomials in the elementary block are obtained from it by multiplication by inverse roots $A_{i,\sigma}^{-1}$. We call a monomial $m$ in a $qq$-character $\chi$ a top monomial, if for any color $i$, the monomial $m$ is the product of top monomials in elementary blocks of color $i$,  $B_{sj}^{(i)}$ for some $j$ and all $s=1,\dots,b_j$.

Here we consider only $qq$-characters which are Laurent polynomials (with finitely many monomials). In this case, it is easy to see that any $qq$-character has a top monomial. 

Conversely, given a top monomial, one often can 
reconstruct the $qq$-character, recursively by adding the other monomials in elementary blocks.
In fact, we often use this method to obtain the $qq$-characters but in the end we simply state the results and show they are correct. We do not discuss the details, the idea of such procedure is well known, see \cite{FM, FJM}.

\medskip

For each $i=1,\dots,l$, define a $\Z$-grading $\deg_i$ of $\mathcal Y_l$ by setting $\deg_i Y_{j,\sigma}^{\pm1}=0$ if $i$ is bosonic and
$\deg_i Y_{j,\sigma}^{\pm1} =\pm\delta_{ij}$ if $i$ is fermionic. We write $\deg m=(\deg_i(m))_{i=1,\dots,l}$ and call it the degree of $m\in\mathcal Y_l$. 

The currents in the deformed $W$-algebras correspond to $qq$-characters of degree zero. The main idea of this paper to find $qq$-characters corresponding to the $qq$-characters of non-zero degree and add the corresponding currents to the deformed $W$-algebras.

\medskip

The deformed Cartan matrices in this text will depend only on two parameters. We have $R=\Z[s_1^{\pm1},s_2^{\pm1}]$ and we set
$$
s_3=(s_1s_2)^{-1}, \qquad q=s_2, \qquad t_i=s_i-s_i^{-1} \qquad (i=1,2,3).
$$

\subsection{The case of $\gl(2n|1)$}\label{gl(2n|1) sec}
In this section we describe some $qq$-characters related to the deformed $W$-algebra of type $\gl(2n|1)$ in the symmetric parity.

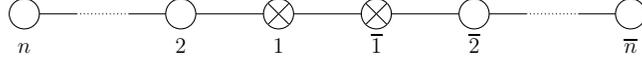
\begin{figure}
\begin{align*}
\begin{tikzpicture}
\dynkin[root radius=.2cm, edge length=1.3cm, 
labels={n,2,1,\overline{1},\overline{2},\overline{n}}]{A}{o.otto.o}
\end{tikzpicture}
\end{align*}
\caption{The $\gl(2n|1)$ Dynkin diagram and labeling.}\label{gl fig}
\end{figure}

We consider the Dynkin diagrams for the Lie superalgebra $\gl(2n|1)$ with the symmetric location of the fermionic roots and label roots  from $n$ to $1$ and then from $\bar 1$ to $\bar n$, such that the fermionic roots are $1,\bar 1$ and the bijection $i  \leftrightarrow \bar i$ is an involution of the Dynkin diagram, see Figure \ref{gl fig}.  

Let $s_1,s_2=q,s_3$ satisfy $s_1s_2s_3=1$.
We study the deformed $2n\times 2n$ Cartan matrix whose non-trivial entries are given by:
\begin{itemize}
\item
$c_{ii}=q+q^{-1}$ $(i\neq 1,\bar 1)$, $c_{11}=c_{\bar1\bar1}=t_3$,

\item
$c_{1\bar1}=c_{\bar1 1}=t_2$, $c_{12}=c_{\bar 1\bar 2}=t_1$
\item
$c_{i,i+1}=c_{\bar{i},\overline {i+1} }=-1$ $(i=2,\dots,n-1)$.
\item
$c_{i+1,i}=c_{ \overline{i+1},\bar{i}}= -1$ $(i=1,\dots,n-1)$.
\end{itemize}
We have $d_i=-t_1t_3$  $(i\neq 1,\bar 1)$, and $d_1=d_{\bar 1}=t_3$.

As an example we write the matrix for the case $\gl(6|1)$:
\begin{align}\label{gl(2n|1) cartan}
C_{\gl(6|1)}=\begin{pmatrix}
q+q^{-1} & -1 & 0 & 0 & 0 & 0 \\
-1 & q+q^{-1} & -1 & 0 & 0 & 0 \\
0 & s_1-s_1^{-1} & s_3-s_3^{-1} & q-q^{-1} & 0 & 0  \\
0 & 0 & q-q^{-1} & s_3-s_3^{-1} & s_1-s_1^{-1} & 0 \\
0& 0 & 0 & -1 & q+q^{-1} & -1 \\
0& 0& 0 & 0 & -1 & q+q^{-1}
\end{pmatrix}.
\end{align} 

Note the symmetry $i\leftrightarrow \bar i$. In this section, this symmetry is always preserved. Any formula has an analog where all colors are replaced  by that rule.

In the classification of \cite{FJMV}, our matrix is of type $(2,\dots,2,1,1,2,\dots,2)$A and it corresponds to the product of Fock spaces $\mc F_2^{\otimes n} \otimes \mc F_1 \otimes \mc F_2^{\otimes n}$ of the quantum toroidal $\gl_1$ algebra.

We recall the notation $Y_{i,\mu}=\bs i_\mu$, $Y_{i,\mu}^{-1}=\bs i^\mu$,  
$Y_{i,\mu}Y_{i,\nu}Y^{-1}_{i,\kappa}=\bs i_{\mu,\nu}^\kappa$, etc.

Then the roots have the form
\begin{align*}
A_{n,1}&=\bs n_{q,q^{-1}} (\bs{n-1})^1, \qquad A_{i,1}=\bs i_{q,q^{-1}} (\bs{i-1})^1(\bs{i+1})^1 \qquad (i=3,\dots, n-1), \\ A_{2,1}&=\bs 2_{q,q^{-1}}\bs 3^1\bs 1_{s_1}^{s_1^{-1}},\qquad\quad \   A_{1,1}=\bs 1_{s_3}^{s_3^{-1}} \bs {\bar 1}_q^{q^{-1}}\bs 2^1.
\end{align*}
The roots $A_{\bar i,1}$ are given by the symmetry $i\leftrightarrow \bar i$.

\medskip

In the case we consider, one expects to find $qq$-characters of degree zero which correspond to finite-dimensional modules of quantum affine algebra $U_q\gl(2n|1)$. We describe some of them now.

We have the $qq$-character $\chi_{1,1}$ corresponding to the vector $(2n+1)$-dimensional representation of $\gl(2n|1)$. It is described as follows.
\begin{align}\label{one box form}
\chi_{1,1}=\bs n_1+\bs n^{q^2}(\bs{n-1})_q+(\bs{n-1})^{q^3}(\bs{n-2})_{q^2}+\dots +
\bs 3^{q^{n-1}}\bs 2_{q^{n-2}}+\bs 2^{q^{n}}\bs 1^{q^{n-1}s_1}_{q^{n-1}s_1^{-1}}+ \\
+\bs 1^{q^{n-1}s_1}_{q^{n+1}s_1}\bs{\bar 1}_{q^{n-1}}^{q^{n+1}}+ 
\bs{\bar 1}^{q^{n+1}}_{q^{n+1}s_1^{2}}\bs{\bar 2}_{q^{n}s_1}
+\bs{\bar 2}^{q^{n+2}s_1}\bs{\bar 3}_{q^{n+1}s_1}+\dots + \bs{\bar n}^{q^{2n}s_1}.\notag
\end{align}
We also set $\chi_{1,\mu}=\tau_\mu(\chi_{1,1})$.

Note that $q^{2n} s_1$ is the central charge of the product $\mc F_2^{\otimes n} \otimes \mc F_1 \otimes \mc F_2^{\otimes n}$.

The $qq$-character $\chi_{1,1}$ corresponds to the representation of $\gl(2n|1)$ which is in the standard way denoted by a Young diagram with a single box. 
Then the $2n+1$ terms of the $qq$-character correspond to a basis of this representation labeled by the semi-standard Young tableaux with the alphabet $\{n,n-1,\dots, 1, 0, \bar 1, \dots,\bar n\}$. 
One can view this representation as described on Figure \ref{one box pic}. 

We chose the an order of the alphabet $\{n,n-1,\dots,1,0,\bar 1,\dots,\bar n\}$ along the arrows on Figure \eqref{one box pic}: 
$n\prec n-1\prec n-2\prec \dots\prec 1\prec 0\prec \bar 1\prec \dots \prec\bar n$.

\begin{figure}
\begin{align*}
\begin{tikzpicture}
\node[rectangle,draw,minimum width = 0.75cm, 
    minimum height = 0.75cm] (r) at (-9,0) {};
 \node[rectangle,draw,minimum width = 0.75cm, 
    minimum height = 0.75cm] (r) at (-6.75,0) {};
    \node[rectangle,draw,minimum width = 0.75cm, 
    minimum height = 0.75cm] (r) at (-2.25,0) {};
    \node[rectangle,draw,minimum width = 0.75cm, 
    minimum height = 0.75cm] (r) at (0,0) {};
    \node[rectangle,draw,minimum width = 0.75cm, 
    minimum height = 0.75cm] (r) at (2.25,0) {};
    \node[rectangle,draw,minimum width = 0.75cm, 
    minimum height = 0.75cm] (r) at (6.75,0) {};
    \node at (-9,0) {\footnotesize$n$};
    \node at (-6.75,0){\footnotesize$n\hspace{-3pt}-\hspace{-3pt}1$};
    \node at (-4.5,0) {$\dots$};
    \node at (4.5,0) {$\dots$};
    \node at (-2.25,0) {\footnotesize$1$};
        \node at (0,0) {\footnotesize$0$};
        \node at (2.25,0) {\footnotesize$\overline{1}$};     
        \node at (6.75,0) {\footnotesize$\overline{n}$};  
\draw[ ->] (-8.5,0)-- node[below]{{\small $A_{n,q}^{-1}$}}(-7.25,0);
\draw[ ->] (-6.25,0)-- node[below]{{\small $A_{n-1,q^2}^{-1}$}}(-5,0);
\draw[ ->] (-4,0) -- node[below]{{\small $A_{2,q^{n-1}}^{-1}$}} (-2.75,0);
\draw[ ->] (-1.75,0)-- node[below]{{\small $A_{1,q^n}^{-1}$}}(-0.5,0);
\draw[ ->] (0.5,0)-- node[below]{\small $A^{-1}_{\overline{1},q^ns_1}$}(1.75,0);
\draw[ ->] (2.75,0)-- node[below]{\hspace{5pt}\small $A^{-1}_{\overline{2},q^{n+1}s_1}$}(4,0);
\draw[ ->] (5,0)-- node[below]{\small $A^{-1}_{\overline{n},q^{2n-1}s_1}\hspace{2pt}$}(6.25,0);
\end{tikzpicture}
\end{align*}
\caption{The $qq$-character corresponding to the $\gl(2n|1)$ vector representation.}\label{one box pic}
\end{figure}

Next we describe the $qq$-character $\chi_{k,1}$ corresponding to the $k$-th skew-symmetric power of the vector representation of $\gl(2n|1)$ $(k=1,\dots,n)$. 
This representation corresponds to one column Young diagram with $k$ boxes.

A 
semi-standard Young tableau is a 
filling of the $k$ boxes in the column with elements of the alphabet $\{n,n-1,\dots,1,0,\bar 1,\dots,\bar n\}$ 
in non-decreasing order from up down. The fermionic filling $0$ is allowed to repeat. The other fillings are bosonic and cannot repeat. Now we describe the monomial corresponding to each filling.

The top term of  $\chi_{k,1}$ corresponding to the minimal filling $n,n-1,\dots,n+1-k$ is $(\bs{n+1-k})_1$. 

In general all monomials $\chi_{k,1}$ are inside of the product $\chi_{1,q^{k-1}}\chi_{1,q^{k-3}}\dots \chi_{1,q^{-k+1}}$. 
Let $M_{i,1}$ be the monomial of $\chi_{1,1}$ corresponding to the filling of the box with 
$i\in\{n,n-1,\dots,1,0,\bar 1,\dots, \bar n\}$ 
and $M_{i,\mu}=\tau_\mu(M_{i,1})$.  Then the monomial in $\chi_{k,1}$ corresponding to the filling of the column with 
$k$-boxes $i_1\preceq i_2\preceq \dots\preceq i_k$ 
is
$$
M_{i_1,\dots,i_k}=M_{i_1,q^{k-1}}M_{i_2,q^{k-3}} \dots M_{i_k,q^{-k+1}}.
$$

We have the following lemma. 
\begin{lem}\label{gl 2n one column lem}
The sum
$$
\chi_{k,1}=\sum_{\substack{i_1,\dots,i_k\in\{n,\dots,1,0,\bar 1,\dots,\bar n\} \\ 
i_1\preceq i_2\preceq \dots\preceq i_k}}M_{i_1,\dots,i_k},
$$
where the equality $i_s=i_{s+1}$ is allowed only if $i_s=i_{s+1}=0$, is a degree zero basic $qq$-character corresponding to the deformed Cartan matrix of $\gl(2n|1)$ type, cf. \eqref{gl(2n|1) cartan}.
\end{lem}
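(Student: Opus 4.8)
The plan is to check directly that the right-hand side meets the definition of a basic $qq$-character of degree zero, color by color. The degree-zero half is immediate: inspection of \eqref{one box form} shows that in every monomial $M_{i,1}$ the color-$1$ and color-$\bar1$ generators occur in opposite-sign pairs, so $\deg_1 M_{i,1}=\deg_{\bar1}M_{i,1}=0$ for every letter $i$ of the alphabet; since $\deg_j$ vanishes identically for bosonic $j$ and $\tau_\mu$ preserves all degrees, each factor $M_{i_s,q^{k+1-2s}}$, hence each product $M_{i_1,\dots,i_k}$, and hence $\chi_{k,1}$, has degree zero.

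For the block structure I would read the summation index as the crystal of $\wedge^k$ of the vector representation: a summand is a one-column semistandard tableau and each color $i$ moves a single box. Fixing a color $i$, I partition the tableaux by their content in the letters that color $i$ does not touch. For bosonic $i\in\{2,\dots,n\}$ (and symmetrically $\bar2,\dots,\bar n$) color $i$ exchanges the two bosonic letters joined by the arrow of Figure \ref{one box pic}; as each of these letters occurs at most once in a column, a tableau either contains exactly one of them, and then pairs with its color-$i$ partner, or contains both or neither, and is then fixed. Paired tableaux differ in one box at some position $p$, so by \eqref{one box form} their monomials differ by $\tau_{q^{k+1-2p}}$ of the vector-representation arrow, i.e.\ by a single inverse root $A_{i,\mu}^{-1}$, and the pair gives a length-$2$ block $M_T(1+A_{i,\mu}^{-1})$. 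A fixed tableau gives a length-$1$ block; here one uses that, since the two exchanged letters are adjacent in the alphabet, they must sit in adjacent boxes, so their color-$i$ contributions cancel and $\rho_i(M_T)=1$, as a length-$1$ block requires. Thus $\rho_i$ sends every block to a genuine bosonic elementary block, and $\chi_{k,1}=\sum_j B^{(i)}_j$ is a sum of single color-$i$ blocks.

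The fermionic colors $i=1,\bar1$ are the substantive case, because the letter $0$ may repeat. For $i=1$, color $1$ exchanges the single letter $1$ with a $0$; grouping by the content outside the string of $1$'s and $0$'s, a tableau whose middle block is nonempty is either ``$1$ followed by $r$ zeros'' or ``$r+1$ zeros'', and these two pair into a length-$2$ block, while a tableau with empty middle block is a fixed length-$1$ block with $\rho_1(M_T)=1$. As before the paired tableaux differ in one box, so their ratio is a single inverse root. The point requiring work is the verification that $\rho_1$ of such a block is an elementary block of $\mc Y_1$: although the lone $1$ and all $r$ zeros each contribute color-$1$ generators, the explicit shifts in \eqref{one box form} make the product of their color-$1$ parts telescope to just two generators $Y_{1,\sigma}^{\pm1}$, so that $\rho_1(M_T)$ is generic and the two monomials of the block form a length-$2$ fermionic elementary block. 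The case $i=\bar1$ follows by the $i\leftrightarrow\bar i$ symmetry.

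Because every color-$i$ decomposition produced this way is a sum of single blocks (each $b_j^{(i)}=1$), no mutual-genericity condition has to be checked, so the heart of the argument is the second and third steps: confirming that each string has length at most two and that the restriction $\rho_i$ lands in an elementary block. I expect the main obstacle to be precisely the fermionic telescoping and the attendant bookkeeping of $q$- and $s_1$-exponents needed to see that the repeated contributions of $0$ collapse correctly and that the resulting prefactor satisfies the genericity constraint in the definition of an elementary block. An alternative, less hands-on route would be to identify the top monomial of $\chi_{k,1}$ and appeal to the reconstruction of a basic $qq$-character from its top monomial as in \cite{FM,FJM}; but the direct tableau argument seems preferable here, since it exhibits the block structure explicitly and keeps the spectral parameters under control.
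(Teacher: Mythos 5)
Your proposal is correct and follows essentially the same route as the paper: the paper's (very terse) proof likewise fixes a color and pairs tableaux related by a single box move $i_s\mapsto i_s^+$ into length-$2$ elementary blocks, with all remaining terms sitting in length-$1$ blocks. The extra details you supply — the degree-zero check and the telescoping of the color-$1$ (resp.\ $\bar1$) generators along a string of $0$'s — are exactly the steps the paper leaves implicit, and your identification of the fermionic case as the only substantive one matches the structure of the paper's argument.
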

\begin{proof}
For $i\in \{n,\dots,1,0,\bar 1,\dots,\bar n\}$, let 
$i^+$  be the next element. 
We need to check the decomposition of $\chi_{k,1}$ into elementary blocks of color $i$.

If $i_s^+\neq i_{s+1}$ 
or if $s=k$ and $i_s\neq \bar n$ or if $i_s=1$, then 
$M_{i_1,\dots,i_s,\dots,i_k}+ M_{i_1,\dots,i_s^+,\dots,i_k}$ 
is an elementary block of color $i_s$ if $i_s\prec 0$,
or $i_s^+$ if $i_s\succeq 0$.
The other cases correspond to elementary blocks of length $1$.
\end{proof}

The number of terms in $\chi_{k,1}$
is clearly equal to the dimension of the $k$-th skew symmetric power of the space of signature $(2n,1)$:
$$
\dim \bigwedge^k
(\C^{2n|1})=\sum_{i=0}^k {2n \choose i}.
$$

We also have the corresponding degree zero basic $qq$-characters $\chi_{\bar k,1}$ obtained from $\chi_{k,1}$ by changing colors $i\leftrightarrow \bar i$.

\medskip

Now we exhibit a $qq$-character $\xi_\mu$ of degree $(0,\dots,0,1,-1,0,\dots,0)$ which has $2^n$ terms.
We have $\xi_\mu=\tau_\mu(\xi_1)$ and 
$\xi_1$ is defined recursively as follows. 

To describe the recursion, we explicitly show the dependence on $n$ and write $\xi_\mu=\xi_\mu^{(n)}$.
We have  
$$
\xi_1^{(1)}=\bs 1_1 \bs{\bar 1}^{s_1}+\bs 1_{q^2s_1^2}\bs{\bar 1}^{q^2s_1},
$$
$$
\xi_1^{(2)}=\bs 1_1 \bs{\bar 1}^{s_1}+\bs 1_{q^2s_1^2}\bs 2_{q s_1}\bs{\bar 1}^{q^2s_1}+ 
\bs 1_{q^2} \bs 2^{q^3s_1}\bs{\bar 1}^{q^2s_1}+
\bs 1_{q^4s_1^2}\bs{\bar 1}^{q^4s_1},
$$
\begin{align*}
\xi_1^{(3)}=\bs 1_1 \bs{\bar 1}^{s_1}+\bs 1_{q^2s_1^2}\bs 2_{qs_1}\bs{\bar 1}^{q^2s_1}+ 
\bs 1_{q^2} \bs 2^{q^3s_1}\bs 3_{q^2s_1} \bs{\bar 1}^{q^2s_1}+
\bs 1_{q^4s_1^2}\bs 3_{q^2s_1}\bs{\bar 1}^{q^4s_1}+\\
+\bs 1_{q^2}\bs 3^{q^4s_1} \bs{\bar 1}^{q^2s_1}+\bs 1_{q^4s_1^2}\bs 2_{q^3s_1}\bs 3^{q^4s_1}\bs{\bar 1}^{q^4s_1}+ 
\bs 1_{q^4} \bs 2^{q^5s_1} \bs{\bar 1}^{q^4s_1}+
\bs 1_{q^6s_1^2}\bs{\bar 1}^{q^6s_1}.
\end{align*}
We also picture the case $n=3$ in Figure \ref{char gl(2n|1)}.

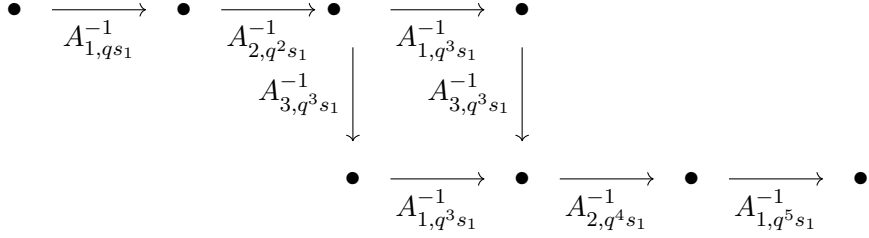
\begin{figure}
\begin{align*}
\begin{tikzpicture}
\node at (-6.75,0) {$\bullet$};
\node at (-4.5,0) {$\bullet$};
\node at (-2.5,0) {$\bullet$};
\node at (0,0) {$\bullet$};
\node at (-2.25,-2.25) {$\bullet$};
\node at (0,-2.25) {$\bullet$};
\node at (2.25,-2.25) {$\bullet$};
\node at (4.5,-2.25) {$\bullet$};
\draw[ ->] (-6.25,0)-- node[below]{{\small $A_{1,qs_1}^{-1}$}}(-5,0);
\draw[ ->] (-4,0)-- node[below]{{\small $A_{2,q^2s_1}^{-1}$}}(-2.75,0);
\draw[ ->] (-1.75,0)-- node[below]{{\small $A_{1,q^3s_1}^{-1}$}}(-0.5,0);
\draw[ ->] (-1.75,-2.25)-- node[below]{{\small $A_{1,q^3s_1}^{-1}$}}(-0.5,-2.25);
\draw[ ->] (0.5,-2.25)-- node[below]{{\small $A_{2,q^4s_1}^{-1}$}}(1.75,-2.25);
\draw[ ->] (2.75,-2.25)-- node[below]{{\small $A_{1,q^5s_1}^{-1}$}}(4,-2.25);
\draw[ ->] (-2.25,-0.5)-- node[left]{{\small $A_{3,q^3s_1}^{-1}$}}(-2.25,-1.75);
\draw[ ->] (0,-0.5)-- node[left]{{\small $A_{3,q^3s_1}^{-1}$}}(0,-1.75);
\end{tikzpicture}
\end{align*}
\caption{The $qq$-character $\xi_1^{(3)}$.}\label{char gl(2n|1)}
\end{figure}

In general we set 
\begin{align}\label{init gl(2n|1)}
\xi_\mu^{(n)}=\xi_\mu^{(n),1}+\xi_\mu^{(n),2}, \qquad 
\xi_\mu^{(1),1}=\bs 1_\mu\bs{\bar 1}^{\mu s_1}, \quad \xi_\mu^{(1),2}=\bs 1_{\mu q^2 s_1^2}\bs{\bar 1}^{\mu q^2s_1},
\end{align}
and
\begin{align}\label{rec}
\begin{aligned}
\xi^{(n),1}_1&=\xi^{(n-1),1}_1+\bs n_{q^{n-1}s_1} \xi^{(n-1),2}_1, \\
\xi^{(n),2}_1&=\bs n^{q^{n+1}s_1} \xi^{(n-1),1}_{q^2}+\xi^{(n-1),2}_{q^2}.
\end{aligned}
\end{align}

\begin{lem}\label{eta lem}
The recursion  \eqref{init gl(2n|1)}, \eqref{rec} defines a basic $qq$-character $\xi_1^{(n)}$ with $2^n$ terms corresponding to the deformed Cartan matrix of $\gl(2n|1)$ type, cf. \eqref{gl(2n|1) cartan}, of degree $(0,\dots,0,1,-1,0,\dots,0)$.
\end{lem}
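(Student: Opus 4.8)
The plan is to induct on $n$, with $n=1$ (and $n=2$, for the new color) as base cases; throughout I use the explicit roots $A_{i,1}$ above and the algebraic independence of the $A_{i,\mu}$. Two of the three assertions are immediate. Every monomial of $\xi^{(1)}_\mu$ carries exactly one $\bs 1_\bullet$ and one $\bs{\bar1}^\bullet$, while \eqref{rec} only multiplies by the bosonic factors $\bs n_\bullet^{\pm1}$ ($n\ge2$) and applies $\tau_{q^2}$; as neither changes $\deg_1$ or $\deg_{\bar1}$, every monomial of $\xi^{(n)}_1$ has degree $(0,\dots,0,1,-1,0,\dots,0)$. For the cardinality, \eqref{rec} gives $|\xi^{(n),1}|=|\xi^{(n),2}|=|\xi^{(n-1),1}|+|\xi^{(n-1),2}|$, so $|\xi^{(n)}_1|=2^n$ once the four pieces in \eqref{rec} are pairwise disjoint: the pieces carrying $\bs n_{q^{n-1}s_1}$ and $\bs n^{q^{n+1}s_1}$ are separated by color-$n$ content, the two remaining $\bs n$-free pieces $\xi^{(n-1),1}_1$ and $\xi^{(n-1),2}_{q^2}$ by their $\bs{\bar1}$-spectral parameters, and within each piece distinctness is the induction hypothesis.

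The content is that $\xi^{(n)}_1$ is a basic $qq$-character, i.e. for each color a disjoint union of elementary blocks (all with $b_j^{(i)}=1$). I sort the monomials by color-$n$ content into $N_0$ (no $\bs n$), $N_+$ (with $\bs n_{q^{n-1}s_1}$) and $N_-$ (with $\bs n^{q^{n+1}s_1}$); by \eqref{rec}, $N_0=\xi^{(n-1),1}_1\sqcup\xi^{(n-1),2}_{q^2}$, $N_+=\bs n_{q^{n-1}s_1}\xi^{(n-1),2}_1$ and $N_-=\bs n^{q^{n+1}s_1}\xi^{(n-1),1}_{q^2}$. For the new color $n$, a direct computation gives $(\bs n_{q^{n-1}s_1}m)A^{-1}_{n,q^ns_1}=\bs n^{q^{n+1}s_1}\kappa m$, with $\kappa$ the color-$(n-1)$ component of $A^{-1}_{n,q^ns_1}$; hence $N_+\leftrightarrow N_-$ is a complete family of length-two color-$n$ blocks exactly when $\kappa\,\xi^{(n-1),2}_1=\xi^{(n-1),1}_{q^2}$. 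For $n\ge3$, $\kappa=\bs{n-1}_{q^ns_1}$ and this is purely formal: applying \eqref{rec} to $\xi^{(n-1)}$,
\begin{align*}
\bs{n-1}_{q^ns_1}\,\xi^{(n-1),2}_1
&=\xi^{(n-2),1}_{q^2}+\bs{n-1}_{q^ns_1}\,\xi^{(n-2),2}_{q^2}\\
&=\xi^{(n-1),1}_{q^2}.
\end{align*}
For $n=2$, $\kappa$ carries two color-$1$ generators (color $1$ is fermionic) and the one-monomial identity is checked by hand. Each monomial of $N_0$ is a length-one block, since any $A^{-1}_{n,\sigma}$ applied to a $\bs n$-free monomial produces two negative color-$n$ generators, impossible in $\xi^{(n)}_1$.

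For $i<n$ and $i=\bar1$ I transport the decomposition from $\xi^{(n-1)}$. Color $\bar1$ is trivial: its root involves $\bs{\bar2}$, absent from $\xi^{(n)}_1$, so each monomial is a length-one $\bar1$-block. For $i\le n-2$ the root $A_{i,\sigma}$ agrees in the $\gl(2n|1)$ and $\gl(2(n-1)|1)$ data and ignores color $n$, so color-$i$ blocks preserve color-$n$ content, are untouched by the prefactors $\bs n_\bullet^{\pm1}$ and $\tau_{q^2}$, and are inherited within each half of $\xi^{(n)}_1$. The delicate color is the frontier $i=n-1$, whose root gains exactly one factor: $A^{-1}_{n-1,\sigma}=A^{\mathrm{old},-1}_{n-1,\sigma}\,\bs n_\sigma$, where $A^{\mathrm{old}}$ is the root making $n-1$ the top color of $\gl(2(n-1)|1)$. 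This is the crucial cancellation: the extra $\bs n_\sigma$ is precisely the prefactor separating the two halves of $\xi^{(n),1}$ (and of $\xi^{(n),2}$), so a top-color block of $\xi^{(n-1)}$ that straddles $\xi^{(n-1),1}\mid\xi^{(n-1),2}$ lifts, via the new root, to a genuine color-$(n-1)$ block of $\xi^{(n)}_1$ lying in one half, creating no new incidences.

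The main obstacle is exactly this frontier bookkeeping, which forces a reinforced induction hypothesis. Since the frontier color of $\xi^{(n)}$ is the new color of $\xi^{(n-1)}$, the analyses interlock and ``$\xi^{(n-1)}$ is a $qq$-character'' alone is insufficient: one also needs to know how its blocks sit relative to $\xi^{(n-1),1}\mid\xi^{(n-1),2}$. I would therefore carry the strengthened assertion that, besides $\xi^{(n)}_1$ being a basic $qq$-character, (i) its color-$n$ blocks are the straddling pairs $N_+\leftrightarrow N_-$ with $N_+\subseteq\xi^{(n),1}$ and $N_-\subseteq\xi^{(n),2}$, and (ii) all blocks of colors $\le n-1$ lie in a single half. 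The matching identity being formal and level-independent, (i) and (ii) then propagate through \eqref{rec} and the induction closes.
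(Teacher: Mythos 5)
Your argument follows essentially the same route as the paper's proof: induction on $n$; color-$n$ blocks given by the pairing $N_+\leftrightarrow N_-$, whose validity rests on the identity $(\bs{n-1})_{q^ns_1}\,\xi^{(n-1),2}_1=\xi^{(n-1),1}_{q^2}$ (exactly the relation $\xi^{(n-1),2}=\tau_{q^2}\bigl((\bs{n-1})^{q^{n-2}s_1}\xi^{(n-1),1}\bigr)$ that the paper invokes); colors $i\le n-2$ and the barred colors handled as you do; and the frontier color $n-1$ treated by expanding each half one more level so that the extra factor $\bs n_\sigma$ in the new root absorbs the prefactor separating the two halves. Your reinforced induction hypothesis --- recording that color-$n$ blocks are precisely the straddling pairs and that all lower-color blocks lie in a single half --- makes explicit what the paper's ``follows immediately from the induction hypothesis'' leaves implicit, and is needed for the inheritance step to be airtight; this is a point where you are more careful than the published argument.

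One step is wrong as stated: the two $\bs n$-free pieces $\xi^{(n-1),1}_1$ and $\xi^{(n-1),2}_{q^2}$ are \emph{not} separated by their $\bs{\bar 1}$-spectral parameters once $n\ge4$. By \eqref{xi-explicit} the $\bs{\bar 1}$ parameter of $\tilde\xi_\nu$ is $q^{2|\nu|}s_1$ with $|\nu|=\sum_i\nu_i$; the first piece realizes $|\nu|\le n-2$ and the second $|\nu|\ge2$, so the ranges overlap. For instance, at $n=4$ the monomial $\bs 3_{q^2s_1}\bs 1_{q^4s_1^2}\bs{\bar 1}^{q^4s_1}$ of $\xi^{(3),1}_1$ and the monomial $\bs 3^{q^6s_1}\bs 1_{q^4}\bs{\bar 1}^{q^4s_1}$ of $\xi^{(3),2}_{q^2}$ share the factor $\bs{\bar 1}^{q^4s_1}$. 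The $2^n$ count still holds because the monomials are distinct for other reasons --- e.g.\ $\tilde\xi_\nu$ determines $\nu$: the color-$1$ generator $Y_{1,q^{2|\nu|}s_1^{2\nu_1}}$ recovers $|\nu|$ and $\nu_1$, and the exponents $-\nu_i+\nu_{i-1}$ of the color-$i$ generators then recover $\nu_2,\dots,\nu_n$ successively; alternatively one can descend the recursion, noting at each stage that the only possible coincidences are between pieces that are free of one further color. So this is a local, easily repaired error rather than a structural gap, but as written that justification fails.
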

\begin{proof} We proceed by induction on $n$.
Assume the statement is true for $(n-1)$. To prove it for the next case, we have to check the decomposition of $\xi_1^{(n)}$ into elementary blocks for all $i\in\{n,\dots,1,\bar 1,\dots,\bar n\}$. 

For $i=\bar 1, \dots,\bar n$, all blocks are clearly of length one. For $i=1,\dots,n-2$, the statement follows immediately from the induction hypothesis. 

Note that $\xi^{(n-1),2}=\tau_{q^2}(\bs (\bs{n-1})^{q^{n-2}s_1}\xi^{(n-1),1})$. Therefore 
$$
A^{-1}_{n,q^ns_1}\bs n_{q^{n-1}s_1} \xi^{(n-1),2}_1=\bs n^{q^{n+1}s_1} \xi^{(n-1),1}_{q^2}.
$$
In addition, clearly $\xi_1^{(n-1),1}, \xi_1^{(n-1),2}$ do not contain variable $\bs n$.
That proves the statement for $i=n$.

The statement for $i=n-1$ holds for $\xi_1^{(n),1}$ and $\xi_1^{(n),2}$ separately. Indeed, for example,
\begin{align*}
\xi_1^{(n),1}=\xi_1^{(n-2),1}+(\bs{n-1})_{q^{n-2}s_1}\xi_1^{(n-2),2}+ \bs n_{q^{n-1}s_1}(\bs{n-1})^{q^{n}s_1}\xi_{q^2}^{(n-2),1}
+\bs n_{q^{n-1}s_1}\xi_{q^2}^{(n-2),2}\\
= \xi_1^{(n-2),1}+(\bs{n-1})_{q^{n-2}s_1}\xi_1^{(n-2),2}(1+A^{-1}_{n-1, q^{n-1}s_1})
+\bs n_{q^{n-1}s_1}\xi_{q^2}^{(n-2),2}.
\end{align*}

\end{proof}

The recursion in Lemma \ref{eta lem} can be solved explicitly.

\begin{lem}\label{xi explicit lem} We have
\begin{align}\label{xi-explicit}
\xi^{(n)}_1=&\sum_{\nu\in\{0,1\}^n}\tilde\xi_{\nu},\\
&\tilde\xi_{\nu_n,\ldots,\nu_1}
=\prod_{i=2}^n Y_{i,q^{2\sum_{j=i+1}^n\nu_j+\nu_i-\nu_{i-1}+i}s_1}^{-\nu_i+\nu_{i-1}}\cdot 
Y_{1,q^{2\sum_{j=1}^n\nu_j}s_1^{2\nu_1}}
Y_{\bar 1,q^{2\sum_{j=1}^n\nu_j}s_1}^{-1}\,.\notag
\end{align}
\end{lem}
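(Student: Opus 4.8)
The plan is to establish \eqref{xi-explicit} by induction on $n$, matching the closed form against the recursion \eqref{init gl(2n|1)}, \eqref{rec}. Because that recursion sees $\xi_1^{(n)}$ only through its two pieces $\xi_1^{(n),1}$ and $\xi_1^{(n),2}$, I would prove the strengthened statement that the top bit $\nu_n$ of $\nu=(\nu_n,\dots,\nu_1)$ records which piece a monomial lies in, i.e.
$$
\xi_1^{(n),1}=\sum_{\nu\in\{0,1\}^n,\ \nu_n=0}\tilde\xi_{\nu},\qquad
\xi_1^{(n),2}=\sum_{\nu\in\{0,1\}^n,\ \nu_n=1}\tilde\xi_{\nu}.
$$
The base case $n=1$ is immediate: the product over $i$ in \eqref{xi-explicit} is empty, so $\tilde\xi_{0}=Y_{1,1}Y_{\bar1,s_1}^{-1}$ and $\tilde\xi_{1}=Y_{1,q^2s_1^2}Y_{\bar1,q^2s_1}^{-1}$, which are exactly $\xi_1^{(1),1}$ and $\xi_1^{(1),2}$ of \eqref{init gl(2n|1)}.

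The heart of the inductive step is to express $\tilde\xi_{\nu}$ at level $n$ through $\tilde\xi_{\nu'}$ at level $n-1$, where $\nu'=(\nu_{n-1},\dots,\nu_1)$ is obtained by dropping the top bit. Reading off the color-$n$ factor in \eqref{xi-explicit}, its power is $-\nu_n+\nu_{n-1}$ and its $q$-exponent is $\nu_n-\nu_{n-1}+n$ (the sum $\sum_{j=n+1}^n\nu_j$ being empty). Evaluating the two values of $\nu_n$ and $\nu_{n-1}$ yields the four identities
$$
\tilde\xi_{0,\nu'}=\begin{cases}\tilde\xi_{\nu'},&\nu_{n-1}=0,\\ Y_{n,q^{n-1}s_1}\,\tilde\xi_{\nu'},&\nu_{n-1}=1,\end{cases}
\qquad
\tilde\xi_{1,\nu'}=\begin{cases}Y_{n,q^{n+1}s_1}^{-1}\,\tau_{q^2}(\tilde\xi_{\nu'}),&\nu_{n-1}=0,\\ \tau_{q^2}(\tilde\xi_{\nu'}),&\nu_{n-1}=1.\end{cases}
$$
Here the appearance of $\tau_{q^2}$ for $\nu_n=1$ reflects that every remaining $q$-exponent in \eqref{xi-explicit}, namely $2\sum_{j=i+1}^n\nu_j$ for the colors $2,\dots,n-1$ and $2\sum_{j=1}^n\nu_j$ for colors $1,\bar1$, increases by $2\nu_n$ when the top bit is removed, while the $s_1$-powers are untouched.

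Finally I would sum these identities over $\nu'\in\{0,1\}^{n-1}$. Splitting each sum according to $\nu_{n-1}=0$ or $1$ and applying the strengthened induction hypothesis, so that the two sub-sums reproduce $\xi_1^{(n-1),1}$ and $\xi_1^{(n-1),2}$ (and their shifts $\xi_{q^2}^{(n-1),1}=\tau_{q^2}(\xi_1^{(n-1),1})$, $\xi_{q^2}^{(n-1),2}=\tau_{q^2}(\xi_1^{(n-1),2})$), the $\nu_n=0$ terms collect to $\xi_1^{(n-1),1}+Y_{n,q^{n-1}s_1}\,\xi_1^{(n-1),2}$ and the $\nu_n=1$ terms to $Y_{n,q^{n+1}s_1}^{-1}\,\xi_{q^2}^{(n-1),1}+\xi_{q^2}^{(n-1),2}$. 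These are exactly $\xi_1^{(n),1}$ and $\xi_1^{(n),2}$ in \eqref{rec}, which closes the induction and propagates the $\nu_n$-labeling. I expect the only genuinely delicate point to be the exponent bookkeeping in the four identities above: one must verify that passing from level $n$ to level $n-1$ changes nothing beyond the single color-$n$ factor and the uniform shift $\tau_{q^2}$ in the $\nu_n=1$ case, the critical check being that the color-$(n-1)$ argument carries the term $2\sum_{j=n}^n\nu_j=2\nu_n$ that the case split on $\nu_n$ precisely absorbs.
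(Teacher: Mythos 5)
Your proposal is correct and follows essentially the same route as the paper: both verify that the closed form satisfies the defining recursion \eqref{init gl(2n|1)}, \eqref{rec} by checking the four cases of $(\nu_n,\nu_{n-1})$, with the top bit $\nu_n$ recording which of the two pieces $\xi_1^{(n),1}$, $\xi_1^{(n),2}$ a monomial belongs to. Your write-up merely makes explicit the exponent bookkeeping (the uniform shift $\tau_{q^{2\nu_n}}$ and the single color-$n$ factor $Y_{n,q^{n+\nu_n-\nu_{n-1}}s_1}^{-\nu_n+\nu_{n-1}}$) that the paper compresses into its displayed recursion \eqref{xi-rec}.
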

\begin{proof}
Indeed,  by Lemma \ref{eta lem}, the recursion has the form
\begin{align*}
&\tilde\xi_{0,0,\nu'}=\tilde\xi_{0,\nu'}\,,\quad \tilde\xi_{0,1,\nu'}=\tilde\xi_{1,\nu'}\bs{n}_{q^{n-1}s_1}\,,\\
&\tilde\xi_{1,0,\nu'}=\tau_{q^2}\bigl(\tilde\xi_{0,\nu'}\bigr)\bs{n}^{q^{n+1}s_1}\,,
\quad \tilde\xi_{1,1,\nu'}=\tau_{q^2}\bigl(\tilde\xi_{1,\nu'}\bigr)\,,
\end{align*}
or equivalently 
\begin{align}\label{xi-rec}
\tilde\xi_{\nu_n,\nu_{n-1},\nu'}=\tau_{q^{2\nu_n}}\bigl(\tilde\xi_{\nu_{n-1},\nu'}\bigr)
Y_{n,q^{n+\nu_n-\nu_{n-1}}s_1}^{-\nu_n+\nu_{n-1}}\,
\quad\text{for $\nu_n,\nu_{n-1}\in\{0,1\}$},
\end{align}
with the initial condition
$\tilde\xi_0=\bs{1}_1\bar{\bs{1}}^{s_1}$, $\tilde\xi_1=\bs{1}_{q^2s_1^2}\bar{\bs{1}}^{q^2s_1}$.

The lemma follows.
\end{proof}

By the symmetry $i \leftrightarrow \bar i$, using  $\xi_1$ we also obtain a basic $qq$-character with $2^n$ terms of degree $(0,\dots,0,-1,1,0,\dots,0)$ which we denote $\eta_1$.

\begin{rem}
There are many more $qq$-characters with finitely many terms and various degrees in the presence of fermionic roots. For example,
we have a $qq$-character of degree $(1,0,\dots,-1,0,\dots,0)$ with $n$ terms
$$
\bs 1^{q^{n-1}s_1^{-1}}\left(\bs n_1+\bs n^{q^2}(\bs{n-1})_q+(\bs{n-1})^{q^3}(\bs{n-2})_{q^2}+\dots +
\bs 3^{q^{n-1}}\bs 2_{q^{n-2}}+\bs 2^{q^{n}}\bs 1^{q^{n-1}s_1}_{q^{n-1}s_1^{-1}}\right).
$$
While it is an interesting problem to understand the totality of such $qq$-characters, the products of such $qq$-characters never seem to have degree zero.
For that reason we do not discuss them in this text.
\end{rem}

\subsection{The case of $\osp(2|2n)$} \label{osp(2|2n) sec}

In this section we describe some $qq$-characters related to the deformed $W$-algebra of type $\osp(2|2n)$. 

\begin{figure}
\begin{align*}
\begin{tikzpicture}
\dynkin[root radius=.2cm, edge length=1.3cm, labels={n,3,2,1,\overline{1}}]{D}{o.oott}
\draw (3.98,-0.92) to (3.98,0.92);
\draw (4.08,-0.92) to (4.08,0.92);
\end{tikzpicture}
\end{align*}
\caption{The $\osp(2|2n)$ Dynkin diagram and labeling.}\label{osp fig}
\end{figure}
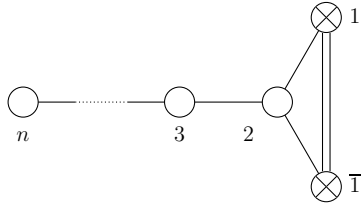

We consider the Dynkin diagrams for the Lie superalgebra $\osp(2|2n)$ with the symmetric location of the fermionic roots and label roots such that the fermionic roots are $1,\bar 1$ and the bijection $1  \leftrightarrow \bar 1$ keeping the rest of the roots is an involution of the Dynkin diagram, see Figure \ref{osp fig}. 

Let $s_1,s_2=q,s_3$ satisfy $s_1s_2s_3=1$.
We study the deformed $(n+1)\times (n+1)$ Cartan matrix whose non-trivial entries are given by:
\begin{itemize}
\item
$c_{ii}=q+q^{-1}$ $(i\neq 1,\bar 1)$, $c_{11}=c_{\bar1\bar1}=t_3$,
\item
$c_{i,i+1}=c_{i+1,i}=-1$ $(i=2,\dots,n-1)$.
\item
$c_{1\bar1}=c_{\bar1 1}=qs_1^{-1}-s_1q^{-1}$, 
\item
$c_{2,1}=c_{2,\bar 1}= -1$, $c_{12}=c_{\bar 12}=t_1$.
\end{itemize}
We have $d_i=-t_1t_3$ $(i=2,\dots,n)$, and $d_1=d_{\bar 1}=t_3$.

As an example we write the matrix for the case $\osp(2|8)$:
\begin{align}\label{osp cartan}
C_{\osp(2|8)}=\begin{pmatrix}
q+q^{-1} & -1 & 0 & 0 & 0 \\
-1 & q+q^{-1} & -1 & 0 & 0  \\
0 & -1 & q+q^{-1} & -1 & -1  \\
0 & 0 & s_1-s_1^{-1} & s_3-s_3^{-1} & qs_1^{-1}-q^{-1}s_1  \\
0& 0 & s_1-s_1^{-1} & qs_1^{-1}-q^{-1}s_1 & s_3-s_3^{-1}  
\end{pmatrix}.
\end{align} 

Note the symmetry $1\leftrightarrow \bar 1$. In this section, this symmetry is always preserved. Any formula has an analog where all colors are replaced  by that rule.

In the classification of \cite{FJMV}, our matrix is of type $(2,\dots,2,1;1)$D and it corresponds to the product of Fock spaces $\mc F_2^{\otimes n} \otimes \mc F_1 \otimes \mc F_1^{\on{CD}}$ of the  $\mc K_1$ algebra.

We again adopt the notation $Y_{i,\mu}=\bs i_\mu$, $Y_{i,\mu}^{-1}=\bs i^\mu$,  
$Y_{i,\mu}Y_{i,\nu}Y^{-1}_{i,\kappa}=\bs i_{\mu,\nu}^\kappa$, etc.

The roots have the form
\begin{align*}
A_{n,1}&=\bs n_{q,q^{-1}} (\bs{n-1})^1, \qquad 
A_{i,1}=\bs i_{q,q^{-1}} (\bs{i-1})^1(\bs{i+1})^1 
\qquad (i=3,\dots, n-1), \\ 
A_{2,1}&=\bs 2_{q,q^{-1}}\bs 3^1
\bs 1_{s_1}^{s_1^{-1}} \bs {\bar 1}_{s_1}^{s_1^{-1}},
\qquad\quad \   
A_{1,1}=\bs 1_{s_3}^{s_3^{-1}} 
\bs {\bar 1}_{qs_1^{-1}}^{q^{-1}s_1}\bs 2^1.
\end{align*}
The root $A_{\bar 1,1}$ is obtained from $A_{1,1}$  by the symmetry $1\leftrightarrow \bar 1$.

\medskip

In the case we consider, one expects to find $qq$-characters of degree zero which correspond to finite-dimensional modules of quantum affine algebra $U_q\osp(2|2n)$. We describe some of them now. 

We have the $qq$-character $\chi_{1,1}$ corresponding to the vector $(2n+1)$-dimensional representation of $\osp(2|2n)$. (We use the same notation $\chi_{1,1}$ as in the $\gl(2n|1)$ case, we hope it does not create confusion.)

It is described as follows.
\begin{align}\label{one box osp}
\chi_{1,1}=\bs n_1+\bs n^{q^2}(\bs{n-1})_q+(\bs{n-1})^{q^3}(\bs{n-2})_{q^2}+\dots +
\bs 3^{q^{n-1}}\bs 2_{q^{n-2}}+\bs 2^{q^{n}}\bs 1^{q^{n-1}s_1}_{q^{n-1}s_1^{-1}}\bs {\bar 1}^{q^{n-1}s_1}_{q^{n-1}s_1^{-1}}+ \\
+\bs 1^{q^{n-1}s_1}_{q^{n+1}s_1}\bs{\bar 1}_{q^{n-1}s_1^{-1}}^{q^{n+1}s_1^{-1}}
+\bs{ 1}_{q^{n-1}s_1^{-1}}^{q^{n+1}s_1^{-1}} \bs{\bar  1}^{q^{n-1}s_1}_{q^{n+1}s_1}
+\bs{  1}^{q^{n+1}s_1^{-1}}_{q^{n+1}s_1} \bs{\bar  1}^{q^{n+1}s_1^{-1}}_{q^{n+1}s_1}\bs2_{q^n}
+\bs2^{q^{n+2}}\bs 3_{q^{n+1}}+\dots + \bs n^{q^{2n}}.\notag
\end{align}
We also set $\chi_{1,\mu}=\tau_\mu(\chi_{1,1})$.

Note that $q^{2n}$ is the square of the central charge of $\mc K_1$ algebra acting in the product $\mc F_2^{\otimes n} \otimes \mc F_1 \otimes \mc F_1^{\on{CD}}$.

The $qq$-character $\chi_{1,1}$ corresponds to the representation of $\osp(2|2n)$ which is in the standard way denoted by a Young diagram with a single box. Then the $2n+2$ terms of the $qq$-character correspond to a basis of this representation labeled by the semi-standard Young tableaux with alphabet $\{n,n-1,\dots,1,0,\bar 0,1,\dots,\bar n\}$. Thus, one can view this representation as described on Figure \ref{one box osp pic}. 


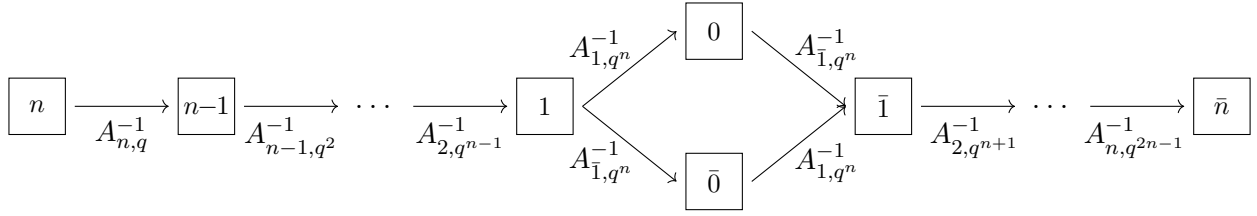
\begin{figure}
\begin{align*}
\begin{tikzpicture}
\node[rectangle,draw,minimum width = 0.75cm, 
    minimum height = 0.75cm] (r) at (-9,0) {};
 \node[rectangle,draw,minimum width = 0.75cm, 
    minimum height = 0.75cm] (r) at (-6.75,0) {};
    \node[rectangle,draw,minimum width = 0.75cm, 
    minimum height = 0.75cm] (r) at (-2.25,0) {};
    \node[rectangle,draw,minimum width = 0.75cm, 
    minimum height = 0.75cm] (r) at (0,1) {};
    \node[rectangle,draw,minimum width = 0.75cm, 
    minimum height = 0.75cm] (r) at (0,-1) {};
    \node[rectangle,draw,minimum width = 0.75cm, 
    minimum height = 0.75cm] (r) at (2.25,0) {};
    \node[rectangle,draw,minimum width = 0.75cm, 
    minimum height = 0.75cm] (r) at (6.75,0) {};
    \node at (-9,0) {\footnotesize$n$};
    \node at (-6.75,0){\footnotesize$n\hspace{-3pt}-\hspace{-3pt}1$};
    \node at (-4.5,0) {$\dots$};
    \node at (4.5,0) {$\dots$};
    \node at (-2.25,0) {\footnotesize$1$};
        \node at (0,1) {\footnotesize$0$};
        \node at (0,-1) {\footnotesize$\bar 0$};
        \node at (2.25,0) {\footnotesize$\bar 1$};
        \node at (6.75,0) {\footnotesize$\bar n$};
\draw[ ->] (-8.5,0)-- node[below]{{\small $A_{n,q}^{-1}$}}(-7.25,0);
\draw[ ->] (-6.25,0)-- node[below]{{\small $A_{n-1,q^2}^{-1}$}}(-5,0);
\draw[ ->] (-4,0) --node[below] {{\small $A_{2,q^{n-1}}^{-1}$}} (-2.75,0);
\node at (-1.5,0.75) {{\small $A_{1,q^n}^{-1}$}} ;
\draw[ ->] (-1.75,0)-- (-0.5,1);
\draw[ ->] (-1.75,0)-- (-0.5,-1);
\node at (-1.5,-0.75) {{\small $A_{\bar 1,q^n}^{-1}$}};
\draw[ ->] (0.5,1)-- (1.75,0);
\node at (1.5, 0.75) {\small $A^{-1}_{\bar 1,q^n}$};
\draw[ ->] (0.5,-1)-- (1.75,0);
\node at (1.5,-0.75) {\small $A^{-1}_{1,q^n}$};
\draw[ ->] (2.75,0)-- node[below]{\hspace{5pt}\small $A^{-1}_{2,q^{n+1}}$}(4,0);
\draw[ ->] (5,0)-- node[below]{\small $A^{-1}_{n,q^{2n-1}}\hspace{2pt}$}(6.25,0);
\end{tikzpicture}
\end{align*}
\caption{The $qq$-character corresponding to the $\osp(2|2n)$ vector representation.}\label{one box osp pic}
\end{figure}

\medskip 

It turns out that the representations 
corresponding to the one column Young diagram with $k$ boxes $(k=1,\dots,n-1)$ of type $\osp(2|2n)$ do not correspond to basic $qq$-characters.

For example, let $n=3$. One would expect to find a $qq$-character with the top term $\bs 2_1$ inside the product of $\chi_{1,q}\chi_{1,q^{-1}}$. However this product is not generic. 
Namely the term labeled by $2$ in $\chi_{1,q}$ is $\bs 3^{q^3}\bs 2_{q^2}$. 
The term labeled by $\bar 1$ in $\chi_{1,q^{-1}}$ is $\bs 1^{q^3s_1^{-1}}_{q^3s_1}\bs{\bar 1}^{q^3s_1^{-1}}_{q^3s_1}\bs 2_{q^2}$. These two terms share $\bs 2_{q^2}$ and therefore they are not mutually generic. In this paper we do not discuss non-basic $qq$-characters.

\medskip

Now we exhibit a $qq$-character $\xi_\mu$ of degree $(0,\dots,0,1,-1)$ which has $2^n$ terms.
We have $\xi_\mu=\tau_\mu(\xi_1)$ and 
$\xi_1$ is defined recursively as follows. 

To describe the recursion, we explicitly show the dependence on $n$ and write $\xi_\mu=\xi_\mu^{(n)}$.
We have  
$$
\xi_1^{(1)}=\bs 1_1 \bs{\bar 1}^{s_1^2}+\bs 1_{q^2s_1^2}\bs{\bar 1}^{q^2},
$$
$$
\xi_1^{(2)}=\bs 1_1 \bs{\bar 1}^{s_1^2}+\bs 1_{q^2s_1^2}\bs 2_{q s_1}\bs{\bar 1}^{q^2}+ 
\bs 1_{q^2} \bs 2^{q^3s_1}\bs{\bar 1}^{q^2s_1^2}+
\bs 1_{q^4s_1^2}\bs{\bar 1}^{q^4},
$$
\begin{align*}
\xi_1^{(3)}=\bs 1_1 \bs{\bar 1}^{s_1^2}+\bs 1_{q^2s_1^2}\bs 2_{q s_1}\bs{\bar 1}^{q^2}+ 
\bs 1_{q^2} \bs 2^{q^3s_1}\bs{\bar 1}^{q^2s_1^2}\bs 3_{q^2s_1}
+
\bs 1_{q^4s_1^2}\bs{\bar 1}^{q^4}\bs 3_{q^2s_1} \\
+\bs 1_{q^2}\bs 3^{q^4s_1} \bs{\bar 1}^{q^2s_1^2}+\bs 1_{q^4s_1^2}\bs 2_{q^3s_1}\bs 3^{q^4s_1}\bs{\bar 1}^{q^4}+ 
\bs{1}_{q^4} \bs 2^{q^5s_1}\bs{\bar 1}^{q^4s_1^2}+
\bs 1_{q^6s_1^2}\bs{\bar 1}^{q^6}.
\end{align*}
The picture of the case $n=3$ is shown in Figure \ref{char gl(2n|1)}. Note that though affine roots are different and the top monomial is different, the structures of the $\xi_{1}$ are the same in the cases of $\gl(2n|1)$ and $\osp(2|2n)$.

In general we set 
\begin{align}\label{init osp}
\xi_\mu^{(n)}=\xi_\mu^{(n),1}+\xi_\mu^{(n),2}, \qquad 
\xi_\mu^{(1),1}=\bs 1_\mu\bs{\bar 1}^{\mu s_1^2}, \quad \xi_\mu^{(1),2}=\bs 1_{\mu q^2 s_1^2}\bs{\bar 1}^{\mu q^2},
\end{align}
and use recursion \eqref{rec}.

\begin{lem}\label{osp eta lem}
The recursion  \eqref{init osp}, \eqref{rec} defines a basic $qq$-character $\xi_1^{(n)}$ with $2^n$ terms corresponding to the deformed Cartan matrix of $\osp(2|2n)$ type, cf. \eqref{osp cartan} of degree $(0,\dots,0,1,-1)$.
\end{lem}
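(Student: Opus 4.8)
The plan is to run the same induction on $n$ as in Lemma~\ref{eta lem}, after recording the one structural fact that makes the two cases almost identical: since the recursion \eqref{rec} is the same and the initial data \eqref{init osp} and \eqref{init gl(2n|1)} have the \emph{same} $\bs 1$-parts ($\bs 1_\mu$ and $\bs 1_{\mu q^2s_1^2}$), differing only in the $\bs{\bar 1}$-exponents, the monomials of $\xi^{(n)}$ in the $\osp(2|2n)$ case carry exactly the same $\bs 1,\bs 2,\dots,\bs n$ content as in the $\gl(2n|1)$ case and differ only in the exponents attached to $\bs{\bar 1}$. The count and the degree are then immediate and identical to Lemma~\ref{eta lem}: \eqref{rec} gives $|\xi^{(n)}|=2|\xi^{(n-1)}|=2^n$, and since color $n$ is bosonic, multiplication by $\bs n_{q^{n-1}s_1}$ or $\bs n^{q^{n+1}s_1}$ and the shift $\tau_{q^2}$ leave $\deg_1,\deg_{\bar 1}$ untouched, so every monomial keeps the degree $(0,\dots,0,1,-1)$ of the two initial ones.

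Granting this, most of the block decomposition is inherited for free. Color $\bar 1$ is trivial: every monomial has $\deg_{\bar 1}=-1$ and contains a single factor $\bs{\bar 1}^{\sigma}$, so it cannot be the (positive) top of a length-two fermionic block and is by itself a length-one elementary block of color $\bar 1$. For the colors $3\le i\le n$ the affine roots $A_i=\bs i_{q,q^{-1}}(\bs{i-1})^1(\bs{i+1})^1$ coincide with those of \eqref{gl(2n|1) cartan} and involve neither $\bs 1$ nor $\bs{\bar 1}$; since the $\bs 2,\dots,\bs n$ content of the monomials also coincides with the $\gl(2n|1)$ case, the entire treatment of these colors in Lemma~\ref{eta lem}, including the purely combinatorial identity $\xi^{(m),2}=\tau_{q^2}\bigl((\bs m)^{q^{m-1}s_1}\xi^{(m),1}\bigr)$ (a consequence of \eqref{rec}, valid for $m\ge 2$) and the resulting organization of the colors $i$ with $3\le i\le n$, transfers word for word.

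What genuinely uses the $\osp(2|2n)$ data is the pairing for the two fork colors $1$ and $2$, whose roots differ from the $\gl(2n|1)$ case only in their $\bs{\bar 1}$-factors: $A_{1,1}=\bs 1_{s_3}^{s_3^{-1}}\bs{\bar 1}_{qs_1^{-1}}^{q^{-1}s_1}\bs 2^1$, and $A_{2,1}=\bs 2_{q,q^{-1}}\bs 3^1\bs 1_{s_1}^{s_1^{-1}}\bs{\bar 1}_{s_1}^{s_1^{-1}}$ carries the extra factor $\bs{\bar 1}_{s_1}^{s_1^{-1}}$. Because the $\bs 1$- and $\bs 2$-content of the monomials is identical to the $\gl(2n|1)$ case, the pairs of monomials that must form the length-two blocks of color $1$ (resp.\ color $2$) are exactly the ones found in Lemma~\ref{eta lem}; what remains is to check that, with the same shift $\sigma$, the ratio of the $\bs{\bar 1}$-factors of each such pair equals the $\bs{\bar 1}$-part of $A_{1,\sigma}^{-1}$ (resp.\ the extra $\bs{\bar 1}$-factor of $A_{2,\sigma}^{-1}$). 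This is a short computation in the $\bar 1$-exponents, which in $\osp$ alternate between $s_1^2$- and $q$-shifts rather than the uniform $s_1$-shift of $\gl(2n|1)$; it is precisely arranged so that, for instance for the first color-$1$ pair one takes $\sigma=s_1q$ and gets $\bs{\bar 1}^{q^2}/\bs{\bar 1}^{s_1^2}=\bs{\bar 1}_{s_1^2}^{q^2}$, matching the $\bar 1$-part of $A_{1,s_1q}^{-1}$. I expect this $\bar 1$-exponent bookkeeping, propagated through \eqref{rec} by induction, to be the only real content of the proof; once it is checked in the base cases $n=1,2$ and the color-$2$ step at $n=3$ (where $A_2$ still meets $\bs 3$), the induction carries the decomposition to all $n$.
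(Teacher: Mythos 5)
Your proposal is correct and follows essentially the same route as the paper, which simply states that the proof is identical to that of Lemma \ref{eta lem}; you run the same induction and correctly isolate the only place where the $\osp(2|2n)$ data actually enters, namely the $\bs{\bar 1}$-exponent matching for the fork colors $1$ and $2$ (your sample check $\bs{\bar 1}_{s_1^2}^{q^2}$ against the $\bar 1$-part of $A_{1,qs_1}^{-1}$ is accurate). This makes explicit a verification the paper leaves implicit, but it is the same argument, not a different one.
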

\begin{proof}
The proof is the same as the proof of Lemma \ref{eta lem}.
\end{proof}

We solve the recursion explicitly.

\begin{lem}\label{xi osp explicit lem} We have
\begin{align*}
\xi^{(n)}= &\sum_{\nu\in\{0,1\}^n}\tilde\xi_\nu \\
&\tilde\xi_{\nu_n,\ldots,\nu_1}=\prod_{i=2}^n
Y_{i,q^{i+2\sum_{j=i+1}^n\nu_j+\nu_i-\nu_{i-1}}s_1}^{-\nu_i+\nu_{i-1}}
\cdot
Y_{1,q^{2\sum_{j=1}^n\nu_j}s_1^{2\nu_1}}
\cdot 
Y_{\bar 1,q^{2\sum_{j=1}^n\nu_j}s_1^{2-2\nu_1}}^{-1}\,.
\end{align*}

\end{lem}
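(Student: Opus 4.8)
The plan is to mirror the proof of Lemma \ref{xi explicit lem} for the $\gl(2n|1)$ case, since the recursion \eqref{rec} employed here is literally identical; only the initial conditions \eqref{init osp} differ from \eqref{init gl(2n|1)}. First I would observe that, because the color-$n$ factors $\bs n_{q^{n-1}s_1}$ and $\bs n^{q^{n+1}s_1}$ appearing in \eqref{rec} do not involve the colors $1,\bar 1$, the reduction of the two-part recursion for $\xi^{(n),1},\xi^{(n),2}$ to a single recursion on the monomials $\tilde\xi_\nu$ proceeds verbatim as in the $\gl$ case. Writing $\xi^{(n),1}$ and $\xi^{(n),2}$ as the sums over $\nu_n=0$ and $\nu_n=1$ respectively, one obtains exactly the recursion \eqref{xi-rec}, namely
$$
\tilde\xi_{\nu_n,\nu_{n-1},\nu'}=\tau_{q^{2\nu_n}}\bigl(\tilde\xi_{\nu_{n-1},\nu'}\bigr)Y_{n,q^{n+\nu_n-\nu_{n-1}}s_1}^{-\nu_n+\nu_{n-1}},
$$
now equipped with the $\osp$ initial data $\tilde\xi_0=\bs 1_1\bs{\bar 1}^{s_1^2}$, $\tilde\xi_1=\bs 1_{q^2s_1^2}\bs{\bar 1}^{q^2}$ coming from \eqref{init osp}.

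Then I would verify the claimed closed form by induction on $n$. For the base case $n=1$ the product over $i$ is empty and the formula reduces to $Y_{1,q^{2\nu_1}s_1^{2\nu_1}}Y_{\bar 1,q^{2\nu_1}s_1^{2-2\nu_1}}^{-1}$, which equals $\bs 1_1\bs{\bar 1}^{s_1^2}$ for $\nu_1=0$ and $\bs 1_{q^2s_1^2}\bs{\bar 1}^{q^2}$ for $\nu_1=1$, matching the initial data. For the inductive step I would apply $\tau_{q^{2\nu_n}}$ to the $(n-1)$-variable instance of the formula, which adds $2\nu_n$ to every $q$-exponent in the subscripts while leaving all $s_1$-powers untouched, and then multiply by $Y_{n,q^{n+\nu_n-\nu_{n-1}}s_1}^{-\nu_n+\nu_{n-1}}$. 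Matching against the $n$-variable formula is then a factor-by-factor check of exponents: the $i=n$ factor is exactly the last multiplication (the sum $\sum_{j=n+1}^n\nu_j$ being empty), while the $i=2,\dots,n-1$ and $i=1$ subscripts each acquire the $2\nu_n$ supplied by $\tau_{q^{2\nu_n}}$, accounting precisely for the extra $\nu_n$ term in the sums $\sum_{j=i+1}^n\nu_j$ and $\sum_{j=1}^n\nu_j$; the $\bar 1$ subscript likewise gains $2\nu_n$ in its $q$-power.

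The single point distinguishing this from the $\gl$ computation, which I would flag explicitly, is the behaviour of the $s_1$-power on the $\bar 1$ variable. In the $\gl$ case every $\tilde\xi_\nu$ carries a uniform factor $s_1$ on $\bar 1$, whereas here the initial data supplies the power $s_1^2$ when $\nu_1=0$ and $s_1^0$ when $\nu_1=1$, i.e.\ the power $s_1^{2-2\nu_1}$. Since neither $\tau_{q^{2\nu_n}}$ nor the color-$n$ multiplication alters any $s_1$-power or touches the $\bar 1$ variable, this power is fixed entirely by $\nu_1$ through the initial condition and propagates unchanged up the recursion; this is exactly the content of the $s_1^{2-2\nu_1}$ exponent in the statement. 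I expect no genuine obstacle beyond this careful bookkeeping of $q$- and $s_1$-exponents: once the $\bar 1$ power is tracked correctly, the remaining verification is word-for-word identical to that of Lemma \ref{xi explicit lem}.
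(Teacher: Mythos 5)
Your proposal is correct and follows essentially the same route as the paper: the paper's proof likewise observes that $\xi^{(n)}$ satisfies the identical recursion \eqref{xi-rec} and that the only change from Lemma \ref{xi explicit lem} is the initial condition $\tilde\xi_0=\bs{1}_1\bar{\bs{1}}^{s_1^2}$, $\tilde\xi_1=\bs{1}_{q^2s_1^2}\bar{\bs{1}}^{q^2}$. Your explicit tracking of the $s_1^{2-2\nu_1}$ exponent on the $\bar 1$ variable is just a spelled-out version of the same bookkeeping.
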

\begin{proof}
By Lemma \ref{osp eta lem}, the $qq$-character $\xi^{(n)}$ is given by the same recursion
\eqref{xi-rec}. Thus the lemma is proved in the same way as  Lemma \ref{xi explicit lem}, 
the only difference being the initial condition
$\tilde\xi_0=\bs{1}_1\bar{\bs{1}}^{s_1^2}$, $\tilde\xi_1=\bs{1}_{q^2s_1^2}\bar{\bs{1}}^{q^2}$.  
\end{proof}

By the symmetry $1 \leftrightarrow \bar 1$, using  $\xi_1$ we also obtain a basic $qq$-character with $2^n$ terms of degree $(0,\dots,0,1,-1)$ which we denote $\eta_1$.

\subsection{The case of $\gl(n|m)$}\label{gl(n|m) sec}
We discuss the case of $\gl(n|m)$ for arbitrary $m,n$ and the standard parity.

The Dynkin diagram is shown on Figure \ref{glnm fig}. 
Note that for $m=1$ the present choice of the Dynkin diagram is different from the symmetric one 
used in Subsection \ref{gl(2n|1) sec}.
\begin{figure}
\begin{align*}
\begin{tikzpicture}
\dynkin[root radius=.2cm, edge length=1.3cm, 
labels={n-1,1,0, \overline{1}, \overline{m-1}}]{A}{o.oto.o}
\end{tikzpicture}
\end{align*}
\caption{The $\gl(n|m)$ Dynkin diagram and labeling.}\label{glnm fig}
\end{figure}
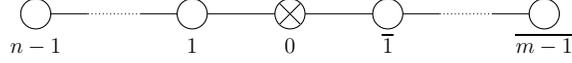

Let again, $s_1,s_2=q,s_3$ satisfy $s_1s_2s_3=1$.
We study the deformed $(n+m-1)\times (n+m-1)$ Cartan matrix whose non-trivial entries are given by:

\begin{itemize}
\item
$c_{ii}=q+q^{-1}$ $(i=1,\dots,n-1)$, $c_{00}=t_3$, $c_{jj}=s_1+s_1^{-1}$ $(j=\bar 1,\dots,\overline{m-1})$,

\item
$c_{01}=t_1$, $c_{0,\bar1}=t_2$,
\item
$c_{i,i+1}=-1$  $(i=1,\dots,n-1)$, $c_{i+1,i}=-1$  $(i=0,\dots,n-1)$, 
\item
$c_{ \overline{j+1},\bar{j}}= -1$  $(j=0,\dots,m-1)$, $c_{ \bar{j},\overline{j+1}}= -1$  $(j=1,\dots,m-1)$.
\end{itemize}

We have $d_i=-t_1t_3$ $(i= 1,\dots, n-1)$, $d_0=t_3$, $d_j=-t_2t_3$ $(j=\bar 1,\dots,\overline{m-1})$.

As an example we write the matrix for the case $\gl(3|4)$:
\begin{align}\label{gl(3,4) cartan}
C^s_{\gl(3|4)}=\begin{pmatrix}
q+q^{-1} & -1 & 0 & 0 & 0 & 0 \\
-1 & q+q^{-1} & -1 & 0 & 0 & 0 \\
0 & s_1-s_1^{-1} & s_3-s_3^{-1} & q-q^{-1} & 0 & 0  \\
0 & 0 & -1 & s_1+s_1^{-1} & -1 & 0 \\
0& 0 & 0 & -1 & s_1+s_1^{-1} & -1 \\
0& 0& 0 & 0 & -1 & s_1+s_1^{-1}
\end{pmatrix}.
\end{align}

In the classification of \cite{FJMV}, our matrix is of type $(2,\dots,2,1,\dots,1)$A and it corresponds to the product of Fock spaces $\mc F_2^{\otimes n} \otimes \mc F_1^{\otimes m}$ of the quantum toroidal $\gl_1$ algebra.

As always, we adopt the notation $Y_{i,\mu}=\bs i_\mu$, $Y_{i,\mu}^{-1}=\bs i^\mu$,  
$Y_{i,\mu}Y_{i,\nu}Y^{-1}_{i,\kappa}=\bs i_{\mu,\nu}^\kappa$, etc.

Then the roots have the form
\begin{align*}
A_{n-1,1}&=(\bs{n-1})_{q,q^{-1}} (\bs{n-2})^1, \qquad A_{i,1}=\bs i_{q,q^{-1}} (\bs{i-1})^1(\bs{i+1})^1 \qquad (i=2,\dots, n-2), \\ A_{1,1}&=\bs 1_{q,q^{-1}}\bs 2^1\bs 0_{s_1}^{s_1^{-1}},\qquad\qquad \qquad   
\textcolor{red}{A_{0,1}}  
=\bs 0_{s_3}^{s_3^{-1}} \bs 1_1\bs {\bar 1}_1, \qquad \qquad \qquad 
A_{\bar 1,1}=\bs{\bar  1}_{s_1,s_1^{-1}}\bs{\bar  2}^1\bs 0_{q}^{q^{-1}},\\
A_{\overline{m-1},1}&=(\bs{\overline {m-1}})_{s_1,s_1^{-1}} (\bs{\overline{m-2}})^1, \quad A_{\bar j,1}=\bs {\bar j}_{s_1,s_1^{-1}} (\bs{\overline{j-1}})^1(\bs{\overline{j+1}})^1 \qquad (j=2,\dots, m-2).
\end{align*}

\medskip

In the case we consider, one expects to find $qq$-characters of degree zero which correspond to finite-dimensional modules of quantum affine algebra $U_q\gl(n|m)$. 

The polynomial $\gl(m|n)$ modules are labeled by 
$(m|n)$ hook partitions. 
The corresponding $qq$-characters in arbitrary parity are described similarly to the corresponding $q$-characters, see Theorem 3.4 of \cite{LM}. We give the construction.

First, we have the $qq$-character $\chi_{1,1}$ corresponding to the vector $(n+m)$-dimensional representation of $\gl(n|m)$. It is described as follows.
\begin{align}
\chi_{1,1}=&(\bs{n-1})_1+(\bs {n-1})^{q^2}(\bs{n-2})_q+(\bs{n-2})^{q^3}(\bs{n-3})_{q^2}+\dots 
\label{chi-glnm}\\
&+
\bs 2^{q^{n-1}}\bs 1_{q^{n-2}}
+\bs 1^{q^n}
\bs 0^{q^{n-1}s_1}_{q^{n-1}s_1^{-1}}
+\bs 0^{q^{n-1}s_1}_{q^{n+1}s_1}\bs{\bar 1}_{q^n}+ \bs{\bar 1}^{q^{n}s_1^2}\bs{\bar 2}_{q^ns_1}+\dots + (\bs{\overline{m-1}})^{q^ns_1^m}.
\nn
\end{align}
We also set $\chi_{1,\mu}=\tau_\mu(\chi_{1,1})$.

Note that $q^{n}s_1^m$ is the central charge of the product $\mc F_2^{\otimes n} \otimes \mc F_1^{\otimes m}$.

The $qq$-character $\chi_{1,1}$ corresponds to the representation of $\gl(n|m)$ which is in the standard way denoted by a Young diagram with a single box. Then the $n+m$ terms of the $qq$-character correspond to a basis of this representation labeled by the semi-standard Young tableaux with the alphabet $\{n,n-1,\dots, 1,\bar 1, \dots,\bar m\}$. One can view this representation as described on Figure \ref{one box gl(n|m) pic}. 

We choose  
an order of the alphabet $\{n,n-1,\dots,1,\bar 1,\dots,\bar n\}$ along the arrows on Figure \ref{one box gl(n|m) pic}: $n\prec n-1\prec n-2\prec \dots\prec 1\prec \bar 1\prec \dots \prec\bar m$.   

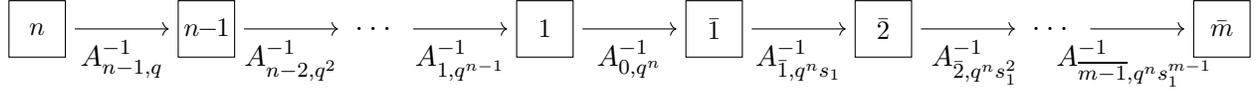
\begin{figure}
\begin{align*}
\begin{tikzpicture}
\node[rectangle,draw,minimum width = 0.75cm, 
    minimum height = 0.75cm] (r) at (-9,0) {};
 \node[rectangle,draw,minimum width = 0.75cm, 
    minimum height = 0.75cm] (r) at (-6.75,0) {};
    \node[rectangle,draw,minimum width = 0.75cm, 
    minimum height = 0.75cm] (r) at (-2.25,0) {};
    \node[rectangle,draw,minimum width = 0.75cm, 
    minimum height = 0.75cm] (r) at (0,0) {};
    \node[rectangle,draw,minimum width = 0.75cm, 
    minimum height = 0.75cm] (r) at (2.25,0) {};
    \node[rectangle,draw,minimum width = 0.75cm, 
    minimum height = 0.75cm] (r) at (6.75,0) {};
    \node at (-9,0) {\footnotesize$n$};
    \node at (-6.75,0){\footnotesize$n\hspace{-3pt}-\hspace{-3pt}1$};
    \node at (-4.5,0) {$\dots$};
    \node at (4.5,0) {$\dots$};
    \node at (-2.25,0) {\footnotesize$1$};
        \node at (0,0) {\footnotesize$\bar 1$};
        \node at (2.25,0) {\footnotesize$\bar 2$};
        \node at (6.75,0) {\footnotesize$\bar m$};
\draw[ ->] (-8.5,0)-- node[below]{{\small $A_{n-1,q}^{-1}$}}(-7.25,0);
\draw[ ->] (-6.25,0)-- node[below]{{\small $A_{n-2,q^2}^{-1}$}}(-5,0);
\draw[ ->] (-4,0) -- node[below]{{\small $A_{1,q^{n-1}}^{-1}$}} (-2.75,0);
\draw[ ->] (-1.75,0)-- node[below]{{\small $A_{0,q^n}^{-1}$}}(-0.5,0);
\draw[ ->] (0.5,0)-- node[below]{\small $A^{-1}_{\bar 1,q^n s_1}$}(1.75,0);
\draw[ ->] (2.75,0)-- node[below]{\hspace{5pt}\small $A^{-1}_{\bar 2,q^{n}s_1^2}$}(4,0);
\draw[ ->] (5,0)-- node[below]{\small $A^{-1}_{\overline{ m-1},q^ns_1^{m-1}}\hspace{2pt}$}(6.25,0);
\end{tikzpicture}
\end{align*}
\caption{The $qq$-character corresponding to the $\gl(n|m)$ vector representation.}\label{one box gl(n|m) pic}
\end{figure}

Next we describe the $qq$-character $\chi_{\la,1}$ corresponding to the polynomial representations of $\gl(n|m)$. Let $\la$ be a hook partition, $\la=(\la_1,\dots,\la_k)$ where $\la_i$ are positive integers such that $\la_i\geq \la_{i+1}$ and $\la_{n+1}<m+1$.
The Young diagram corresponding to $\la$ is represented by boxes centered at $(i,j)$ where $i=1,\dots,k,$ $j=1,\dots,\la_i$. 

The semi-standard Young tableau $T$ of shape $\la$ is a filling of the boxes of $\la$ with elements of the alphabet $\{n,n-1,\dots,1,\bar 1,\dots,\bar m\}$ in non-decreasing order from up down and from the left to right. 
The fermionic fillings $\bar 1,\dots, \bar m $ are allowed to repeat in the same column but not in the same row. 
The bosonic fillings $1,\dots, n$ can repeat in the same row but not in the same column. 
More formally, a semi-standard Young tableau is a map $T$ from the set of boxes $(i,j)$ to the  alphabet 
$\{n,n-1,\dots,1,\bar 1,\dots,\bar m\}$ such that 
$T(i+1,j)\succeq T(i,j)$
with equality allowed only if 
$T(i,j)\succeq \bar 1$, 
and  such that 
$T(i,j+1)\succeq T(i,j)$
with equality allowed only if 
$T(i,j)\preceq 1$.  
Let $T(\la)$ denote the set of all semi-standard Young tableaux of shape $\la$. Now we describe the monomial corresponding to each semi-standard Young tableau $T\in T(\la)$.

We find monomials of $\chi_{\la,1}$ inside of the product $\prod_{i=1}^k\prod_{j=1}^{\la_i}\chi_{1,q^{-2i}s_1^{-2j}}$. Each factor contributes by the monomial corresponding to its filling. 
More precisely, let $M_{i,1}$ be the monomial of $\chi_{1,1}$ corresponding to the filling $i\in\{n,n-1,\dots,1,\bar 1,\dots, \bar m\}$ and $M_{i,\mu}=\tau_\mu(M_{i,1})$.  Then the monomial $M_T$ in $\chi_{k,1}$ corresponding to the semi-standard Young tableau $T$ is
$$
M_T=\prod_{i=1}^k\prod_{j=1}^{\la_i} M_{T((i,j)),q^{-2i}s_1^{-2j}}.
$$

The top term of  $\chi_{k,1}$ corresponds to the minimal filling when the top row contains $n$, the second row $n-1$, the $n$-th row $1$, and then the remainder of the first column contains $\bar 1$, the remainder of the second column contains $\bar 2$, and the remainder of the
$m$-th column contains $\bar m$. 

We have the following lemma. 
\begin{lem}\label{gl(m|n) hook part lem}
For any hook partition $\la$, the sum
$$
\chi_{\la,1}=\sum_{T\in T(\la)} M_T,
$$
is a degree zero basic $qq$-character corresponding to the deformed Cartan matrix of $\gl(n|m)$ type, cf. \eqref{gl(3,4) cartan}.
\end{lem}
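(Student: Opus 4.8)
The plan is to verify, for each color $i$, that $\chi_{\la,1}$ decomposes into a sum of products of mutually generic elementary blocks of color $i$, following the same strategy that worked for Lemma \ref{gl 2n one column lem}. The key reduction is that the degree-zero and basic-$qq$-character properties can both be checked color by color, and that the monomials are pinned down explicitly by the tableau formula $M_T=\prod_{i=1}^k\prod_{j=1}^{\la_i}M_{T((i,j)),q^{-2i}s_1^{-2j}}$. Since each $M_{\cdot,\mu}$ is a shift of a single-box monomial from \eqref{chi-glnm}, the degree in each fermionic color is dictated purely by whether the filling equals $0$, and I expect the semistandard condition (no repeats in the relevant direction for bosonic fillings, repeats allowed only at $0$) to force the total fermionic degree to be zero.

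First I would fix a color $i$ and group the tableaux into families according to a local move: given $T$, I toggle the filling in a single box $(a,b)$ between a value $v$ and its successor $v^+$ across the ``wall'' at color $i$, exactly as in the one-column case where $M_{i_1,\dots,i_s,\dots,i_k}+M_{i_1,\dots,i_s^+,\dots,i_k}$ formed a length-two block. The content shift $q^{-2a}s_1^{-2b}$ attached to box $(a,b)$ determines the spectral parameter $\mu$ of the corresponding affine root $A_{i,\mu}^{-1}$, and I must check that changing $T((a,b))$ from $v$ to $v^+$ multiplies $M_T$ by precisely $A_{i,\mu}^{-1}$ for the appropriate $\mu$. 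This is the local consistency that makes each pair a genuine elementary block; it follows from the single-box data, namely that $M_{v^+,1}=M_{v,1}\cdot A_{i',1}^{-1}$ for the color $i'$ that separates $v$ from $v^+$ on Figure \ref{one box gl(n|m) pic}, combined with the $\tau$-covariance $M_{i,\mu}=\tau_\mu(M_{i,1})$.

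The main obstacle is bookkeeping the semistandard constraints so that the toggles organize into \emph{mutually generic} products of blocks rather than overlapping. I would argue that for a fixed color $i$, the boxes whose filling can be toggled across the $i$-wall occupy non-adjacent positions (because the semistandard ordering along rows and columns prevents two such toggleable boxes from sharing a generator $Y_{i,\sigma}$), so the associated length-two blocks act on disjoint sets of variables and are therefore mutually generic; the remaining fillings that cannot be toggled contribute length-one blocks. Concretely, for the bosonic colors $i\in\{1,\dots,n-1,\bar1,\dots,\overline{m-1}\}$ the toggleable boxes come from the standard crystal structure of a single column (resp.\ row), and the hook shape guarantees these do not collide; for the fermionic color $0$, the toggle is between the filling $1$ and $\bar1$, and the hook condition $\la_{n+1}<m+1$ is exactly what ensures the $0$-blocks remain generic.

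Finally I would confirm that $\chi_{\la,1}$ has degree zero: the total fermionic degree in color $0$ counts, with sign, the transitions through $0$, and summed over a full elementary-block decomposition these cancel, so each monomial $M_T$ has $\deg_0=0$; the same holds trivially for bosonic colors since their $\deg_i$ vanishes by definition. I expect no genuinely new phenomenon beyond Lemma \ref{gl 2n one column lem}: the hook-partition generalization amounts to running the column argument and the row argument simultaneously and checking they do not interfere, the interference being controlled precisely by the hook condition. The induction, if needed, would be on the number of boxes of $\la$, peeling off a corner box and invoking the product structure $\prod_{i,j}\chi_{1,q^{-2i}s_1^{-2j}}$ together with the single-box character from \eqref{chi-glnm}.
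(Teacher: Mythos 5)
Your proposal follows essentially the same route as the paper: for each color $i$, the color-$i$ variables occur only in the single-box monomials on either side of the $i$-wall, and a box filling is toggleable to its successor exactly when the result is again semistandard, producing length-two elementary blocks (with distinct spectral parameters from distinct box contents ensuring mutual genericity) and length-one blocks otherwise. One small slip: in the standard parity of $\gl(n|m)$ the alphabet contains no filling $0$, and $\deg_0=0$ holds simply because every single-box monomial in \eqref{chi-glnm} already has degree zero in color $0$ — but this does not affect the argument.
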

\begin{proof}
For $i\in \{n,\dots,1,\bar 1,\dots,\bar m\}$, let 
$i^-$ be the preceding element.

We need to check the decomposition of $\chi_{k,1}$ into elementary blocks of color $i$ where $i\in\{n-1,\dots,1,0,\bar 1,\dots,\overline{m-1}\}$.
For example, let $i\in\{n-1,\dots,1\}$.  The variables $\bs i_a$ or $\bs i^a$ are present only in monomials of the form 
$M_{i^-,b}$
and $M_ {i,b}$. 
If for some box $(l,s)$, the filling is $i^-$: $T((l,s))=i^-$.
The the $i_a$ from the corresponding monomial is not canceled if and only if 
$T'$ defined by 
$$
T'=T, \quad {\rm{except}} \qquad T(l,s)=i
$$
is also a semi-standard Young tableau. In that case $M_T+M_T'$ 
has a factor which is an elementary block of color $i$ of length $2$. 
Thus $\chi_\la$ decomposes into products of elementary blocks
of color $i$ of length $2$ and length $1$.
\end{proof}

We consider the special hook partition $\la^{(0)}=(m,\dots,m)$, where
$m$ is repeated $n$ times. Thus, the Young diagram of $\la^{(0)}$ is the rectangle of size $n\times m$. 

It is known that the corresponding polynomial representation of $\gl(m|n)$ is a Kac module generated from the trivial representation of the even subalgebra $\gl(n)\times \gl(m)$. In particular it has dimension $2^{nm}$. 
The next lemma is the reflection of the fact that the structure of such a
Kac module does not depend on the weight component corresponding to the fermionic root of color $m$. 

\begin{lem}
The $qq$-character $\chi_{\la^{(0)},1}$ is divisible by 
$\bs 0^{q^{-n-1}s_1^{-1}}$. 
In other words the Laurent polynomial 
$\xi_1=\tau_{q^{n+1}s_1^{2m+1}}(\bs 0_{q^{-n-1}s_1^{-1}} \chi_{\la^{(0)},1})$ 
is a basic $qq$-character of degree $(0,\dots,0,1,0,\dots,0)$ corresponding to the deformed Cartan matrix of $\gl(n|m)$ type, cf. \eqref{gl(3,4) cartan}. The $qq$-character 
$\xi_1$ is a sum of $2^{mn}$ monomials with the top term $\bs 0_1$.
\end{lem}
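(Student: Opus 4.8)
The plan is to work throughout with the tableau description from Lemma \ref{gl(m|n) hook part lem}, namely $\chi_{\la^{(0)},1}=\sum_{T\in T(\la^{(0)})}M_T$ with $M_T=\prod_{i=1}^n\prod_{j=1}^m M_{T(i,j),q^{-2i}s_1^{-2j}}$, the sum running over semistandard tableaux of the $n\times m$ rectangle. The argument splits into a combinatorial \emph{corner lemma}, the divisibility itself (with its consequences for degree, tameness and the term count), the telescoping computation of the top monomial, and the reduction of the ``basic'' property to tameness. The corner lemma states that in every $T\in T(\la^{(0)})$ one has $T(n,1)\in\{1,\bar 1\}$. I would prove it by a pigeonhole count: if $T(n,1)$ is fermionic, the whole bottom row consists of $m$ strictly increasing fermionic letters $\succeq T(n,1)$, and since there are only the $m$ fermionic letters $\bar1,\dots,\bar m$ this forces $T(n,1)=\bar1$; if $T(n,1)$ is bosonic, then every entry of column $1$ is $\preceq T(n,1)$ hence bosonic, giving $n$ strictly increasing bosonic letters, so $T(n,1)=1$.

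The divisibility is the combinatorial heart. Color-$0$ variables occur in $M_T$ only through boxes filled with $1$ or $\bar1$, as these are the only fillings in $\chi_{1,1}$ carrying color $0$, cf. \eqref{chi-glnm}. Reading off from $M_{1,\mu}$ and $M_{\bar1,\mu}$ the $q$- and $s_1$-exponents of their color-$0$ factors and solving, one checks that the distinguished variable $\bs0^{q^{-n-1}s_1^{-1}}$ is produced by exactly one box, the corner $(n,1)$, and only with exponent $-1$ (no box produces it with exponent $+1$). By the corner lemma that box is always filled by $1$ or $\bar1$, and in both cases its negative color-$0$ factor is precisely $\bs0^{q^{-n-1}s_1^{-1}}$. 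Hence every $M_T$ contains $\bs0^{q^{-n-1}s_1^{-1}}$ to the exact power $-1$, so $\chi_{\la^{(0)},1}$ is divisible by $\bs0^{q^{-n-1}s_1^{-1}}$. Multiplying by $\bs0_{q^{-n-1}s_1^{-1}}$ raises the power of this single variable from $-1$ to $0$ and leaves all other variables untouched; since $\chi_{\la^{(0)},1}$ is tame the quotient stays tame, the degree in the unique fermionic color $0$ goes from $0$ to $1$, and the shift $\tau_{q^{n+1}s_1^{2m+1}}$ preserves both. This yields the degree $(0,\dots,0,1,0,\dots,0)$ claim, while the count $2^{mn}$ follows because multiplication and shift are bijections on monomials and $|T(\la^{(0)})|$ equals the Kac-module dimension $2^{mn}$ recalled above.

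For the top term and the basic property, note first that the minimal tableau of the rectangle is the purely bosonic filling $T(i,j)=n{+}1{-}i$. Within a fixed column the shifts $q^{-2i}s_1^{-2j}$ make $\prod_{i=1}^n M_{n+1-i,q^{-2i}s_1^{-2j}}$ telescope, successively canceling colors $n{-}1,\dots,1$, down to $\tau_{s_1^{-2j}}\bigl(\bs0^{q^{-n-1}s_1}_{q^{-n-1}s_1^{-1}}\bigr)$; telescoping the resulting product over $j=1,\dots,m$ then leaves $M_{\mathrm{top}}=\bs0_{q^{-n-1}s_1^{-2m-1}}^{q^{-n-1}s_1^{-1}}$, whence $\tau_{q^{n+1}s_1^{2m+1}}\bigl(\bs0_{q^{-n-1}s_1^{-1}}M_{\mathrm{top}}\bigr)=\bs0_1$, as claimed. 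For the basic property I would use that multiplication by a fixed monomial $m_0$ and a shift $\tau_\mu$ carry an elementary block $m(1+A^{-1}_{i,q}+\cdots)$ of color $i$ to $(m_0 m)(1+A^{-1}_{i,q}+\cdots)$, an elementary block of the same color with the same string of affine roots; applying $\rho_i$ only multiplies the $\mathcal Y_1$-prefactor by the monomial $\rho_i(m_0)$. Hence these operations send the color-$i$ decomposition of the basic $qq$-character $\chi_{\la^{(0)},1}$ to a decomposition of $\xi_1$ for every $i$, the sole condition to verify being genericity, i.e. exactly the tameness already established.

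I expect the main obstacle to be the exponent bookkeeping in the divisibility step: one must certify that across all $nm$ boxes and both fillings $1,\bar1$ the variable $\bs0^{q^{-n-1}s_1^{-1}}$ is produced by the corner box alone and never with a canceling $+1$ power, and then couple this with the corner lemma. By contrast, the telescoping of $M_{\mathrm{top}}$ is routine once the cancellation pattern is written down, and the basic-character claim collapses cleanly to the tameness proved along the way.
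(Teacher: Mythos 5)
Your proposal is correct and follows essentially the same route as the paper: the key step in both is the observation that $T(n,1)\in\{1,\bar 1\}$ for every semi-standard tableau of the rectangle, combined with the check that $\bs 0^{q^{-n-1}s_1^{-1}}$ can only be produced by the corner box and never with a cancelling positive power. You supply more detail than the paper does (the pigeonhole proof of the corner observation, the telescoping computation of the top term, and the preservation of the elementary-block decomposition), but the underlying argument is the same.
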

\begin{proof}
The variables $\bs 0_a$ or $\bs 0^a$ are present only in monomials of the form $M_{1,b}$ and $M_{\bar 1,b}$. The key observation is that for any $T\in T(\la^{(0)})$, the filling of the bottom left corner is either $1$ or $\bar 1$: $T((n,1))\in\{1,\bar 1\}$. Thus $M_T$ has either a factor 
$M_{1,q^{-2n}s_1^{-2}}$ or a factor 
$M_{\bar 1,q^{-2n}s_1^{-2}}$. 
Both these monomials contain 
$\bs 0^{q^{-n-1}s_1^{-1}}$. 
There is no cancellation since  
$M_{1,q^{-2n}}$
and 
$M_{\bar 1,q^{-2n-2}s_1^{-2}}$
are never present.
\end{proof}

We also set $\eta_\mu=\bs 0^{\mu}$. So $\eta_\mu$ is a basic $qq$-character of degree $(0,\dots,0,-1,0,\dots,0)$. 

We have $\xi_{q^{-n-1}s_1^{-2m+1}}\eta_{q^{-n-1}s_1}=\chi_{\la^{(0)},1}$.

In other parities one expects to find two basic $qq$-characters such that their product has degree zero and $2^{mn}$ terms. One example of that is discussed in Section \ref{gl(2n|1) sec}.

\section{Extended deformed $W$-algebras}\label{sec:Wex}
In this section we discuss the free field realization of $qq$-characters discussed in the previous section. 
We give a set of commutation relations for them and show that they constitute an extension of the standard deformed
$W$-algebras associated with
$\gl(2n|1)$ or $\mathfrak{osp}(2|2n)$. 

\subsection{Free fields}\label{subsec:boson}
First we prepare some generalities concerning free fields. 
For the sake of concreteness, we focus our attention to the case where 
the Cartan matrix $C=\bigl(C_{i,j}\bigr)_{i,j\in I}$ depends on two parameters $s_1,s_2$,  
and the symmetrized matrix $B=DC$ is invariant under the simultaneous change $s_i\to s_i^{-1}$, $i=1,2$. 
We retain the notation $s_3=(s_1s_2)^{-1}$, $q=s_2$.
For a Laurent polynomial $f=f(s_1,s_2)$ and $r\in \Z\backslash\{0\}$, we write $f^{[r]}(s_1,s_2)=f\bigr(s_1^r,s_2^r)$. 

From now on, we specialize $s_1,s_2$ to non-zero complex numbers and use the same letters to denote them,  
assuming that $s_1^as_2^b=1$, $a,b\in\Z$, holds only if $a=b=0$.
We set
\begin{align*}
s_1=s_3^{-\gamma}\,,\quad s_2=s_3^{-(1-\gamma)}\,,\quad \gamma\not\in\Q\,. 
\end{align*}
We assume also that the limit
\begin{align*}
K=-\lim_{s_3\to 1} \bigl(t_3^{-2}B\bigr)\bigr|_{s_1=s_3^{-\gamma}, s_2=s_3^{-(1-\gamma)}}\,
\end{align*}
exists and is invertible. 

Consider a Heisenberg algebra with generators
$\{Q_{\sss_i}, \sss_{i,r}\mid i\in I, r\in \Z\}$, satisfying the commutation relations
\begin{align*}
&[\sss_{i,r},\sss_{j,r'}]=-\frac{1}{r}\delta_{r+r',0}\bigl(t_3^{[r]}\bigr)^{-2}\bigl(B^{[r]}\bigr)_{i,j}\,,\quad
r,r'\neq 0\,,\\
&[\sss_{i,0},Q_{\sss_j}]=K_{i,j}\,.
\end{align*}
All other commutators are set to $0$. 
Define further
\begin{align*}
&\ssx_{i,r}=-\bigl(t_3^{[r]}\bigr)^2\sum_{k\in I} \bigl({B^{[r]}}^{-1}\bigr)_{i,k}\sss_{k,r}\,,\quad r\neq 0\,,\\
&\ssx_{i,0}=\sum_{k\in I} \bigl(K^{-1}\bigr)_{i,k}\sss_{k,0}\,,
\quad
Q_{\ssx_i}=\sum_{k\in I} \bigl(K^{-1}\bigr)_{i,k}Q_{\sss_k}\,.
\end{align*}
We shall be concerned with the following vertex operators for $i\in I$:  
\begin{align*}
&S_i(z)=e^{Q_{\sss_i}}z^{\sss_{i,0}}:\exp\Bigl(\sum_{r\neq 0}\sss_{i,r}z^{-r}\Bigr):\,,\\
&X_i(z)=e^{Q_{\ssx_i}}z^{\ssx_{i,0}}:\exp\Bigl(\sum_{r\neq 0}\ssx_{i,r}z^{-r}\Bigr):\,,\\
&A_i(z)=:\frac{S_i(s_3^{-1}z)}{S_i(s_3z)}:\,,\\
&Y_i(z)=\begin{cases}
\ds{:\frac{X_i(s_1z)}{X_i(s_1^{-1}z)}:}& \text{for $i$ bosonic},\\
\ds{:\frac{1}{X_i(z)}:}& \text{for $i$ fermionic}\,.\\
\end{cases}
\end{align*}
Here and after we use the standard normal ordering symbol  $:~:$, under which  
$Q_{\sss_i}$, $\sss_{i,-r}$ are placed to the left and $\sss_{i,0}$, $\sss_{i,r}$ to the right, for all $i\in I$ and $r>0$.
We call $S_i(z)$ the screening currents, $A_i(z)$ the root currents and $Y_i(z)$ the $Y$ currents.
We remark that, while the root currents and the bosonic Y currents do not depend on the $Q_{\sss_i}$'s,
the fermionic Y currents do.  

Quite generally, a product of two vertex operators $V_i(z)$, $i=1,2$,  has the form
\begin{align*}
V_1(z)V_2(w)=z^\alpha \varphi_{V_1,V_2}(w/z):V_1(z)V_2(w):\,,
\end{align*}
where $\alpha\in\C$ and $\varphi_{V_1,V_2}(w/z)$ is a formal power series in $w/z$. We 
use the symbol 
\begin{align*}
\cont{V_1(z)}{V_2(w)}=z^\alpha \varphi_{V_1,V_2}(w/z)
\end{align*}
and call it the contraction of $V_1(z)$ and $V_2(w)$. Clearly we have
\begin{align*}
&\cont{V_1(z)}{\bigl(:V_2(w)V_3(w):\bigr)}=\cont{V_1(z)}{V_2(w)}\times \cont{V_1(z)}{V_3(w)}\,,\\
&\cont{\bigl(:V_2(w)V_3(w):\bigr)}{V_1(z)}=\cont{V_2(w)}{V_1(z)}\times \cont{V_3(w)}{V_1(z)}\,,
\end{align*}
for vertex operators $V_i(z)$, $i=1,2,3$.

The contractions of $S_i(z)$ and $Y_j(w)$ converge to rational functions:
\begin{align*}
&\cont{S_i(z)}{Y_i(w)}=\frac{z-s_1w}{z-s_1^{-1}w}=\cont{Y_i(w)}{S_i(z)}\quad \text{for $i$ bosonic}\,,\\
&\cont{S_i(z)}{Y_i(w)}=\frac{1}{z-w}=-\cont{Y_i(w)}{S_i(z)}\quad \text{for $i$ fermionic}\,,\\
&\cont{S_i(z)}{Y_j(w)}=1=\cont{Y_j(w)}{S_i(z)}\,,\quad i\neq j\,.
\end{align*}
These formulas should be understood as appropriate expansions in powers of $w/z$ or $z/w$. 

We note in particular that
\begin{align*}
&\cont{A_i(z)}{Y_j(w)}=1=\cont{Y_j(w)}{A_i(z)}\,,\quad i\neq j\,,
\end{align*}
and  that the contractions
\begin{align*}
\cont{A_i(z)}{Y_i(w)}\,,\quad\cont{Y_i(w)}{A_i(z)}\,,\quad \cont{A_i(z)}{A_j(w)} 
\end{align*}
are all rational, the first two being the same rational function. 

In contrast, the contractions of the screening currents
\begin{align*}
\cont{S_i(z)}{S_j(w)}=z^{K_{i,j}}\exp\Bigl(-\sum_{r>0}\frac{1}{r}\frac{B^{[r]}_{i,j}}{(t_3^{[r]})^2}\frac{w^r}{z^r}\Bigr)
\end{align*}
are non-rational because of the denominator $t^{[r]}_3=s_3^r-s_3^{-r}$ (note that $B^{[r]}_{i,j}$ is divisible by $t_3^{[r]}$).
If $|s_3|>1$, 
for example, then these contractions
are meromorphic functions written in terms of infinite products of the form $(z;s_3^{-2})_\infty=\prod_{j=0}^\infty(1-s_3^{-2j}z)$. 
Similarly the contractions between $Y_i(z)$'s
are non-rational (and are slightly more complicated than those of $S_i(z)$'s). 

For a generic monomial  $m=\prod_{i\in I}\prod_{a\in\C}Y^{n_{i,a}}_{i,a}$, $n_{i,a}=\pm1$, we set 
\begin{align}
\quad m(z)=:\prod_{i\in I}\prod_{a\in\C}Y_{i}(a z)^{n_{i,a}}:\,.
\label{m(z)}
\end{align}
The contractions of $S_i(w)$ and $m(z)$  are rational functions with only simple poles, so that the commutator is a finite sum 
\begin{align*}
[S_i(w),m(z)]=\sum_{b}d_{i,{m},b}w^{-1}\delta\Bigl(\frac{bz}{w}\Bigr) \,:S_i(bz)m(z): \,,
\end{align*}
where $d_{i,{m},b}\in\C$  and $\delta(z)=\sum_{n\in \Z}z^n$ stands for the delta function. 
Given a basic $qq$-character $\chi=\sum_m m$ 
we define its bosonization to be the current
\begin{align}
V_\chi(z)=\sum_{m} c_{m} m(z)\,,\quad c_m\in\C\,,\label{Vchi}
\end{align}
such that for all $i\in I$ we have 
\begin{align*}
\sum_m \sum_b c_m d_{i,m,b}:S_i(bz) m(z):=0\,.
\end{align*}
We write this relation symbolically as 
\begin{align*}
[\int S_i(w)dw, V_\chi(z)]=0
\end{align*}
and say that $V_{\chi}(z)$ formally commutes with the screening operators $\int S_i(w)dw$. 
In all our examples below, the coefficients $c_m$ are determined uniquely up to an overall scalar multiple 
by the formal commutativity with screening operators.

\subsection{Bosonized $qq$-characters for $\gl(2n|1)$}\label{boson gl(2n|1) sec}

Consider now the case $\gl(2n|1)$ given by the Cartan matrix indexed by $I=\{n,\ldots,1,\bar1,\ldots,\bar n\}$;
see subsection \ref{gl(2n|1) sec}, \eqref{gl(2n|1) cartan}.
The non-zero entries of  the matrix $K$ are 
\begin{itemize}
\item $K_{i,i}=-2\gamma$ for $i\neq 1,\bar1$ and  $K_{1,1}=K_{\bar1,\bar1}=-1$,
\item $K_{i,i\pm1}=K_{\bar i,\overline{i\pm1}}=\gamma$  and $K_{1,\bar1}=K_{\bar1,1}=1-\gamma$,
\end{itemize}
and $\det K=\gamma^{2n-1}\bigl(\gamma+2n(1-\gamma)\bigr)\neq 0$. 
For example, for $n=3$ we have 
\begin{align}\label{gl(2n|1) K}
K_{\gl(6|1)}=\begin{pmatrix}
-2\gamma & \gamma & 0 & 0 & 0 & 0 \\
\gamma & -2\gamma & \gamma & 0 & 0 & 0 \\
0 & \gamma & -1 & 1-\gamma & 0 & 0  \\
0 & 0 & 1-\gamma &-1 & \gamma & 0 \\
0& 0 & 0 & \gamma & -2\gamma & \gamma \\
0& 0& 0 & 0 & \gamma & -2\gamma
\end{pmatrix}.
\end{align} 

First consider the degree zero $qq$-character $\chi_{1,1}=\sum_{i\in I}M_{i,1}$ given by \eqref{one box form} and shown on Figure \ref{one box gl(n|m) pic}.
Using notation \eqref{m(z)}, \eqref{Vchi} we have
\begin{align*}
V_{\chi_{1,1}}(z)=\sum_{i=1}^n q^{2i-1}s_1 M_{i,1}(z)+\frac{s_1-s_1^{-1}}{q-q^{-1}} M_{0,1}(z)+\sum_{i=1}^n q^{-2i+1}s_1^{-1} M_{\bar i,1}(z)\,.
\end{align*}
For general $k\ge1$, $\chi_{k,1}$ given in Lemma \ref{gl 2n one column lem}, is bosonized as
\begin{align}
V_{\chi_{k,1}}(z)=\sum_{i_1,\ldots,i_k\in\{n,\ldots,1,0,\bar1,\ldots,\bar n\}\atop i_1\preceq\ldots\preceq i_k}
c^{\chi}_{i_1,\ldots,i_k}:M_{i_1,q^{k-1}}(z)M_{i_2,q^{k-3}}(z)\cdots M_{i_k,q^{-k+1}}(z):\,.
\label{Vchi-gl}
\end{align}
As before, in the sum, the equality $i_s=i_{s+1}$ is allowed only if $i_s=i_{s+1}=0$. 
To describe the coefficients $c^{\chi}_{i_1,\ldots,i_k}$, let $r$ be the number of times $0$ appears in 
$i_1,\ldots,i_k$. Then 
\begin{align}
c^{\chi}_{i_1,\ldots,i_k}=\prod_{i_p\in \{n,\ldots,1\}}q^{2i_p-1}s_1 \prod_{\overline{i_p}\in\{\bar 1,\ldots,\bar n\}}q^{-2i_p+1}s_1^{-1}
\prod_{j=1}^r \frac{q^{-j+1}s_1-q^{j-1}s_1^{-1}}{q^j-q^{-j}}\,.
\label{cchi}
\end{align}

The currents $V_{\chi_{k,1}}(z)$, $k\ge1$,  
are the generators of the $W$-algebra associated with $\gl(2n|1)$. 
Their commutation relations have been studied in \cite{K1}.

Next consider the $qq$-character $\xi^{(n)}=\xi^{(n)}_1$ of degree $(0,\ldots,0,1,-1,0,\ldots,0)$, given in Lemma \ref{xi explicit lem}.

Note that 
\begin{align*}
\tilde\xi_{0,\ldots,0}=\bs{1}_1\bar{\bs{1}}^{s_1}\,,
&\quad 
\tilde\xi_{1,0,\ldots,0}=\bs{1}_{q^2}\bar{\bs{1}}^{q^2s_1}\bs{n}^{q^{n+1}s_1}\,.
\end{align*}
Each term forms an elementary block of color $i$
\begin{align*}
\tilde\xi_{\nu_n,\ldots,\nu_1}(1+A^{-1}_{i,q^{2(\nu_n+\cdots+\nu_{i+1})+i}s_1})
\end{align*}
if and only if $i>1$ and $(\nu_i,\nu_{i-1})=(0,1)$, or $i=1$ and $\nu_1=0$.

The bosonization of $\xi^{(n)}$ is then 
\begin{align}\label{Vxi}
&V_{\xi^{(n)}}(z)=\sum_{\nu_n\ldots,\nu_1\in\{0,1\}}c^\xi_{\nu_n,\ldots,\nu_1}\tilde\xi_{\nu_n,\ldots,\nu_1}(z)\,,\\
&c^\xi_{\nu_n,\ldots,\nu_1}=(-1)^{\sum_{i=1}^n\nu_i}q^{n(n-1)/2-2\sum_{i=1}^n(i-1)\nu_i}\,.\nn
\end{align}
Interchanging $\bs{i}$ with $\bar{\bs{i}}$, we obtain the bosonization $V_{\eta}(z)$ of $\eta=\eta^{(n)}_1$. 

\begin{rem}
For bosonic nodes $i$ one can determine the dual screening current $S_i^-(z)$ by the relation
$$
A_i(z)=:\frac{S_i^-(s_1^{-1}z)}{S_i^-(s_1z)}:\,.
$$
One can show that the bosonization $V_\chi(z)$ of any basic $qq$-character $\chi$ formally commutes with the dual screening operator $\int S_i^-(w)dw$ as well: 
\begin{align*}
[\int S_i^-(w)dw, V_\chi(z)]=0.
\end{align*}
\end{rem}

\subsection{Rationalization}\label{subsec:rational}

The bosonized $qq$-characters $V_\chi(z)$ given above are expressed in terms of $l$ bosons, where $l$ is the rank of the Cartan matrix. 
Their contractions are rather involved due to the non-rational nature of those of the $Y$ currents
(see Remark \ref{remV} below).
In order to study the relations among $V_\chi(z)$'s, it is convenient to introduce modified currents of the form
$V^{\mathrm{rat}}_\chi(z)=:W_\chi(z)V_\chi(z):$, 
where $W_\chi(z)$'s are vertex operators  in auxiliary bosons which commute with the original $l$ bosons,
such that the contractions of $V^{\mathrm{rat}}_\chi(z)$ are rational. 
Such a procedure is not unique, and the choice of $W_\chi(z)$ can depend on $\chi$. 
We call $V^{\mathrm{rat}}_\chi(z)$  a rationalization of $V_\chi(z)$.  
By construction,  it formally commutes with all screening operators. 

Let us discuss rationalizations of $qq$-characters for $\gl(2n|1)$.  
To this aim, consider an extended  Heisenberg algebra generated by $\{Q_{\sss_i},\sss_{i,r}\mid i\in I, r\in\Z\}$ together with 
two sets of generators $\{Q_{\lambda},Q_{\bar\lambda}, \lambda_{r},\bar\lambda_r\mid r\in \Z\}$. 
The commutation relations for the latter are given by assigning contractions of the vertex operators
\begin{align*}
&\Lambda(z)=e^{Q_\lambda}z^{\lambda_0}(s_1z)^{-\bar\lambda_0}:\exp\Bigl(\sum_{r\neq0}\lambda_{r}z^{-r}\Bigr)\,,\\
&\bar\Lambda(z)=e^{Q_{\bar\lambda}}z^{\bar\lambda_0}(s_1z)^{-\lambda_0}:\exp\Bigl(\sum_{r\neq0}\bar\lambda_{r}z^{-r}\Bigr)\,.
\end{align*}
We demand the following:
\begin{enumerate}
\item There exist vertex operators  $\Lambda'(z),\bar\Lambda'(z)$ commuting with all $S_i(w)$'s, such that 
\begin{align*}
\Lambda(z)=:\Lambda'(z)\frac{Y_1(z)}{Y_{\bar1}(s_1z)}:\,,\quad
\bar\Lambda(z)=:\bar\Lambda'(z)\frac{Y_{\bar1}(z)}{Y_1(s_1z)}:\,.
\end{align*}
\item We have the contractions
\begin{align}\label{LaLa}
&\cont{\Lambda(z)}{\Lambda(w)}=\frac{z-w}{z-s_3^2w}\,,\quad \cont{\bar\Lambda(z)}{\bar\Lambda(w)}=\frac{z-w}{z-s_3^{-2}w}\,,
\\
&\cont{\Lambda(z)}{\bar\Lambda(w)}=\cont{\bar\Lambda(z)}{\Lambda(w)}=1\,.\nn
\end{align}
\end{enumerate}
The requirement (i) means that 
\begin{align}\label{LaS}&
\cont{\Lambda(z)}{S_i(w)}=-\cont{S_i(w)}{\Lambda(z)}=
\begin{cases}
\frac{1}{z-w} & \text{for $i=1$},\\
s_1z-w  & \text{for $i=\bar1$},\\
\end{cases}
\\
&\cont{\bar\Lambda(z)}{S_i(w)}=-\cont{S_i(w)}{\bar\Lambda(z)}=
\begin{cases}
s_1z-w  & \text{for $i=1$},\\
\frac{1}{z-w} & \text{for $i=\bar 1$},\\
\end{cases}
\nn\\
&\cont{\Lambda(z)}{S_i(w)}=\cont{\bar\Lambda(z)}{S_i(w)}=\cont{S_i(w)}{\Lambda(z)}=\cont{S_i(w)}{\bar\Lambda(z)}=1 
\quad\text{for $i\neq 1,\bar1$.}\nn
\end{align}

Modifying \eqref{Vchi}, we define a rationalization of $V_\xi(z)$ by
\begin{align}
&E^{(n)}(z)=:\Lambda'(z)V_{\xi^{(n)}}(z):
=\sum_{\nu\in\{0,1\}^n}c^{\xi}_{\nu}E^{(n)}_{\nu}(z)\,,
\label{E(z)}
\\
&E^{(n)}_{\nu}(z)=:\Lambda'(z)\xi_{\nu}(z):\,.\nn
\end{align}
Similarly we define $F(z)$ by interchanging $\xi$ with  $\eta=\eta^{(n)}_1$, 
$\Lambda(z)$ with $\bar\Lambda(z)$, and $A_i(z)$ with $A_{\bar i}(z)$. 

The Cartan matrix for $\gl(2n|1)$ is naturally a submatrix of that for  $\gl(2n+2|1)$, so 
the root currents $A_i(z)$ for $\gl(2n|1)$ can be viewed as those for $\gl(2n+2|1)$.
This is not the case with the $Y$ currents because the inverse of the Cartan matrices do not have
such a submatrix structure. 
This means that $\xi^{(n)}(z)$, $\Lambda'(z)$ for different $n$ cannot be identified.  
Nevertheless  
the contractions of the currents $E^{(n)}_{\nu}(z)$, $F^{(n)}_{\nu}(z)$
are determined completely by \eqref{LaLa} and \eqref{LaS}, and hence they stabilize in $n$; for example 
\begin{align*}
\cont{E^{(n)}_{0,\mu}(z)}{E^{(n)}_{0,\nu}(z)}
=\cont{E^{(n-1)}_{\mu}(z)}{E^{(n-1)}_{\nu}(z)}\,
\quad
\text{for $\mu,\nu\in\{0,1\}^{n-1}$}.
\end{align*}
For that reason we shall drop the superfix $(n)$ from the notation. 

In particular, $E_{0,\ldots,0}(z)=\Lambda(z)$, and \footnote{The second line of \eqref{E100} is valid for $n\ge2$}
\begin{align}
E_{1,0,\ldots,0}(z)&=:\Lambda(z)\prod_{k=1}^nA_k^{-1}(q^ks_1z):\,\label{E100}\\
    &=:\Lambda'(z)Y_1(q^2z)Y_{\bar1}(q^2z)^{-1}Y_n(q^{n+1}s_1z)^{-1}:\,.\nn
\end{align}
Writing 
\begin{align*}
&E(z)=E^0(z)+E^1(z)\,,\\
&E^i(z)
=\sum_{\nu\in\{0,1\}^{n-1}}c^\xi_{i,\nu}E_{i,\nu}(z)\,,
\end{align*}
we see that $E^0(z)$ is a rationalization of $V_{\xi^{(n-1)}_1}(z)$ for $\gl(2n-2|1)$. 
Moreover, since $Y_n(z)$ commutes with $A_i(w)$ with $i=1,\ldots,n-1$, the second equality of \eqref{E100} shows that  $E^1(z)$
is a rationalization of $V_{\xi^{(n-1)}_{q^2}}(z)$ where  $:\Lambda'(z)Y_n(q^{n+1}s_1z)^{-1}:$ 
is used in place of $\Lambda'(q^2z)$.

\begin{rem}
Since $\tilde\xi_{1,\ldots,1}=\tau_{q^{2n}s_1}\bigl(\tilde\eta_{0,\ldots,0}\bigr)^{-1}$, 
one extra boson $\Lambda'(z)$ is enough for the purpose of just rationalizing $V_\xi(z)$ and $V_\eta(z)$.
However we prefer to use two bosons because formulas are simpler and more symmetric.
\end{rem}

The following lemma will be used to study the relations between the currents $E(z)$: 

\begin{lem}\label{lem:E0E1}
For all $\mu,\nu\in\{0,1\}^{n-1}$ we have
\begin{align}
&\cont{E_{1,\mu}(z)}{E_{1,\nu}(w)}=\cont{E_{0,\mu}(z)}{E_{0,\nu}(w)}\,,
\label{E1E1}\\
&\cont{E_{0,\mu}(z)}{E_{1,\nu}(w)}=
\frac{z-s_3^{-2}w}{z-s_3^2w}\frac{z-s_3^2q^2w}{z-q^2w}\,
\cont{E_{0,\mu}(z)}{E_{0,\nu}(q^2w)}\,,
\label{E0E1}\\
&\cont{E_{1,\nu}(z)}{E_{0,\mu}(w)}=
\frac{q^2s_3^{-2}z-w}{q^2z-w}\,
\cont{E_{0,\nu}(q^2z)}{E_{0,\mu}(w)}\,.
\label{E1E0}
\end{align}
\end{lem}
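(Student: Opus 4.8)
The plan is to prove Lemma \ref{lem:E0E1} by reducing everything to contractions that have already been computed or stabilized in $n$. Recall from \eqref{E(z)} that $E_{i,\nu}(z)=:\Lambda'(z)\xi_{i,\nu}(z):$ where $\xi_{i,\nu}$ denotes the monomial $\tilde\xi_{i,\nu}$ in $\xi^{(n)}$, and that the contractions factor multiplicatively over the vertex operators appearing in each normally ordered product. The starting observation is the recursion structure of $\xi^{(n)}$ in Lemmas \ref{eta lem} and \ref{xi explicit lem}: by the explicit formula \eqref{xi-explicit}, the top index $\nu_n$ enters $\tilde\xi_{\nu_n,\nu'}$ only through a $\tau_{q^{2\nu_n}}$ shift of the lower-rank part together with a single extra factor $Y_{n,q^{n+\nu_n-\nu_{n-1}}s_1}^{-\nu_n+\nu_{n-1}}$. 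Thus $\tilde\xi_{0,\mu}=\tilde\xi_{\mu}^{(n-1)}$ and $\tilde\xi_{1,\mu}=\tau_{q^2}(\tilde\xi_{\mu}^{(n-1)})\cdot Y_{n,q^{n+1-\mu_{n-1}}s_1}^{\ldots}$, and the same for $\nu$. The whole computation therefore splits into the $\gl(2n-2|1)$ part plus the correction coming from the $Y_n$ factors and from the $q^2$ shift.

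First I would establish \eqref{E1E1}. Here both $E_{1,\mu}$ and $E_{1,\nu}$ carry the $\tau_{q^2}$ shift and a $Y_n$ factor. Since contractions are translation equivariant in the sense that $\cont{(\tau_a V)(z)}{(\tau_a W)(w)}=\cont{V(z)}{W(w)}$, the common $q^2$ shift cancels, reducing the $\Lambda'\cdot\tau_{q^2}\tilde\xi^{(n-1)}$ parts to $\cont{E_{0,\mu}(z)}{E_{0,\nu}(w)}$ up to the contribution of the two $Y_n$ factors against each other and against the $\Lambda'$ and lower currents. The key point is that $Y_n(w)$ contracts trivially with all $A_i$ and $S_i$ for $i\neq n$, and with $\Lambda'$ (which commutes with all screenings); so the only surviving extra factor is $\cont{Y_n(\ast z)}{Y_n(\ast w)}$, which is symmetric in the two arguments. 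I expect this symmetric $Y_n$--$Y_n$ contraction to cancel against the mismatch introduced by the shift, leaving exactly the right-hand side. This is the cleanest of the three identities.

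For \eqref{E0E1} and \eqref{E1E0} the mechanism is the asymmetric one: $E_{0,\mu}$ has no shift and no $Y_n$ factor, while $E_{1,\nu}$ has both. I would compute $\cont{E_{0,\mu}(z)}{E_{1,\nu}(w)}$ as $\cont{E_{0,\mu}(z)}{E_{0,\nu}(q^2w)}$ times the correction coming from the extra $Y_n(q^{n+1-\nu_{n-1}}s_1 w)^{\pm1}$ factor contracted against $\Lambda'(z)$ and against the $Y_1$, $Y_{\bar1}$ currents inside $E_{0,\mu}$. Since $Y_n$ commutes with everything of lower color, the only nonzero contributions are $\cont{\Lambda'(z)}{Y_n(\ast w)}$ and possibly $\cont{Y_n(\ast z)}{\cdots}$ — but $E_{0,\mu}$ contains no $Y_n$, so it is purely $\cont{\Lambda'(z)}{Y_n(\ast w)}$. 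The targeted scalar ratios $\frac{z-s_3^{-2}w}{z-s_3^2w}\cdot\frac{z-s_3^2q^2w}{z-q^2w}$ and $\frac{q^2s_3^{-2}z-w}{q^2z-w}$ must then emerge from combining this single $\Lambda'$--$Y_n$ contraction with the extra $\Lambda(z)\Lambda(w)$ mismatch recorded in \eqref{LaLa} and the shift discrepancy $\tau_{q^2}$. I would pin down $\cont{\Lambda'(z)}{Y_n(w)}$ using requirement (i), namely $\Lambda(z)=:\Lambda'(z)Y_1(z)Y_{\bar1}(s_1z)^{-1}:$, together with the known rational contractions $\cont{\Lambda(z)}{\Lambda(w)}$ from \eqref{LaLa} and the fact that $\Lambda'$ commutes with all screenings.

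The main obstacle will be the bookkeeping of the factor $\frac{z-s_3^2q^2w}{z-q^2w}$ in \eqref{E0E1}: this rational factor is not of the pure $\Lambda$--$\Lambda$ type, so it must come from the interplay of the $\Lambda'$--$Y_n$ contraction with the $q^2$ shift, and getting the exact poles and zeros (including the spectral parameters $s_3^{\pm2}$ and the $q^2$ rescaling) correct requires tracking precisely which argument each $Y_n$ sits at as a function of $\nu_{n-1}$, and checking that the dependence on $\nu_{n-1}$ actually drops out so that the stated $\mu,\nu$-independent ratio is genuine. I would verify the $\nu_{n-1}$-independence directly from \eqref{xi-explicit} and confirm the final rational functions by matching simple poles and the leading power of $z$, rather than expanding the exponential series term by term. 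The asymmetry $\cont{Y_i(w)}{S_i(z)}=-\cont{S_i(z)}{Y_i(w)}$ in the fermionic case is also a source of sign subtleties that I would watch carefully, though since both sides of each identity involve the same fermionic content the signs should match.
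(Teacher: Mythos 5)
Your overall strategy matches the paper's: factor $E_{1,\nu}(w)$ as $E_{0,\nu}(q^2w)$ times a $\nu$-independent ratio (you rightly flag that the $\nu_{n-1}$-independence of the leftover $Y_n$ factor must be verified from \eqref{xi-explicit}; it does hold, and gives the identity $:E_{1,\nu}(w)E_{0,\nu}(q^2w)^{-1}:\,=\,:\Lambda'(w)\Lambda'(q^2w)^{-1}Y_n(q^{n+1}s_1w)^{-1}:$ that the paper also uses), and then compute the rational correction factor. However, your plan for computing that correction rests on a false premise. You assert that $Y_n$ ``commutes with everything of lower color'' and contracts trivially with $\Lambda'$, so that the only surviving piece is $\cont{\Lambda'(z)}{Y_n(\ast w)}$. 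What is actually trivial is the contraction of the \emph{root} currents $A_i$ and the \emph{screening} currents $S_i$ with $Y_j$ for $i\neq j$; the contractions $\cont{Y_i(z)}{Y_j(w)}$ for $i\neq j$ are non-trivial and non-rational, because $\ssx_{i,r}$ is built from the full inverse $\bigl(B^{[r]}\bigr)^{-1}$, which mixes all colors. For the same reason $\cont{\Lambda'(z)}{Y_n(w)}$ is neither rational nor determined by requirements (i), (ii): only the contractions of $\Lambda$ with the $S_i$ (hence with the $A_i$) and with $\Lambda$ itself are pinned down, by \eqref{LaS} and \eqref{LaLa}. So the single contraction you propose to ``pin down'' cannot be extracted from the stated data, and the terms you discard do not actually vanish.

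The missing idea is to rewrite the ratio in terms of $\Lambda$ and root currents instead of $\Lambda'$ and $Y$'s: by \eqref{E100}, $:\Lambda'(w)\Lambda'(q^2w)^{-1}Y_n(q^{n+1}s_1w)^{-1}:\,=\,:E_{1,0,\ldots,0}(w)\Lambda(q^2w)^{-1}:\,=\,:\Lambda(w)\Lambda(q^2w)^{-1}\prod_{k}A_k^{-1}(q^ks_1w):$. Since $E_{0,\mu}(z)=\,:\Lambda(z)P:$ with $P$ a product of $A_i^{-1}(az)$, $i\neq n$, the contraction of $E_{0,\mu}(z)$ with this ratio collapses to $\cont{\Lambda(z)}{A_1^{-1}(qs_1w)}\cdot\cont{\Lambda(z)}{:\Lambda(w)\Lambda(q^2w)^{-1}:}$, both rational and read off from \eqref{LaS}, \eqref{LaLa}; their product is exactly the prefactor in \eqref{E0E1}, and \eqref{E1E0} is analogous. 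The same objection applies to your treatment of \eqref{E1E1}: the claim that the correction is ``the symmetric $Y_n$--$Y_n$ contraction'' which you ``expect to cancel'' both invokes contractions that are not individually rational and is not an argument; the paper instead reduces to $\mu=\nu=0$ by the stabilization of contractions in $n$ and verifies $\cont{E_{1,0,\ldots,0}(z)}{E_{1,0,\ldots,0}(w)}=\cont{\Lambda(z)}{\Lambda(w)}$ directly from the $\Lambda$- and $A$-contractions.
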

\begin{proof}
From the foregoing discussions, it is enough to prove \eqref{E1E1} for $\mu=\nu=0$. This can be checked without difficulty. 

Consider \eqref{E0E1}. From the definition, we have
\begin{align*}
:\frac{E_{1,\nu}(w)} {E_{0,\nu}(q^2w)}:
=:\frac{\Lambda'(w)}{\Lambda'(q^2w)}Y_n(q^{n+1}s_1w)^{-1}:
=:\frac{E_{1,0,\ldots,0}(w)} {\Lambda(q^2w)}:\,.
\end{align*} 
Since $E_{0,\mu}(z)$ can be written as $:\Lambda(z)P:$ with $P$ is a polynomial of $A_{i}(a z)^{-1}$ with $i\neq n$, $a\in\C$, we have
\begin{align*}
\cont{E_{0,\mu}(z)}{:\frac{E_{1,0,\ldots,0}(w)} {\Lambda(q^2w)}:}
=\cont{\Lambda(z)}{:\frac{E_{1,0,\ldots,0}(w)} {\Lambda(q^2w)}:}\,.
\end{align*} 
This implies that 
\begin{align*}
\cont{E_{0,\mu}(z)}{E_{1,\nu}(w)}
&=\cont{E_{0,\mu}(z)}{E_{0,\nu}(q^2w)}\cdot \cont{E_{0,\mu}(z)}
{:\frac{E_{1,\nu}(w)}{E_{0,\nu}(q^2w)}:} 
\\
&=\cont{E_{0,\mu}(z)}{E_{0,\nu}(q^2w)}\cdot 
\cont{\Lambda(z)}{:\frac{E_{1,0,\ldots,0}(w)}{\Lambda(q^2w)}:}\,.
\end{align*}
Noting \eqref{E100}
we obtain
\begin{align*}
\cont{\Lambda(z)}{:\frac{E_{1,0,\ldots,0}(w)}{\Lambda(q^2w)}:}
&=\cont{\Lambda(z)}{A_1^{-1}(qs_1w)}
\cdot
\cont{\Lambda(z)}{:\frac{\Lambda(w)}{\Lambda(q^2w)}:}
\\
&=\frac{z-s_3^{-2}w}{z-w}\cdot \frac{z-w}{z-s_3^2w}\frac{z-q^2s_3^2w}{z-q^2w}\,,\\
\end{align*}
which proves \eqref{E0E1}. Equation \eqref{E1E0} can be derived similarly.
\end{proof}

\begin{thm}\label{prop:EE}
The product $E(z)E(w)$ (resp.  $F(z)F(w)$)
is regular except for a simple pole at $z=s_3^2w$ (resp.  $z=s_3^{-2}w$). 
We have the quadratic relations
\begin{align}
&(z-s_3^2w)E(z)E(w)+(w-s_3^2z)E(w)E(z)=0\,,\label{EErel}\\
&(z-s_3^{-2}w)F(z)F(w)+(w-s_3^{-2}z)F(w)F(z)=0\,.\label{FFrel}
\end{align}
\end{thm}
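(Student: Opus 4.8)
I prove both assertions by induction on $n$, exploiting the decomposition $E(z)=E^0(z)+E^1(z)$ of \eqref{E(z)} together with the contraction identities of Lemma \ref{lem:E0E1}; the current $F(z)$ is handled by the symmetry $i\leftrightarrow\bar i$ (which replaces $\cont{\Lambda(z)}{\Lambda(w)}$ by $\cont{\bar\Lambda(z)}{\bar\Lambda(w)}$ and hence $s_3^2$ by $s_3^{-2}$ throughout, see \eqref{LaLa}), so I concentrate on $E(z)$. Writing $E(z)=\sum_a c_a E_a(z)$ with $a=(i,\nu)$, and using that normal ordering is symmetric, both $E(z)E(w)$ and $E(w)E(z)$ are sums over the \emph{same} operators $:E_a(z)E_b(w):$, with scalar coefficients $c_ac_b\cont{E_a(z)}{E_b(w)}$ and $c_ac_b\cont{E_b(w)}{E_a(z)}$ respectively (relabel $a\leftrightarrow b$ in the second product). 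Hence \eqref{EErel} follows once I establish, for every ordered pair $(a,b)$, the termwise exchange identity
\begin{align*}
(z-s_3^2w)\cont{E_a(z)}{E_b(w)}+(w-s_3^2z)\cont{E_b(w)}{E_a(z)}=0. \tag{$\star$}
\end{align*}
Assembling $(\star)$ into a sum needs no information about possible coincidences among the $:E_a(z)E_b(w):$, so this reduction is clean. For $n=1$ the identity is immediate: by \eqref{E1E1} all four contractions reduce to $\cont{\Lambda(z)}{\Lambda(w)}=(z-w)/(z-s_3^2w)$, for which $(\star)$ is $(z-w)+(w-z)=0$.

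For the inductive step I split into the four cases $a=(i,\mu)$, $b=(j,\nu)$ with $i,j\in\{0,1\}$. Case $(0,0)$ is, by the stabilization of contractions, precisely $(\star)$ for the $\gl(2n-2|1)$ currents, which holds by the induction hypothesis; case $(1,1)$ is identical by \eqref{E1E1}. For the mixed case $(0,1)$ I insert \eqref{E0E1} and \eqref{E1E0}, expressing both contractions through the level $(n-1)$ quantities $\cont{E_{0,\mu}(z)}{E_{0,\nu}(q^2w)}$ and $\cont{E_{0,\nu}(q^2w)}{E_{0,\mu}(z)}$. After the simplification $(z-s_3^2w)\frac{z-s_3^{-2}w}{z-s_3^2w}=z-s_3^{-2}w$ and elimination of one contraction via the induction hypothesis at the shifted point $w'=q^2w$, the claim $(\star)$ reduces to the vanishing of the explicit expression $(z-s_3^{-2}w)(q^2w-s_3^2z)+(w-s_3^2z)(q^2s_3^{-2}w-z)$, which a one–line expansion shows is identically zero. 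Case $(1,0)$ is symmetric. This proves $(\star)$ for all pairs, hence the quadratic relation \eqref{EErel}.

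It remains to control the pole structure. The blocks $E^0(z)E^0(w)$ and $E^1(z)E^1(w)$ are, up to an overall scalar and the auxiliary boson, the level $(n-1)$ products (use stabilization and \eqref{E1E1}), so by induction their only singularity is the simple pole at $z=s_3^2w$. New \emph{spurious} poles are produced only by the factors $1/(z-q^2w)$ in \eqref{E0E1} and $1/(q^2z-w)$ in \eqref{E1E0} that enter the cross terms $E^0(z)E^1(w)$ and $E^1(z)E^0(w)$. At $z=q^2w$ only $E^0(z)E^1(w)$ contributes, with residue proportional to
\begin{align*}
\sum_{\mu,\nu}c_{0,\mu}c_{1,\nu}\,\cont{E_{0,\mu}(q^2w)}{E_{0,\nu}(q^2w)}\,:E_{0,\mu}(q^2w)E_{1,\nu}(w):\,.
\end{align*}
Three symmetries now conspire. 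The operator is invariant under $\mu\leftrightarrow\nu$, because $:E_{0,\mu}(q^2w)E_{1,\nu}(w):\,=\,:E_{0,\mu}(q^2w)E_{0,\nu}(q^2w)\tfrac{\Lambda'(w)}{\Lambda'(q^2w)}Y_n(q^{n+1}s_1w)^{-1}:$ and the first normal-ordered factor is at a single point; the coefficient product $c_{0,\mu}c_{1,\nu}$ is invariant under $\mu\leftrightarrow\nu$ by the explicit formula for $c^\xi$ in \eqref{Vxi}; while the coincident-point contraction $\cont{E_{0,\mu}(q^2w)}{E_{0,\nu}(q^2w)}$ is \emph{antisymmetric} under $\mu\leftrightarrow\nu$, this being the $z\to w$ specialization of the level $(n-1)$ identity $(\star)$. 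Thus each summand is cancelled by its $\mu\leftrightarrow\nu$ partner, the residue vanishes, and the pole at $z=q^{-2}w$ in $E^1(z)E^0(w)$ disappears symmetrically. Having located the only pole at $z=s_3^2w$, its simplicity follows from \eqref{EErel}: $(z-s_3^2w)E(z)E(w)=(s_3^2z-w)E(w)E(z)$, whose right-hand side is regular at $z=s_3^2w$ since $E(w)E(z)$ is singular only at $z=s_3^{-2}w$.

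The quadratic relation is thus a transparent term-by-term consequence of Lemma \ref{lem:E0E1}, and I expect no difficulty there. The real content, and the main obstacle, lies in the regularity claim: the contractions of individual summands genuinely acquire extra poles, and their cancellation rests on the interplay of the symmetry of the coefficients $c^\xi$, the symmetry of the normal-ordered products at the collision locus, and the antisymmetry of the coincident-point contractions furnished by the induction hypothesis. The subtle point to verify carefully is that, as $n$ grows, the pairs of summands whose residues cancel always share the higher-color data $\nu_{i}$, so that the extra $Y_i$ factors present at level $n$ match and the cancellation propagates through the induction.
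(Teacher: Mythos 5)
Your proof is correct and follows essentially the same route as the paper's: induction on $n$ via the splitting $E=E^0+E^1$ and the contraction identities of Lemma \ref{lem:E0E1}, with your termwise identity $(\star)$ and the $\mu\leftrightarrow\nu$ residue pairing at $z=q^2w$ being a more granular version of the paper's argument, which instead deduces regularity of the cross terms from the vanishing of the skew-symmetric level-$(n-1)$ product $(z-s_3^2q^2w)E^0(z)E^0(q^2w)$ on the diagonal $z=q^2w$. One small slip: at $n=1$ the mixed contractions, e.g.\ that of $E_0(z)$ with $E_1(w)$, which equals $(z-s_3^{-2}w)/(z-s_3^{2}w)$, do \emph{not} reduce to the contraction $(z-w)/(z-s_3^2w)$ of $\Lambda(z)$ with $\Lambda(w)$, but your mixed-case computation from the inductive step applies verbatim there (with $\mu,\nu$ empty), so the base case still goes through.
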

\begin{proof}
We use induction on $n$. 
The statement is easy to check in the case $n=1$. 
Suppose it is true for $n-1$. 
By the induction hypothesis, $(z-s_3^2w)E^0(z)E^0(w)$, $(z-s_3^2w)E^1(z)E^1(w)$ 
are both regular and skew symmetric in $z,w$.
Furthermore, from Lemma \ref{lem:E0E1} we find
\begin{align*}
&(z-s_3^2w)E^0(z)E^1(w)=(z-s_3^{-2}w)\frac{z-s_3^2q^2w}{z-q^2w}
\sum_{\mu,\nu}\cont{E_{0,\mu}(z)}{E_{0,\nu}(q^2w)}\\
&\quad\times
:E_{0,\mu}(z)E_{0,\nu}(q^2w)\Lambda'(w)\Lambda'(q^2w)^{-1}
Y_n(q^{n+1}s_1w)^{-1}:\,,\\
&(z-s_3^2w)E^1(z)E^0(w)=(s_3^{-2}z-w)\frac{q^2z-s_3^2w}{q^2z-w}
\sum_{\mu,\nu}\cont{E_{0,\nu}(q^2z)}{E_{0,\mu}(w)}\\
&\quad\times
:E_{0,\nu}(q^2z)E_{0,\mu}(w)\Lambda'(z)\Lambda'(q^2z)^{-1}
Y_n(q^{n+1}s_1z)^{-1}:\,.
\end{align*}
Due to skew symmetry of $(z-s_3^2w)E^0(z)E^0(w)$, 
\begin{align*}
\frac{z-s_3^2q^2w}{z-q^2w} E^0(z)E^0(q^2w)=
\frac{z-s_3^2q^2w}{z-q^2w}\sum_{\mu,\nu}\cont{E_{0,\mu}(z)}{E_{0,\nu}(q^2w)}:E_{0,\mu}(z)E_{0,\nu}(q^2w):
\end{align*}
is regular, hence so is the right hand side of the first equality. 
Similarly the second is regular. 
Therefore the product $(z-s_3^2w)E(z)E(w)$ has no poles. Combining the relations above, we find that  
\eqref{EErel} holds.

The assertion about $F(z)$ is shown similarly.
\end{proof}

Next we consider the relations between $E(z)$ and $F(z)$. 

\begin{lem}\label{lem:EFcont}
For $\mu=(\mu_n,\ldots,\mu_1)\in\{0,1\}^n$, we set $|\mu|=\sum_{i=1}^n\mu_i$.
Then for all for $\mu,\nu\in\{0,1\}^n$ we have 
\begin{align*}
\cont{E_{\mu}(z)}{F_\nu(w)}=
\prod_{i=1}^{|\mu|}\frac{q^{2i-2-2|\nu|}s_1^{-1}z-w}{q^{2i-2|\nu|}s_1z-w}
\prod_{j=1}^{|\nu|}\frac{z-q^{2j-2}s_1^{-1}w}{z-q^{2j}s_1w}\,.
\end{align*}
\end{lem}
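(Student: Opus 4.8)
The plan is to strip everything down to the mutual interaction of the two fermionic colors $1,\bar1$ and then telescope. First I would record that every monomial of $\xi^{(n)}$ is obtained from the top monomial $\bs 1_1\bs{\bar 1}^{s_1}$ by multiplication with inverse affine roots $A_{i,\sigma}^{-1}$ of un-barred colors $i\in\{1,\dots,n\}$, so that
\[
E_\mu(z)=:\Lambda(z)\,\Pi_\mu(z):,\qquad F_\nu(w)=:\bar\Lambda(w)\,\bar\Pi_\nu(w):,
\]
where $E_{0,\dots,0}=\Lambda$, $F_{0,\dots,0}=\bar\Lambda$, and $\Pi_\mu$ (resp. $\bar\Pi_\nu$) is a product of $A_{i,\sigma}^{-1}$ (resp. $A_{\bar j,\sigma}^{-1}$). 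Since contractions are multiplicative over normal-ordered products,
\[
\cont{E_\mu(z)}{F_\nu(w)}=\cont{\Lambda(z)}{\bar\Lambda(w)}\cdot\cont{\Lambda(z)}{\bar\Pi_\nu(w)}\cdot\cont{\Pi_\mu(z)}{\bar\Lambda(w)}\cdot\cont{\Pi_\mu(z)}{\bar\Pi_\nu(w)}.
\]

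The first factor is $1$ by \eqref{LaLa}. The crucial reduction is that the only non-vanishing off-diagonal block of $B$ is $B_{1\bar1}=t_2t_3$, while $\Lambda,\bar\Lambda$ carry only the colors $1,\bar1$; hence $A_i$ with $i\ge2$ (resp. $A_{\bar j}$ with $j\ge2$) commutes with the entire opposite tower, and the whole contraction is governed by the $A_1$-content of $\Pi_\mu$ and the $A_{\bar1}$-content of $\bar\Pi_\nu$. The second combinatorial input I need is that the color-$1$ part of $\Pi_\mu$ depends only on $|\mu|$, namely it equals $\prod_{k=1}^{|\mu|}A_{1,q^{2k-1}s_1}^{-1}$, and symmetrically for the color-$\bar1$ part of $\bar\Pi_\nu$. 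This follows from Lemma \ref{xi explicit lem}; equivalently it can be proved by induction on $n$ through the $E^0/E^1$ decomposition, the case $|\mu|=1$ being \eqref{E100}.

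With this in hand I would compute the three surviving elementary contractions from the definitions of $\Lambda,\bar\Lambda,A_i$ and the fermionic rules $\cont{S_i(z)}{Y_i(w)}=(z-w)^{-1}$ and $\cont{A_i(z)}{Y_j(w)}=1$ for $i\neq j$. Writing $c=q^{2k-1}s_1$ and $c'=q^{2l-1}s_1$ and using $s_3c=q^{2k-2}$, $s_3^{-1}c=q^{2k}s_1^2$ (and likewise for $c'$), one gets
\[
\cont{\Lambda(z)}{A_{\bar1}(cw)^{-1}}=\frac{z-q^{2k-2}s_1^{-1}w}{z-q^{2k}s_1w},\qquad
\cont{A_1(c'z)^{-1}}{\bar\Lambda(w)}=\frac{q^{2l-2}s_1^{-1}z-w}{q^{2l}s_1z-w},
\]
while, since $A_{\bar1}(cw)$ meets $A_1(c'z)$ only through its color-$1$ part $Y_1(qcw)Y_1(q^{-1}cw)^{-1}$,
\[
\cont{A_1(c'z)^{-1}}{A_{\bar1}(cw)^{-1}}=\frac{(q^{2l-2}z-q^{2k}s_1w)(q^{2l}s_1^2z-q^{2k-2}s_1w)}{(q^{2l}s_1^2z-q^{2k}s_1w)(q^{2l-2}z-q^{2k-2}s_1w)}.
\]

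Finally I assemble the four groups. The $\Lambda$–$\bar\Pi_\nu$ factors give at once the second product $\prod_{j=1}^{|\nu|}\frac{z-q^{2j-2}s_1^{-1}w}{z-q^{2j}s_1w}$. For the first product I fix $l$ and take the product over $k=1,\dots,|\nu|$ of the cross-contraction; setting $a=q^{2l-2}z$, $b=q^{2l}s_1^2z$ the factors in $a$ and in $b$ telescope separately, leaving $\frac{(a-q^{2|\nu|}s_1w)(b-s_1w)}{(a-s_1w)(b-q^{2|\nu|}s_1w)}$. Multiplying by the $\Pi_\mu$–$\bar\Lambda$ factor $\frac{q^{2l-2}s_1^{-1}z-w}{q^{2l}s_1z-w}=\frac{a-s_1w}{b-s_1w}$ cancels the two spurious factors and yields $\frac{a-q^{2|\nu|}s_1w}{b-q^{2|\nu|}s_1w}=\frac{q^{2l-2-2|\nu|}s_1^{-1}z-w}{q^{2l-2|\nu|}s_1z-w}$; the product over $l=1,\dots,|\mu|$ is the first product in the statement. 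The hard part is the combinatorial claim of the second paragraph: that the $A_1$-content is exactly $\prod_{k=1}^{|\mu|}A_{1,q^{2k-1}s_1}^{-1}$ independently of which $\mu$ with the given $|\mu|$ one picks, since both $A_1$ and $A_2$ feed the color-$1$ weight and only $A_1$ contributes to the $EF$-contraction.
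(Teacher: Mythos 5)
Your proposal is correct and follows essentially the same route as the paper's proof: both reduce to the observation that $E_\mu(z)=:\Lambda(z)\prod_{i=1}^{|\mu|}A_1^{-1}(q^{2i-1}s_1z)\,P:$ with $P$ built from $A_j^{-1}$, $j\ge 2$, which contract trivially with the barred side, and then evaluate the three surviving elementary contractions $\Lambda$--$A_{\bar1}$, $A_1$--$\bar\Lambda$, $A_1$--$A_{\bar1}$. You merely carry out explicitly the telescoping that the paper dismisses as "a direct calculation," and you supply (via Lemma~\ref{xi explicit lem} and the $Y_{\bar1}$-content of $\tilde\xi_\mu$) the justification that the $A_1$-content depends only on $|\mu|$, which the paper asserts without proof; all formulas check out.
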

\begin{proof}
The currents $E_\mu(z)$, $F_\nu(z)$  can be written as 
\begin{align*}
E_{\mu}(z)=:\Lambda(z)\prod_{i=1}^{|\mu|}A_1(q^{2i-1}s_1z)P:\,,
\quad
F_{\nu}(z)=:\bar\Lambda(z)\prod_{j=1}^{|\nu|}A_{\bar1}(q^{2j-1}s_1z)Q:\,,
\end{align*}
where $P$ (resp. $Q$) is a polynomial in the currents $A^{-1}_i(az)$ (resp. $A^{-1}_{\bar i}(az)$), 
$2\le i \le n$. They do not participate in the contraction because $\cont{P}{\bar\Lambda}$, 
$\cont{\Lambda}{Q}$ and $\cont{P}{Q}$ are all $1$. 

The rest is a direct calculation using 
\begin{align*}
&\cont{A_i(z)}{Y_i(w)}=\cont{Y_i(w)}{A_i(z)}=\frac{s_3z-w}{s_3^{-1}z-w}\quad \text{for $i=1,\bar{1}$}\,,\\
&\cont{A_1(z)}{A_{\bar 1}(w)}=\frac{z-q^2s_1w}{z-s_1w}\frac{z-q^{-2}s_1^{-1}w}{z-s_1^{-1}w}\,.
\end{align*}
\end{proof}

\begin{thm}\label{[EF] lem}
We have the relation 
\begin{align*}
[E(z),F(w)]=\sum_{k=0}^{n-1}s_3^{-1}a_{n,k}\delta(q^{2n-2k}s_1 z/w)T_k(q^{n-k}s_1 z)
+\sum_{k=0}^{n-1}s_3a_{n,k}\delta(q^{2n-2k}s_1 w/z)\bar{T}_{k}(q^{n-k}s_1 w)\,,
\end{align*}
where  $T_k(z),\bar{T}_k(z)$ are rationalizations of $V_{\chi_{k,1}}(z)$, $V_{\chi_{\bar k,1}}(z)$:
\footnote{For $k=0$, we set $V_{\chi_{0,1}}(z)=V_{\chi_{\bar 0, 1}}(z)=1$.}
\begin{align*}
&T_k(z)= :\Lambda'(q^{-n+k}s_1^{-1}z)\bar\Lambda'(q^{n-k}z) V_{\chi_{k,1}}(z)
:\,,\\
&\bar{T}_k(z)= :\Lambda'(q^{n-k}z)\bar\Lambda'(q^{-n+k}s_1^{-1}z) V_{\chi_{\bar k, 1}}(z)
:\,,
\end{align*}
and $a_{n,k}=s_3^{n-1}\prod_{j=1}^{n-k}(q^{-j}s_1^{-1}-q^js_1)/\prod_{j=1}^{n-k-1}(q^j-q^{-j})$.
\end{thm}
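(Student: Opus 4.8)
The plan is to reduce everything to the contractions of Lemma~\ref{lem:EFcont} together with the elementary fact that the difference of the two expansions of a rational contraction is a sum of $\delta$-functions. Writing $E(z)=\sum_{\mu}c^\xi_\mu E_\mu(z)$ and $F(w)=\sum_\nu c^\xi_\nu F_\nu(w)$ (the coefficients for $F$ coincide with those for $E$ by the symmetry $i\leftrightarrow\bar i$), bilinearity gives $[E(z),F(w)]=\sum_{\mu,\nu}c^\xi_\mu c^\xi_\nu\,[E_\mu(z),F_\nu(w)]$. Since $:E_\mu(z)F_\nu(w):$ does not depend on the order of the factors, $[E_\mu(z),F_\nu(w)]=\bigl(\iota_{z,w}-\iota_{w,z}\bigr)g_{\mu,\nu}(w/z)\cdot :E_\mu(z)F_\nu(w):$, where $g_{\mu,\nu}$ is the contraction of Lemma~\ref{lem:EFcont}, homogeneous of degree $0$ in $(z,w)$, and $\iota_{z,w}$, $\iota_{w,z}$ are its expansions for $|z|>|w|$ and $|w|>|z|$. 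The identity $(\iota_{z,w}-\iota_{w,z})\frac{1}{c-x}=c^{-1}\delta(x/c)$ with $x=w/z$ then converts each \emph{simple pole} of $g_{\mu,\nu}$ into a $\delta$-function weighted by the residue there. So the proof splits into (a) finding the surviving poles of the summed contraction and (b) identifying the weighted residues with $T_k$ and $\bar T_k$.

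First I would settle the pole structure. For fixed $\mu,\nu$ with $|\mu|=a$, $|\nu|=b$, Lemma~\ref{lem:EFcont} gives poles at $w/z=q^{2(i-b)}s_1$ ($1\le i\le a$) from the first product and at $w/z=q^{-2j}s_1^{-1}$ ($1\le j\le b$) from the second, together with the numerator zeros $w/z=q^{2(1-j)}s_1$ and $w/z=q^{2(i-1-b)}s_1^{-1}$. In the $s_1^{+1}$ sector the poles cancel against the $s_1^{+1}$ zeros, and a direct count leaves exactly the poles $w/z=q^{2m}s_1$, $m=1,\dots,a-b$ (none if $a\le b$); by the $i\leftrightarrow\bar i$ symmetry the $s_1^{-1}$ sector leaves $w/z=q^{-2m}s_1^{-1}$, $m=1,\dots,b-a$. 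Hence a pair $(\mu,\nu)$ contributes to the pole $w=q^{2(n-k)}s_1z$ exactly when $a-b\ge n-k$, and ranging over all pairs the surviving poles of $E(z)F(w)$ are $w=q^{2(n-k)}s_1z$ and $z=q^{2(n-k)}s_1w$ for $k=0,\dots,n-1$, matching the arguments of the $\delta$-functions in the statement.

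The main step is to evaluate the residue at a fixed pole $w=q^{2(n-k)}s_1z$ and recognize it as $T_k$. At this value the auxiliary dressings multiply to $:\Lambda'(z)\bar\Lambda'(q^{2(n-k)}s_1z):$, which is precisely the $\Lambda'\bar\Lambda'$ factor of $T_k(q^{n-k}s_1z)$; thus what remains to prove is that the sum over contributing pairs of the residue times $c^\xi_\mu c^\xi_\nu\,:E_\mu(z)F_\nu(w):$ reproduces $V_{\chi_{k,1}}(q^{n-k}s_1z)$ up to the scalar $a_{n,k}$. Conceptually this is a fusion of $qq$-characters: the degree $(0,\dots,1,-1)$ character $\xi$ fuses with the degree $(0,\dots,-1,1)$ character $\eta$ at the $k$-th pole into the degree-zero character $\chi_{k,1}$ of the $k$-th exterior power. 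Concretely, I would check that the specialized monomials $:E_\mu(z)F_\nu(q^{2(n-k)}s_1z):$ run exactly over the bosonized monomials $M_{i_1,\dots,i_k}$ of \eqref{Vchi-gl}, and that the weighted residues collapse onto the $W$-algebra coefficients $c^\chi_{i_1,\dots,i_k}$, with the global constant $a_{n,k}$ fixed by a single contributing pair (for instance the top term).

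The hard part is this last combinatorial collapse together with the bookkeeping of constants, because many pairs $(\mu,\nu)$ with $|\mu|-|\nu|\ge n-k$ feed the same pole. To organize it I would run an induction on $n$ parallel to the proof of Theorem~\ref{prop:EE}, using the decompositions $E=E^0+E^1$, $F=F^0+F^1$ of Subsection~\ref{subsec:rational}: at the level of (stabilized) contractions the term $[E^0(z),F^0(w)]$ reproduces the $\gl(2n-2|1)$ commutator, hence the $T_k,\bar T_k$ with $k\le n-2$ by the induction hypothesis, while the terms containing $E^1$ or $F^1$ carry the extra factors $Y_n(q^{n+1}s_1z)^{\mp1}$ and supply both the new top currents $T_{n-1},\bar T_{n-1}$ and the $q^2$-shifts that complete the lower $T_k$. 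The product $\prod_j(q^{-j}s_1^{-1}-q^js_1)/\prod_j(q^j-q^{-j})$ defining $a_{n,k}$ should then emerge from the self-contractions of $A_1$ and the mixed contractions of $A_1$ with $A_{\bar1}$ recorded in the proof of Lemma~\ref{lem:EFcont}, so that the induction closes on the stated formula.
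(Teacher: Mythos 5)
Your skeleton --- bilinearity, converting the simple poles of the contractions in Lemma~\ref{lem:EFcont} into delta functions, the cancellation count leaving poles at $w/z=q^{2m}s_1$ for $m=1,\dots,|\mu|-|\nu|$ (and its bar-conjugate), and the observation that the $\Lambda'$, $\bar\Lambda'$ dressings assemble at $w=q^{2(n-k)}s_1z$ into exactly those of $T_k(q^{n-k}s_1z)$ --- coincides with the paper's proof and is correct. The genuine gap is at the step you yourself flag as ``the hard part'': the identification of the residue with $a_{n,k}V_{\chi_{k,1}}(q^{n-k}s_1z)$ is left as a plan (a term-by-term match of weighted residues against the coefficients $c^{\chi}_{i_1,\ldots,i_k}$ of \eqref{cchi}, organized by induction on $n$ through $E=E^0+E^1$), and that plan is both uncarried-out and more delicate than you suggest: the cross terms $[E^0,F^1]$ and $[E^1,F^0]$ feed every pole $k$, not just the top one, and the $Y$ currents and $\Lambda'$ for different $n$ cannot be literally identified (only their contractions stabilize), so ``$[E^0,F^0]$ reproduces the $\gl(2n-2|1)$ commutator'' needs real justification.

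The paper closes this step without computing a single relative coefficient. It first constructs an explicit bijection between the pairs $(\mu,\nu)$ with $(n-|\mu|)+|\nu|\le k$ and the column tableaux $i_1\preceq\cdots\preceq i_k$, and verifies --- starting from the top pair $\mu=(0,\ldots,0,1,\ldots,1)$, $\nu=(0,\ldots,0)$ and tracking how multiplication by the $A_i^{-1}$, $A_{\bar i}^{-1}$ acts on both sides --- that $:E_\mu(q^{-n+k}s_1^{-1}z)F_\nu(q^{n-k}z):$ equals $:M_{i_1,q^{k-1}}(z)\cdots M_{i_k,q^{-k+1}}(z):$, so the residue is \emph{some} linear combination of precisely the monomials of $V_{\chi_{k,1}}(z)$. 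Then, since $E(z)$ and $F(w)$ formally commute with all screening operators, so does $[E(z),F(w)]$, hence so does each delta-function coefficient $T_k$; by the uniqueness up to an overall scalar of screening-commuting linear combinations (stated at the end of Subsection~\ref{subsec:boson}), this combination must be proportional to $V_{\chi_{k,1}}(z)$, and a single contributing pair fixes $a_{n,k}$. If you supply the explicit bijection and replace your combinatorial collapse by this screening argument, the proof is complete; as written, the decisive coefficient identification is not established.
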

\begin{proof}
By Lemma \ref{lem:EFcont}, $E_\mu(z)F_\nu(w)$ has only simple poles located at 
\begin{align*}
&z=q^{2i}s_1 w,\quad 1\le i\le |\nu|-|\mu|\quad \text{for $|\mu|<|\nu|$}\,,\\
&z=q^{-2i}s_1^{-1} w,\quad 1\le i\le |\mu|-|\nu|\quad \text{for $|\mu|>|\nu|$}\,.
\end{align*}
The products $E(z)F(w)$ and $F(w)E(z)$ coincide as rational functions. 
Therefore their commutator has the stated form with some currents $T_k(z),\bar{T}_k(z)$. 

We fix $k\in \{0,\ldots,n-1\}$ and compute $T_k(z)$. Lemma \ref{lem:EFcont} implies that, 
for given $\mu,\nu$, $E_\mu(z)F_\nu(w)$ has a pole at $z=q^{-2n+2k}s_1^{-1} w$ only if 
\begin{align}
(n-|\mu|)+|\nu|\le k\,.\label{munu}
\end{align}
Let $r=n-|\mu|$, $s=k-|\nu|\ge r$.
To a pair $(\mu,\nu)$ satisfying \eqref{munu}, we associate 
$i_1,\ldots,i_k \in \{n,\ldots,1,0,\bar1,\ldots,\bar n\}$, $i_1\preceq\ldots\preceq i_k$, 
by the following rule: 
\begin{align*}
&\{i\mid \mu_i=0\}=\{i_1,\ldots,i_r\}\,,\quad n\preceq i_1\prec\ldots\prec i_r\preceq 1\,,\\
&\{\bar i\mid \nu_i=1\}=\{i_{s+1},\ldots,i_k\}\,,\quad \bar 1\preceq i_{s+1}\prec\ldots\prec i_k\preceq \bar n\,,\\
&\text{$0$ appears $s-r$ times in $\{i_1,\ldots,i_k\}$}.
\end{align*}
This gives a bijection between the set of $(\mu,\nu)$ satisfying \eqref{munu} and the set of
tableaux of a column with $k$ boxes. 
We claim that, under this bijection, 
$:E_\mu(q^{-n+k}s_1^{-1}z)F_\nu(q^{n-k}z):$ corresponds to $:M_{i_1,q^{k-1}}(z)\cdots M_{i_k,q^{-k+1}}(z):$. 

When $\mu=(0,\ldots,0,1,\ldots,1)$ and $\nu=(0,\ldots,0)$, 
we have 
\begin{align*}
:E_{0,\ldots,0,1,\ldots,1}(q^{-n+k}s_1^{-1}z)F_{0,\ldots,0}(q^{n-k}z):=
:\Lambda'(q^{-n+k}s_1^{-1}z)\bar\Lambda'(q^{n-k}z)Y_{n-k+1}(z):\,,
\end{align*}
which corresponds to the top term of $V_{\chi_{k,1}}(z)$. 

In general, suppose that 
$E_{\mu'}(q^{-n+k}s_1^{-1}z)=:E_\mu(q^{-n+k}s_1^{-1}z)A^{-1}_{i}(az):$. Then 
$\mu_i=0$, $\mu'_i=1$ with some $i=i_p$, $1\le p\le r$, and 
$a=q^{-n+k}s_1^{-1}\cdot q^{2(n-i-p+1)+i}s_1$. 
One can check that $M_{p-1,q^{k-2p+1}}(z)=:M_{p,q^{k-2p+1}}(z)A^{-1}_i(az):$. 
Likewise, if $F_{\nu'}(q^{-n+k}z)=:F_\nu(q^{-n+k}z)A^{-1}_{\bar i}(bz):$, then 
$\nu_i=0$, $\nu'_i=1$ with some $i=i_p$, $s<p\le k$, and 
$M_{p-1,q^{k-2p+1}}(z)=:M_{p,q^{k-2p+1}}(z)A^{-1}_{\bar i}(bz):$. 

It follows that $T_k(z)=:\Lambda'(q^{-n+k}s_1^{-1}z)\bar\Lambda'(q^{n-k}z) V_k(z):$
where $V_k(z)$ is a linear combination of terms occurring in $V_{\chi_{k,1}}(z)$. 
Since $T_k(z)$ formally commutes with all screening operators, $V_k(z)$ coincides with
 $V_{\chi_{k,1}}(z)$ up to an overall scalar multiple. 

The calculation of $\bar{T}_k(z)$ is entirely similar.
\end{proof}

In order to discuss commutation relations between $T_k(z)$'s and $E(z)$, $F(z)$, 
it is more convenient to change the rationalizations of $V_{\chi_{k,1}}$. 
We focus to the case $k=1$ and introduce 
\begin{align}
T(z)=:W'(z)V_{\chi_{1,1}}(z):=\sum_{i\in I}T_i(z)\,,\label{Tmodify}
\end{align}
such that 
\begin{align*}
&\cont{T_n(z)}{\Lambda(w)}=\cont{T_n(z)}{\bar\Lambda(w)}=1\,,
\quad \cont{\Lambda(w)}{T_n(z)}=\cont{\bar\Lambda(w)}{T_n(z)}=1\,.
\end{align*}

\begin{lem}\label{lem:TE} 
For $\nu=(\nu_n,\ldots,\nu_1)\in\{0,1\}^n$ and $1\le i\le n$,  we have
\begin{align}
&\cont{T_i(z)}{E_{\nu}(w)}=
\begin{cases}
1 & \text{if $\nu_i=0$},\\
p(q^{-n+2\sum_{j=i+1}^n\nu_j+2i}s_1w/z) &\text{if $\nu_i=1$}\,,
\end{cases}
\label{TE1}
\\
&\cont{T_{0}(z)}{E_\nu(w)}=s_3^{-2}\frac{z-q^{-n+2\sum_{j=1}^m\nu_j-1}s_1^{-1}w}
{z-q^{-n+2\sum_{j=1}^m\nu_j-1}s_1w}\,,
\label{TE2}\\
&\cont{T_{\bar i}(z)}{E_\nu(w)}=1\,,
\label{TE3}
\end{align}
where
\begin{align*}
p(z)=\frac{1-q^{-1}s_1^{-2}x}{1-q^{-1}x}\frac{1-q s_1^2x}{1-qx}\,.
\end{align*}
\qed
\end{lem}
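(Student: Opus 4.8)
The plan is to reduce every contraction of $T_i(z)$ with $E_\nu(w)$ to contractions of \emph{root currents}, for which the formulas of the excerpt give $A_j(z)Y_k(w)\mapsto1$ when $j\ne k$ and a rational value when $j=k$, so that all non-rational $Y$--$Y$ contractions are bypassed. Two structural facts drive this (I write $\langle\,\cdot\,\rangle$ for the contraction of the two vertex operators in the displayed order). First, the auxiliary bosons $\Lambda',\bar\Lambda',W'$ commute with every screening current $S_j$, hence with every root current $A_j(w)=\,:S_j(s_3^{-1}w)/S_j(s_3w):$; thus they contribute trivially, $\langle W'(z)A_j(w)\rangle=1$, against any product of root currents. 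Second, $W'$ was chosen precisely so that $\langle T_n(z)\Lambda(w)\rangle=\langle T_n(z)\bar\Lambda(w)\rangle=1$.

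First I would write both currents as a base current times an explicit product of inverse root currents. Using the monomial identity $\tilde\xi_\nu=\tilde\xi_{0,\ldots,0}\prod A_{j,\cdot}^{-1}$ (read off recursively from \eqref{rec}, equivalently from Lemma \ref{xi explicit lem}) together with $E_{0,\ldots,0}=\Lambda$, one gets $E_\nu(w)=\,:\Lambda(w)P_\nu(w):$, where $P_\nu$ is the product of the inverse roots $A_j^{-1}$, $j\in\{1,\ldots,n\}$, met along the path from the top term to $\tilde\xi_\nu$ in Figure \ref{char gl(2n|1)}. Likewise $M_{i,1}(z)=\,:Y_n(z)R_i(z):$ with $R_i$ the product of inverse roots along the path $n\to i$ in Figure \ref{one box pic}, whence $T_i(z)=\,:T_n(z)R_i(z):$. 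Expanding the four-fold contraction and using $\langle T_n\Lambda\rangle=1$, $\langle W'A_j\rangle=1$, and $\langle T_n(z)P_\nu(w)\rangle\langle R_i(z)P_\nu(w)\rangle=\langle M_{i,1}(z)P_\nu(w)\rangle$, the whole contraction collapses to
\[
\langle T_i(z)E_\nu(w)\rangle=\langle M_{i,1}(z)P_\nu(w)\rangle\cdot\langle R_i(z)\Lambda(w)\rangle.
\]

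Both factors are then evaluated by the localization $A_jY_k\mapsto1$ for $j\ne k$. For the second factor, $A_j$ meets $\Lambda$ nontrivially only through its $Y_1$-- and $Y_{\bar1}$--parts, i.e. only for $j=1,\bar1$. Since $R_i$ with $1\le i\le n$ contains neither, $\langle R_i\Lambda\rangle=1$, so for these $i$ only the first factor survives, yielding \eqref{TE1}. For $i=0$ the path $R_0$ contains $A_{1,q^n}^{-1}$ but not $A_{\bar1}$, producing one extra ratio that, after simplification, supplies the prefactor $s_3^{-2}$ and the pole of \eqref{TE2}. For barred $\bar i$, $R_{\bar i}$ contains \emph{both} $A_{1,q^n}^{-1}$ and $A_{\bar1,q^ns_1}^{-1}$, and because node $\bar1$ is fermionic their $Y_1$-- and $Y_{\bar1}$--contributions to $\langle R_{\bar i}\Lambda\rangle$ are mutually inverse and cancel, so $\langle R_{\bar i}\Lambda\rangle=1$. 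For the first factor with $1\le i\le n$, $M_{i,1}$ involves only $Y_i,Y_{i+1}$ (only $Y_n$ when $i=n$), so only the $A_i^{-1},A_{i+1}^{-1}$ occurring in $P_\nu$ contribute; a short bosonic computation shows $\langle M_{i,1}A_i^{-1}\rangle$ and $\langle M_{i,1}A_{i+1}^{-1}\rangle$ are mutually inverse values of $p$, so the roots $A_i,A_{i+1}$ cancel pairwise along the path, leaving a single surviving $A_i^{-1}$ exactly when $\nu_i=1$; tracking its spectral parameter gives the shift $q^{-n+2\sum_{j>i}\nu_j+2i}s_1$ and hence $p$ at that argument. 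For barred $\bar i$ the first factor is $1$, since $M_{\bar i,1}$ uses only barred $Y$'s while $P_\nu$ uses only unbarred roots, completing \eqref{TE3}.

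The main obstacle is the bookkeeping inside $\langle M_{i,1}P_\nu\rangle$: one must check that $A_i,A_{i+1}$ genuinely alternate along the path so that the pairwise $p\cdot p^{-1}$ cancellation telescopes to exactly one factor, and that its spectral parameter matches the stated exponent. I would organize this by induction on $n$, mirroring the recursion \eqref{rec} and the $E^0/E^1$ splitting used for Theorem \ref{prop:EE}, so that passing from $n-1$ to $n$ only appends $A_n$ and rescales shifts by $\tau_{q^2}$. The two-factor shape of $p$ comes from the \emph{bosonic}-node contraction $A_jY_j$ (a product of two factors, unlike the fermionic case). The residual $i=0$ computation, combining the $|\nu|$-dependent $A_1$--content of $P_\nu$ with the extra ratio $\langle R_0\Lambda\rangle$, is then a direct simplification yielding \eqref{TE2}.
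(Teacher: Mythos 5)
Your proposal is correct and takes essentially the same route as the paper: both reduce every contraction to contractions with the inverse root currents, use the triviality of the $A_j$--$Y_k$ contraction for $j\neq k$, and isolate the single unpaired $A_i^{-1}$ (present exactly when $\nu_i=1$) through the pairwise cancellation of $A_i^{-1}(aw)A_{i+1}^{-1}(qaw)$ against $T_i(z)$. Your handling of \eqref{TE2} and \eqref{TE3} via the factor $\langle R_i(z)\Lambda(w)\rangle$ is just a repackaging of the paper's observation that $T_0(z)=\,:T_1(z)A_1^{-1}(q^nz):$ and $T_{\bar1}(z)=\,:T_0(z)A_{\bar1}^{-1}(q^ns_1z):$.
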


In particular, $T(z)E(w)$ has at most simple poles at $z=q^{-n+2j+1}s_1w$, $j=0,\ldots,n$.
\begin{proof}
It is easy to check that 
\begin{align*}
\cont{T_i(z)}{E_{0,\ldots,0}(w)}=1\,\quad \text{for $1\le i\le n$}.
\end{align*}
Starting from this and using 
\begin{align*}
&\cont{T_i(z)}{A_j^{-1}(w)}=1 \quad \text{for $j\neq i,i+1$},\quad
\cont{T_i(z)}{A_i^{-1}(w)}=p(q^{-n+1}w/z)\,,\\
&\cont{T_i(z)}{A_i^{-1}(w)A_{i+1}^{-1}(qw)}=1\,,
\end{align*}
we obtain \eqref{TE1}. The relations \eqref{TE2}, \eqref{TE3} are derived from \eqref{TE1}, noting 
$T_0(z)=:T_1(z)A^{-1}_{1}(q^nz):$ and $T_{\bar 1}(z)=:T_0(z)A^{-1}_{\bar 1}(q^ns_1z):$. 
\end{proof}

\begin{thm}\label{lem:TETF}
The following relations hold:
\begin{align}
&[T(z),E(w)]=-qa\delta(q^{n+1}s_1w/z):W'(q^{n+1}s_1w)\frac{\Lambda'(w)}{\Lambda'(q^2w)}E(q^2w):\,,
\label{TEcom}
\\
&[T(z),F(w)]=s_3a\delta(q^{-n-1}w/z):W'(q^{-n-1}w)\frac{\Lambda'(w)}{\Lambda'(q^{-2}w)}F(q^{-2}w):\,,
\label{TFcom}
\end{align}
where $a=(s_1-s_1^{-1})(s_3-s_3^{-1})/(q-q^{-1})$.
\end{thm}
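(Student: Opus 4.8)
The plan is to deduce both relations directly from the contractions recorded in Lemma~\ref{lem:TE} by localizing the commutators at the poles of the two-point functions. Since every $\cont{T_i(z)}{E_\nu(w)}$ is rational, the products $T(z)E(w)$ and $E(w)T(z)$ are two expansions of one and the same rational function, so $[T(z),E(w)]$ is a finite sum of delta functions supported at the poles listed after Lemma~\ref{lem:TE}, namely $z=q^{-n+2j+1}s_1w$ for $j=0,\ldots,n$. Writing $T(z)=\sum_i T_i(z)$ and $E(w)=\sum_\nu c^\xi_\nu E_\nu(w)$, I would first catalogue, for each pole position $j$, which pairs $(i,\nu)$ contribute: by \eqref{TE1} the factor $p$ supplies a pole of $\cont{T_i(z)}{E_\nu(w)}$ at position $j$ from one of its two denominator factors precisely when $\nu_i=1$ and $\sum_{l>i}\nu_l+i\in\{j,j+1\}$, while by \eqref{TE2} the node $T_0$ contributes exactly when $|\nu|=j$.

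The heart of the argument is to show that the residue currents at all intermediate positions $0\le j\le n-1$ vanish. The mechanism, already visible for $n=1$, is that the contributions at a fixed $j$ organize into cancelling groups whose underlying operators agree. Concretely, using $T_0(z)=\,:T_1(z)A_1^{-1}(q^nz):$ together with the fact that the root currents $A_i(z)$ shift one monomial of $\xi$ into the next, one checks that the normal-ordered operators $:T_i(cz)E_\nu(w):$ occurring at a given pole $z=cw$ coincide once the scalar prefactors are stripped off. What remains is a purely scalar identity: the residues of $p$ at $x=q^{\pm1}$ (and of the rational factor in \eqref{TE2}), weighted by the coefficients $c^\xi_\nu$ of \eqref{Vxi} and by the coefficients of $V_{\chi_{1,1}}$ carried by the $T_i$, must add up to zero. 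For example, the $n=1$ cancellation at $z=s_1w$ reduces, after identifying $:T_1(s_1w)E_1(w):$ with $:T_0(s_1w)E_0(w):$, to the relation $q(q-q^{-1})/(1-q^2)=-1$, which matches the residue of $p$ against the factor $(s_1-s_1^{-1})/(q-q^{-1})$ in the coefficient of $M_{0,1}$. I expect this to be the main obstacle, since for general $n$ it requires a careful bookkeeping of the three-way pairing of contributions and a verification that the weighted residues telescope to zero at every intermediate pole.

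Once the intermediate poles are eliminated, only $j=n$ survives, fed by the poles with $\nu_i=\cdots=\nu_n=1$ together with the $T_0$ term with $\nu=(1,\ldots,1)$. Evaluating the residue at $z=q^{n+1}s_1w$ and using the second line of \eqref{E100}, which rewrites the ratio $:E_{1,\nu}(w)/E_{0,\nu}(q^2w):$ in terms of $\Lambda'$ and $Y_n$, I would assemble the normal-ordered product into $:W'(q^{n+1}s_1w)\,\Lambda'(w)\Lambda'(q^2w)^{-1}E(q^2w):$, the shift $w\mapsto q^2w$ reflecting the $\tau_{q^2}$ in the recursion for $\xi$. The residue bookkeeping then fixes the overall constant $-qa$ with $a=(s_1-s_1^{-1})(s_3-s_3^{-1})/(q-q^{-1})$: the factors $s_1-s_1^{-1}$ and $s_3-s_3^{-1}$ arise from the residue $\mathrm{Res}_{x=q^{-1}}p(x)=q^{-1}a$, while the $1/(q-q^{-1})$ and the remaining powers of $q$ come from the coefficients in $V_{\chi_{1,1}}$ and from the pole location. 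This proves \eqref{TEcom}.

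For \eqref{TFcom} I would run the identical computation with the companion contraction lemma obtained from Lemma~\ref{lem:TE} under the involution $i\leftrightarrow\bar i$, which governs $\cont{T_i(z)}{F_\nu(w)}$. Again all poles but one cancel by the same pairing, the surviving one sitting at $z=q^{-n-1}w$ and producing $F(q^{-2}w)$ dressed by $:W'(q^{-n-1}w)\Lambda'(w)\Lambda'(q^{-2}w)^{-1}:$; the overall constant works out to $s_3a$ by the same residue analysis.
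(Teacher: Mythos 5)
Your overall route coincides with the paper's: expand $[T(z),E(w)]$ as a sum of delta functions at the candidate poles $z=q^{-n+2j+1}s_1w$, show that all intermediate residues cancel, and identify the surviving residue at $z=q^{n+1}s_1w$ with the normal-ordered right-hand side via the shift identities $\bs{i}_{q^{n+1}s_1}\cdot\tilde\xi_{1,\ldots,1,\nu_{i-1},\ldots,\nu_1}=\tau_{q^2}(\tilde\xi_{1,\ldots,1,0,\nu_{i-1},\ldots,\nu_1})$. Your pole catalogue from Lemma \ref{lem:TE} and the identification of the relevant pole of $p$ at $x=q^{-1}$ are correct, and your $n=1$ cancellation check is the right model case.

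The gap is exactly where you flag it: you leave the cancellation of the weighted residues at all $n$ intermediate poles as an unexecuted ``telescoping'' computation, and as stated that is a genuinely heavy bookkeeping problem. The paper disposes of it with two devices you do not use. First, because the contractions of the $E_\nu$ and $T_i$ stabilize in $n$ (the $\gl(2n-2|1)$ currents sit inside the $\gl(2n|1)$ ones via $E=E^0+E^1$), one can induct on $n$ and only needs to kill the single new pole at $z=q^{n-1}s_1w$; there the cancellation is exhibited by the explicit pairing $(\bs{n-k})_{q^{n-k}s_1}\cdot\tilde\xi_{1^{(l)},0,\nu}=(\bs{n-l})_{q^{n-1}s_1}\cdot\tilde\xi_{1^{(k)},0,1^{(l-k)},\nu}$, which matches contributions in pairs rather than requiring a global telescoping identity. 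Second, at the top pole the paper does not verify the $2^n$ residue coefficients one by one: once the residue is known to be a linear combination of the same normal-ordered monomials as $:W'(q^{n+1}s_1w)\Lambda'(w)\Lambda'(q^2w)^{-1}E(q^2w):$, formal commutativity with the screening operators forces proportionality, so only one overall scalar (e.g.\ the coefficient of the $\nu=(1,\ldots,1)$ term) needs to be computed to land on $-qa$. Incorporating these two reductions would turn your outline into a complete proof; without them you would still need to supply the general-$n$ scalar identities you acknowledge as the main obstacle.
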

\begin{proof}
We shall consider only \eqref{TEcom}, since \eqref{TFcom} is quite similar.

It follows from Lemma \ref{lem:TE} that terms in $T(z)E(W)$ which have poles 
 at $z=q^{n+1}s_1w$ are of the form 
$T_i(z)E_{1,\ldots,1,\nu_{i-1},\ldots,\nu_1}(w)$, $i=1,\ldots,n$, or $T_0(z)E_{1,\ldots,1}(w)$. 
On the other hand, from \eqref{xi-explicit} we obtain 
\begin{align*}
&\bs{i}_{q^{n+1}s_1}\cdot \tilde\xi_{1,\ldots,1,1,\nu_{i-1},\ldots,\nu_1}
=\tau_{q^2}\bigl(\tilde\xi_{1,\ldots,1,0,\nu_{i-1},\ldots,\nu_1}\bigr)\,,
\quad i=1,\ldots,n\,,
\\
&\bs{0}_{q^{n+1}s_1}\cdot \tilde\xi_{1,\ldots,1}
=\tau_{q^2}\bigl(\tilde\xi_{1,\ldots,1}\bigr)\,.
\end{align*}
Therefore the residue of $T(z)E(w)$ at $z=q^{n+1}s_1w$ consists of the same terms as the
normal-ordered expression in the right hand side of \eqref{TEcom}. Commutativity with screening operators then forces
that they are proportional.

It remains to show that the other poles 
$z=q^{-n+2j+1}s_1w$, $j=0,\ldots,n-1$, 
are absent in the product $T(z)E(w)$.
By induction on $n$, it is enough to check that the residues at $z=q^{n-1}s_1w$ pairwise cancel out. 
To see that, we use the relations
\begin{align*}
&(\bs{n-k})_{q^{n-k}s_1}\cdot \tilde\xi_{1^{(l)},0,\nu}
=(\bs{n-l})_{q^{n-1}s_1}\cdot \tilde\xi_{1^{(k)},0,1^{(l-k)},\nu}\,,
\end{align*}
for all $k,l$ with $0\le k<l\le n$ and $\nu\in \{0,1\}^{n-l-1}$, where $1^{(a)}=\overbrace{1,\ldots,1}^a$. 

\end{proof}

\medskip

\begin{rem}\label{remV}
The commutation relations derived in Theorems \ref{prop:EE}, \ref{[EF] lem} can be rewritten as follows.
Set $\alpha=n/(2n-(2n-1)\gamma)$ and introduce 
\begin{align*}
&f_{\xi,\xi}(z,w)=(s_1z^2)^{\alpha}\exp\Bigl(-\sum_{r>0}\frac{1}{r}\bigl(\frac{w}{z}\bigr)^r
\frac{q^{-r}s_1^{-r}-q^rs_1^r}{q^{2nr}s_1^r-q^{-2nr}s_1^{-r}}\frac{q^{2nr}-q^{-2nr}}{q^r-q{-r}}\Bigr)\,,
\\
&f_{\xi,\eta}(z,w)=(s_1z^2)^{-\alpha}\exp\Bigl(\sum_{r>0}\frac{1}{r}\bigl(\frac{w}{z}\bigr)^r
\frac{q^{-r}s_1^{-r}-q^rs_1^r}{q^{2nr}s_1^r-q^{-2nr}s_1^{-r}}\frac{q^{nr}-q^{-nr}}{q^r-q{-r}}(q^{nr}s_1^r+q^{-nr}s_1^{-r})\Bigr)\,.
\end{align*}
Then 
\begin{align*}
&f_{\xi,\xi}(z,w)V_\xi(z)V_{\xi}(w)=f_{\xi,\xi}(w,z)V_{\xi}(w)V_\xi(z)\,,\\
&f_{\xi,\eta}(z,w)V_\xi(z)V_{\eta}(w)-f_{\eta,\xi}(w,z)V_{\eta}(w)V_\xi(z)\\
&=\sum_{k=0}^ns_3^{-1}a_{n,k}\delta\bigl(a^{2n-2ks_1z/w}\bigr)V_{\chi_{k,1}}(z)
+\sum_{k=0}^ns_3a_{n,k}\delta\bigl(a^{2n-2ks_1w/z}\bigr)V_{\chi_{\bar k,1}}(w)\,.
\end{align*}
\end{rem}
\qed

\subsection{Extended algebra for $\osp(2|2n)$}\label{boson osp sec}

The case $\mathfrak{osp}(2|2n)$ is very similar to that of $\gl(2n|1)$. 
We give only the relevant formulas and state the results. 

The Cartan matrix is indexed by $I=\{n,\ldots,1,\bar1\}$.
The non-zero entries of  the matrix $K$ are 
\begin{itemize}
\item $K_{i,i}=-2\gamma$ for $i\neq 1,\bar1$ and  $K_{1,1}=K_{\bar1,\bar1}=-1$,
\item $K_{i,i\pm1}=\gamma$, $i\neq 1$, $K_{2,\bar1}=K_{\bar1,2}=\gamma$,
and $K_{1,\bar1}=K_{\bar1,1}=1-2\gamma$,
\end{itemize}
and $\det K=4(-\gamma)^{n}(\gamma-1)\neq 0$. 
For example, for $n=4$ we have 
\begin{align}\label{osp(2|8)K}
K_{\mathfrak{osp}(2|8)}=\begin{pmatrix}
-2\gamma & \gamma & 0 & 0 & 0  \\
\gamma & -2\gamma & \gamma & 0 & 0 \\
0 & \gamma & -2\gamma & \gamma & \gamma  \\
0 & 0 & \gamma &-1 & 1-2\gamma \\
0& 0 & \gamma & 1-2\gamma & -1 \\
\end{pmatrix}.
\end{align} 

The bosonization of the $qq$-character $\chi_{1,1}$ given in Lemma \ref{xi explicit lem}, reads
\begin{align*}
V_{\chi_{1,1}}(z)=\sum_{j=1}^n q^{2j-1}s_1 M_{i,1}(z)+\frac{s_1-s_1^{-1}}{q-q^{-1}}
\bigl(M_{0,1}(z)+M_{\bar{0},1}(z)\bigr)+\sum_{j=1}^n q^{-2j+1}s_1^{-1} M_{\bar j,1}(z)\,.
\end{align*}

The definition of $\Lambda(z)$, $\bar\Lambda(z)$  are
changed to  
\begin{align*}
\Lambda(z)=:\Lambda'(z)\frac{Y_1(z)}{Y_{\bar1}(s_1^2z)}:\,,\quad
\bar\Lambda(z)=:\bar\Lambda'(z)\frac{Y_{\bar1}(z)}{Y_1(s_1^2z)}:\,.
\end{align*}
The contractions \eqref{LaLa}, \eqref{LaS}
stay the same except 
\begin{align*}
\cont{\Lambda(z)}{S_{\bar{1}}(w)}=-\cont{S_{\bar{1}}(w)}{\Lambda(z)}=s_1^2z-w 
\end{align*}
and the same relation with $1\leftrightarrow \bar{1}$, 
$\Lambda(z)\leftrightarrow \bar{\Lambda}(z)$ interchanged.

The current 
$E(z)$ corresponding to $qq$-character $\xi_{1}$ given in Lemma \ref{xi osp explicit lem} is 
defined by the same formulas \eqref{Vxi}, \eqref{E(z)}. 
We also consider a rationalization \eqref{Tmodify} of $V_{\chi_{1,1}}(z)$.

The $EE$, $FF$ and $TE$, $TF$ relations are proved in the same way as in the case of $\gl(2n|1)$.
\begin{thm}\label{prop:EE osp}
The product $E(z)E(w)$ (resp.  $F(z)F(w)$)
is regular except for a simple pole at $z=s_3^2w$ (resp.  $z=s_3^{-2}w$). 
We have the quadratic relations
\begin{align}
&(z-s_3^2w)E(z)E(w)+(w-s_3^2z)E(w)E(z)=0\,,\label{EE-osp}\\
&(z-s_3^{-2}w)F(z)F(w)+(w-s_3^{-2}z)F(w)F(z)=0\,.\label{FF-osp}
\end{align}
\qed
\end{thm}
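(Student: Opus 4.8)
The plan is to run the \emph{same} induction on $n$ as in the proof of Theorem~\ref{prop:EE}, observing that every structural input used there survives the passage from $\gl(2n|1)$ to $\osp(2|2n)$. First I would record the two facts that make the two cases parallel. The $qq$-character $\xi^{(n)}$ obeys the same recursion \eqref{rec}, so that $E(z)=E^0(z)+E^1(z)$, where $E^0(z)$ is a rationalization of $V_{\xi^{(n-1)}}(z)$ for $\osp(2|2(n-1))$ and $E^1(z)$ carries the extra factor $Y_n(q^{n+1}s_1z)^{-1}$; and the contractions \eqref{LaLa} of $\Lambda,\bar\Lambda$ are imposed to be identical to the $\gl(2n|1)$ case. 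Together with the identity $:E_{1,\nu}(w)E_{0,\nu}(q^2w)^{-1}:\,=\,:\Lambda'(w)\Lambda'(q^2w)^{-1}Y_n(q^{n+1}s_1w)^{-1}:$, which is a formal consequence of \eqref{rec} and hence unchanged, these are precisely the ingredients entering Lemma~\ref{lem:E0E1}.

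The only place where the $\osp$ data genuinely enter is the single contraction $\cont{\Lambda(z)}{A_1^{-1}(qs_1w)}$ appearing in the proof of Lemma~\ref{lem:E0E1}. I would verify that it still equals $\tfrac{z-s_3^{-2}w}{z-w}$, despite the modified definition $\Lambda(z)=\,:\Lambda'(z)Y_1(z)Y_{\bar1}(s_1^2z)^{-1}:$ and the new relation $\cont{\Lambda(z)}{S_{\bar1}(w)}=s_1^2z-w$. Writing $A_1^{-1}(w)=\,:S_1(s_3w)S_1(s_3^{-1}w)^{-1}:$ and using that both $\Lambda'(z)$ and the factor $Y_{\bar1}(s_1^2z)^{-1}$ commute with $S_1$, one sees that only the fermionic color-$1$ factor $Y_1(z)$ contributes; hence the contraction is insensitive to the replacement $s_1\to s_1^2$ on the $\bar1$ side and reproduces the $\gl(2n|1)$ value verbatim. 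Consequently the three contraction formulas \eqref{E1E1}, \eqref{E0E1}, \eqref{E1E0} hold unchanged for $\osp(2|2n)$.

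With the analog of Lemma~\ref{lem:E0E1} in hand, the induction of Theorem~\ref{prop:EE} applies line by line. For the base case $n=1$ (that is, $\osp(2|2)$) one checks directly from \eqref{LaLa} and the modified $\bar1$-contractions that $(z-s_3^2w)E(z)E(w)$ is regular and skew-symmetric. For the inductive step one uses the skew-symmetry of $(z-s_3^2w)E^0(z)E^0(w)$ and $(z-s_3^2w)E^1(z)E^1(w)$ furnished by the hypothesis, together with the cross terms $(z-s_3^2w)E^0(z)E^1(w)$ and $(z-s_3^2w)E^1(z)E^0(w)$ evaluated via \eqref{E0E1} and \eqref{E1E0}, to conclude that $(z-s_3^2w)E(z)E(w)$ is pole-free and that \eqref{EE-osp} holds. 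The relation \eqref{FF-osp} for $F(z)$ then follows by the symmetry $1\leftrightarrow\bar1$, $\Lambda\leftrightarrow\bar\Lambda$, $A_i\leftrightarrow A_{\bar i}$.

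Rather than a genuine obstacle, the point requiring the most care is the bookkeeping in the inductive step: one must confirm that the $\osp$ affine roots $A_1,A_2$, which really do differ from their $\gl(2n|1)$ counterparts, never enter the computation except through the robust contraction $\cont{\Lambda(z)}{A_1^{-1}(qs_1w)}$ isolated above. Since the whole induction invokes only Lemma~\ref{lem:E0E1} and the contractions \eqref{LaLa}, and no other explicit root contraction, this is automatic, and the modified roots cannot create any new pole in $E(z)E(w)$.
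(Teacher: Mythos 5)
Your proposal is correct and follows the same route as the paper, which proves Theorem~\ref{prop:EE osp} simply by asserting that the argument of Theorem~\ref{prop:EE} carries over verbatim. You in fact do slightly more than the paper by isolating and checking the one contraction, $\cont{\Lambda(z)}{A_1^{-1}(qs_1w)}=\frac{z-s_3^{-2}w}{z-w}$, where the modified $\osp(2|2n)$ data could conceivably have interfered.
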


\medskip 

\begin{thm}\label{lem:TETF osp}
We have the relations
\begin{align*}
&[T(z),E(w)]=
-aq\delta(q^{n+1}s_1w/z):W'(q^{n+1}s_1w)\frac{\Lambda'(w)}{\Lambda'(q^2w)}E(q^2w):\\
&\quad +a
s_3\delta(q^{-n-1}s_1w/z):W'(q^{-n-1}s_1w)\frac{\Lambda'(w)}{\Lambda'(q^{-2}w)}E(q^{-2}w):
\,,\\
&[T(z),F(w)]=
-aq\delta(q^{n+1}s_1w/z):W'(q^{n+1}s_1w)\frac{\Lambda'(w)}{\Lambda'(q^2w)}F(q^2w):\\
&\quad
+as_3\delta(q^{-n-1}s_1w/z):W'(q^{-n-1}s_1w)\frac{\Lambda'(w)}{\Lambda'(q^{-2}w)}F(q^{-2}w):
\end{align*}
with $a=(s_1-s_1^{-1})(s_3-s_3^{-1})/(q-q^{-1})$.
\qed
\end{thm}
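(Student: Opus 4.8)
The plan is to reproduce the architecture of the proof of Theorem~\ref{lem:TETF}, the essential new phenomenon being that the $1\leftrightarrow\bar1$ symmetry of the $\osp(2|2n)$ diagram forces each commutator to carry \emph{two} delta-function terms instead of one. In the $\gl(2n|1)$ case the fermionic pair sits asymmetrically inside a type-$A$ chain, the single central filling $0$ of the vector $qq$-character links $T(z)$ to $E(w)$ at the pole $z=q^{n+1}s_1w$ and to $F(w)$ at the pole $z=q^{-n-1}w$, and only one term survives in each bracket. Here the vector $qq$-character $\chi_{1,1}$ of \eqref{one box osp} has \emph{both} a $0$-filling and a $\bar0$-filling; by the symmetry these produce a symmetric pair of poles, at $z=q^{n+1}s_1w$ and at $z=q^{-n-1}s_1w$, in each of $[T(z),E(w)]$ and $[T(z),F(w)]$.

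First I would establish the $\osp$ analog of Lemma~\ref{lem:TE}, computing the contractions $\cont{T_i(z)}{E_\nu(w)}$ from the root currents $A_{i,1}$ recorded after \eqref{osp cartan} and the modified operators $\Lambda(z)=:\Lambda'(z)Y_1(z)Y_{\bar1}(s_1^2z)^{-1}:$, using the changed contraction $\cont{\Lambda(z)}{S_{\bar1}(w)}=s_1^2z-w$. This localizes all candidate singularities of $T(z)E(w)$ to simple poles at the finitely many points $z=q^{\pm(n+1)}s_1w$ and the values strictly between them.

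Next I would extract the two residues. At $z=q^{n+1}s_1w$ the computation is verbatim the $\gl(2n|1)$ one: the contributing terms are $T_i(z)E_{1,\ldots,1,\nu_{i-1},\ldots,\nu_1}(w)$, $1\le i\le n$, together with $T_0(z)E_{1,\ldots,1}(w)$, and the recursion \eqref{rec}, now solved with the $\osp$ initial data of Lemma~\ref{xi osp explicit lem}, yields $\bs i_{q^{n+1}s_1}\tilde\xi_{1,\ldots,1,1,\nu_{i-1},\ldots,\nu_1}=\tau_{q^2}(\tilde\xi_{1,\ldots,1,0,\nu_{i-1},\ldots,\nu_1})$, so the residue is proportional to $:W'(q^{n+1}s_1w)\Lambda'(w)\Lambda'(q^2w)^{-1}E(q^2w):$. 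Applying the $1\leftrightarrow\bar1$ symmetry to the entire computation (interchanging $T_0$ with $T_{\bar0}$) gives the mirror residue at $z=q^{-n-1}s_1w$, proportional to $:W'(q^{-n-1}s_1w)\Lambda'(w)\Lambda'(q^{-2}w)^{-1}E(q^{-2}w):$. As in Theorem~\ref{lem:TETF}, formal commutativity with the screening operators pins each scalar uniquely, giving the coefficients $-aq$ and $as_3$; the bracket $[T(z),F(w)]$ is handled identically.

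It then remains to cancel the intermediate poles lying strictly between $z=q^{-n-1}s_1w$ and $z=q^{n+1}s_1w$. I would proceed by induction on $n$, verifying pairwise cancellation of residues at the innermost values by means of the identities $(\bs{n-k})_{q^{n-k}s_1}\tilde\xi_{1^{(l)},0,\nu}=(\bs{n-l})_{q^{n-1}s_1}\tilde\xi_{1^{(k)},0,1^{(l-k)},\nu}$ for $0\le k<l\le n$, exactly as in the $\gl$ case. I expect the genuine obstacle to lie in the second residue: one must check that the $\osp$-specific replacement $s_1\mapsto s_1^2$ in the definitions of $\Lambda,\bar\Lambda$ and the doubled central term of $\chi_{1,1}$ conspire to reproduce precisely the coefficient $as_3$ and the shift $q^{-2}w$, rather than an asymmetric answer, and tracking these shifts through the mirror recursion is where the bookkeeping is most delicate.
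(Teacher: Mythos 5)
Your proposal is correct and follows essentially the same route as the paper, which proves this theorem simply by declaring it "proved in the same way as in the case of $\gl(2n|1)$," i.e.\ by the argument of Theorem \ref{lem:TETF}: localize the poles via the $\osp$ analog of Lemma \ref{lem:TE}, extract the residue at $z=q^{n+1}s_1w$ from the terms $T_i(z)E_{1,\ldots,1,\nu_{i-1},\ldots,\nu_1}(w)$ using the recursion, cancel the intermediate poles pairwise by induction, and fix the scalars by commutativity with the screening operators. You also correctly identify the only genuinely new feature --- the second delta term at $z=q^{-n-1}s_1w$ produced by the $1\leftrightarrow\bar1$ symmetry of the $\osp(2|2n)$ diagram (the $\bar 0$ branch of $\chi_{1,1}$ and the modified contraction $\cont{\Lambda(z)}{S_{\bar 1}(w)}=s_1^2z-w$) --- which is exactly what distinguishes this statement from its $\gl(2n|1)$ counterpart.
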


As we noted before, unlike the case of  $\gl(2n|1)$, 
the $qq$-characters $\chi_{k,1}$ with $k\ge2$ are not basic in general. 
This means that the bosonization $V_{\chi_{k,1}}(z)$ 
is not a sum of pure vertex operators but involves also derivatives of them. 
Nevertheless we expect to have the relation of the form
\begin{align*}
[E(z),F(w)]=\sum_{k=0}^{n-1}a_{n,k}\delta(q^{2n-2k} z/w)T_k(q^{n-k}s_1z)
+\sum_{k=0}^{n-1}a_{n,k}\delta(q^{2n-2k} w/z)\bar{T}_{k}(q^{n-k}s_1w)\,,
\end{align*}
where the currents $T_k(z),\bar{T}_k(z)$ are rationalizations of $V_{\chi_{k,1}}(z)$, $V_{\chi_{\bar k,1}}(z)$
and $a_{n,k}$ are some constants.

\subsection{Extended algebra for $\gl(n|m)$}\label{boson glnm sec}
Finally we give a few words about the case  
of $\gl(n|m)$ with standard parity 
discussed in Subsection \ref{gl(n|m) sec}. 

In the bosonization \eqref{Vchi} of a basic $qq$-character, 
the coefficients $c_m$ are to be determined from the 
commutativity with screening operators. More specifically we  
demand that two terms related by a root current pairwise cancel. 
In the present case of $\gl(n|m)$, this means the following.
Let $m_1,m_2$ be monomials which are related as
\begin{align*}
m_2=m_1\times
\begin{cases}
A_{i,aq}^{-1}& (i=1,\ldots,n-1),\\  
A_{i,as_1}^{-1}& (i=\bar1,\ldots,\overline{m-1}),\\ 
A_{0,as_3^{-1}}^{-1}& (i=0).\\  
\end{cases}
\end{align*}
Set $m_1=Y_{i,a}\prod_{b\neq a}Y_{i,b}^{n_{i,b}}M$,
$M=\prod_{j\neq i}\prod_bY_{j,b}^{n_{j,b}}$. 
Then the ratio $c_{m_2}/c_{m_1}$ is given by $c_{m_1,m_2}$ defined as
follows: 
\begin{align}
c_{m_1,m_2}=
\begin{cases}
q^{-2}\prod_{b\neq a} \omega_2(b/a)^{n_{i,b}}
&(i=1,\ldots,n-1),\\ 
s_1^{-2}\prod_{b\neq a} \omega_1(b/a)^{n_{i,b}}&
(i=\bar1,\ldots,\overline{m-1}),\\ 
-\prod_{b\neq a} \omega_0(b/a)^{n_{i,b}}&
(i=0),\\ 
\end{cases}
\label{twoterm}
\end{align}
where
\begin{align*}
\omega_2(x)=\frac{1-s_3^2x}{1-x}\frac{1-s_1^2x}{1-q^{-2}x}\,,
\quad
\omega_1(x)=\frac{1-s_3^2x}{1-x}\frac{1-q^2x}{1-s_1^{-2}x}\,,
\quad
\omega_0(x)=s_3^{-2}\frac{1-s_3^2x}{1-x}\,.
\end{align*}
Starting from the top monomial and applying the rule \eqref{twoterm}
we obtain  the following bosonization of $\chi_{1,1}$:
\begin{align*}
V_{\chi_{1,1}}(z)=
(s_2-s_2^{-1})\sum_{i=1}^{n}q^{2i-1}M_{i,1}(z)
+(s_1-s_1^{-1})\sum_{i=1}^{m}s_1^{-2i+1}M_{\bar1,1}(z)
\,,
\end{align*}
where $M_{i,a}$ as before
are monomials in $\chi_{1,1}$, see \eqref{chi-glnm}.
(They are not to be confused with those in Sections \ref{gl(2n|1) sec} and 
\ref{boson gl(2n|1) sec}.)

Consider now the bosonization of the $qq$-character $\xi_1$. 
It is written as a sum over semi-standard Young tableaux
 $T$ of shape $\lambda^{(0)}=(m,\ldots,m)$,
\begin{align}
&\xi_1=\sum_T \tilde{\xi}_T\,,\quad 
\tilde{\xi}_T=
\tau_{q^{n+1}s_1^{2m+1}}\Bigl(\bs{0}_{q^{-n-1}s_1^{-1}}
\prod_{i=1}^n\prod_{j=1}^m M_{T(i,j),
q^{-2i}s_1^{-2j}}\Bigr)\,.
\label{xi-tab}
\end{align}
The coefficient of a given monomial 
can be determined by applying \eqref{twoterm} repeatedly. 
The following lemma ensures that the result does not depend on the 
way of computation. 

\begin{lem}
Let 
$T_1,T_2,T_3,T_4$ be four semi-standard tableaux 
which differ from each other by only 
two places $(k,l),(k',l')$,
\begin{align*}
&\bigl(T_1(k,l),T_1(k',l')\bigr)=(r,s),\quad 
\bigl(T_2(k,l),T_2(k',l')\bigr)=(r,s'),\\
&\bigl(T_3(k,l),T_3(k',l')\bigr)=(r',s),\quad 
\bigl(T_4(k,l),T_4(k',l')\bigr)=(r',s'),
\end{align*}
such that the corresponding factors in \eqref{xi-tab}
are related as
$M_{r',a'}=M_{r,a}A_{j,a''}^{-1}$ and
$M_{s',b'}=M_{s,b}A_{j',b''}^{-1}$ with some 
$j,j'$ and $a,a',a'',b,b',b''$. Then we have
\begin{align}
c_{T_1,T_2}c_{T_2,T_4}=c_{T_1,T_3}c_{T_3,T_4}\,,
\label{cccc}
\end{align}
where $c_{T,T'}=c_{\tilde{\xi}_T,\tilde{\xi}_{T'}}$.
\end{lem}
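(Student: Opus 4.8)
The plan is to reduce the cocycle identity \eqref{cccc} to a handful of elementary rational-function identities among the structure functions $\omega_0,\omega_1,\omega_2$. The key starting observation, read off from \eqref{twoterm}, is that for a one-step transition $m_1\to m_2=m_1A_{i,\nu}^{-1}$ consuming a factor $Y_{i,a}$ of $m_1$, the ratio takes the form $c_{m_1,m_2}=\varepsilon_i\prod_{b\neq a}\omega_i(b/a)^{n_{i,b}}$, where $\varepsilon_i\in\{q^{-2},s_1^{-2},-1\}$ is a constant depending only on the color $i$ and $n_{i,b}$ is the power of $Y_{i,b}$ in $m_1$; in particular $c_{m_1,m_2}$ depends only on the color-$i$ part of $m_1$. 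As in the statement, let $Y_{i,\sigma}$-factors be denoted $\bs i_\sigma$, write $j,j'$ for the two colors, so that $\tilde\xi_{T_3}=\tilde\xi_{T_1}A_{j,a''}^{-1}$ and $\tilde\xi_{T_2}=\tilde\xi_{T_1}A_{j',b''}^{-1}$, and let $\alpha,\beta$ be the spectral parameters of the factors consumed at $(k,l)$ and $(k',l')$.

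First I would dispose of the prefactors: in \eqref{cccc} the pair $c_{T_1,T_2},c_{T_3,T_4}$ both use color $j'$ while $c_{T_1,T_3},c_{T_2,T_4}$ both use color $j$, so the $\varepsilon$'s match on the two sides and cancel. Since $c_{T_1,T_3}$ and $c_{T_2,T_4}$ apply the same root $A_{j,a''}^{-1}$ with the same consumed slot $\alpha$, while $\tilde\xi_{T_2}$ differs from $\tilde\xi_{T_1}$ exactly by the color-$j$ content of $A_{j',b''}^{-1}$ (and the slot $\alpha$ is untouched because the two boxes are distinct), I obtain
\begin{align*}
\frac{c_{T_2,T_4}}{c_{T_1,T_3}}=\prod_b\omega_j(b/\alpha)^{-[A_{j',b''}]_{j,b}}\,,\qquad
\frac{c_{T_3,T_4}}{c_{T_1,T_2}}=\prod_b\omega_{j'}(b/\beta)^{-[A_{j,a''}]_{j',b}}\,,
\end{align*}
where $[A_{i,\nu}]_{j,b}$ is the power of $Y_{j,b}$ in $A_{i,\nu}$. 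Hence \eqref{cccc} is equivalent to the single identity
\begin{align*}
\prod_b\omega_j(b/\alpha)^{-[A_{j',b''}]_{j,b}}=\prod_b\omega_{j'}(b/\beta)^{-[A_{j,a''}]_{j',b}}\,.
\end{align*}

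The remaining work I would organize by the relative position of $j$ and $j'$ in the Dynkin diagram. When $j\neq j'$ are non-adjacent, the off-diagonal powers $[A_{j',b''}]_{j,b}$ and $[A_{j,a''}]_{j',b}$ all vanish, both sides equal $1$, and indeed $c_{T_2,T_4}=c_{T_1,T_3}$ and $c_{T_1,T_2}=c_{T_3,T_4}$ separately. When $j,j'$ are adjacent or equal, each side is a product of one or two $\omega$-factors, and a short computation with the box parameters $q^{-2k}s_1^{-2l}$ shows that the two arguments $b''/\alpha$ and $a''/\beta$ have a product that is a fixed monomial in $s_1,q$, independent of the boxes, because $s_1s_2s_3=1$. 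The identity then becomes a concrete functional relation; the simplest instances are $\omega_2(y)=\omega_2(q^2/y)$ and $\omega_1(y)=\omega_1(s_1^2/y)$ (for equal or like-bosonic colors), and a mixed relation such as $\omega_0(s_1^{-1}y)/\omega_0(s_1y)=\omega_2(q^2s_1/y)$ (for a fermionic--bosonic adjacent pair). Each I would verify by clearing denominators and checking that the two rational functions share the same zeros and poles, using $s_3=(s_1q)^{-1}$.

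I expect the main obstacle to be purely organizational: one must tabulate, for each of the three color types (the $\omega_2$- and $\omega_1$-type bosonic nodes and the fermionic node $0$), how the consumed parameter $\alpha$ relates to the root parameter $a''$ (they differ by $q^{\pm1}$, $s_1^{\pm1}$ or $s_3^{\pm1}$), and then run through the finitely many adjacent-pair types. The delicate case is an adjacent pair involving the fermionic node $0$: there one side of the identity is a ratio of two $\omega_0$-factors (since the entry $c_{0,1}=t_1$ is a binomial) while the other is a single $\omega_2$- or $\omega_1$-factor (since $c_{1,0}=-1$ is a monomial), and matching the resulting degree-two rational functions is exactly the manifestation of the symmetry $B_{j,j'}=B_{j',j}$ of the symmetrized Cartan matrix. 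Once all these adjacent-pair identities are checked, the displayed equivalence yields \eqref{cccc} in every case.
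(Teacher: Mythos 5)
Your proof is correct and follows essentially the same route as the paper's: both reduce the cocycle condition to the equality of the two ratios $c_{T_2,T_4}/c_{T_1,T_3}$ and $c_{T_3,T_4}/c_{T_1,T_2}$ and then verify it case by case using the functional identities $\omega_2(x)=\omega_2(s_2^2/x)$, $\omega_1(x)=\omega_1(s_1^2/x)$ and $\omega_2(x)=\omega_0(x)/\omega_0(s_1^2x)$, which are exactly the paper's \eqref{omega-id}. Your single displayed reduction to an identity involving only the two roots is a slightly cleaner packaging of the paper's seven explicit monomial-pair cases, but the underlying computation is the same.
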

\begin{proof}
There are the following non-trivial cases to consider:
\begin{align*}
(1)\quad &M_{r,a}=M_{i+1,a},\ M_{r',a'}=M_{i,aq^2} \,,
\quad M_{s,b}=M_{i+1,b},\ M_{s',b'}=M_{i,bq^2}\,
\quad (i=1,\ldots,n-1)\,,\\
(2)\quad &M_{r,a}=M_{1,a},\ M_{r',b'}=
M_{\bar{1},a s_3^{-2}} \,,
\quad M_{s,b}=M_{1,b},\ M_{s',b'}=M_{\bar{1},bs_3^{-2}}\,,\\
(3)\quad &M_{r,a}=M_{\bar{i},a}, \, 
M_{r',a'}=M_{\overline{i+1},as_1^2} \,,
\quad M_{s,b}=M_{\bar{i},b},\ 
M_{s',b'}=M_{\overline{i+1},bs_1^2}\,
\quad (i=1,\ldots, m-1)\,,\\
(4)\quad &M_{r,a}=M_{i,a},\ M_{r',a'}=M_{i-1,aq^2}\,,\quad 
M_{s,b}=M_{i+1,b},\ M_{s',b'}=
M_{i,bq^2}\,\quad (i=2,\ldots, n-1)\,,\\
(5)\quad &M_{r,a}=M_{1,a},\ 
M_{r',a'}=M_{\bar{1},as_3^{-2}} \,,\quad 
M_{s_b}=M_{2,b},\ M_{s',b'}=M_{1,bq^2}\,,\\
(6)\quad &M_{r,a}=M_{\bar{1},a},\ 
M_{r',a'}=M_{\bar{2},as_1^2} \,,\quad 
M_{s,b}=M_{1,b},\ M_{s',b'}=M_{\bar1,bs_3^{-2}}\,,\\
(7)\quad &M_{r,a}=M_{\bar{i},a}, \, 
M_{r',a'}=M_{\overline{i+1},as_1^2} \,,\quad 
M_{s,b}=M_{\overline{i+1},b},\ 
M_{s',b'}=M_{\overline{i+2},bs_1^2}\,
\quad (i=2,\ldots,m-1)\,.
\end{align*}
In the case (1) we compute
\begin{align*}
&\frac{c_{T_3,T_4}}{c_{T_1,T_2}}=
\frac{\omega_2(s_2^2 b/a)^{-1}}{\omega_2(b/a)}\,, 
\quad \frac{c_{T_2,T_4}}{c_{T_1,T_3}}= 
\frac{\omega_2(s_2^2 a/b)^{-1}}{\omega_2(a/b)}\,,
\end{align*}
from which \eqref{cccc} follows in view of the relation
\begin{align}
&\omega_2(x)=\omega_2(s_2^2/x)=\frac{\omega_0(x)}{\omega_0(s_1^2x)}\,.
\label{omega-id}
\end{align}

The other cases can be checked similarly, 
using \eqref{omega-id} and changing the roles of $s_1$ and $s_2$ as necessary.
\end{proof}

The bosonization of  $\eta_1=\bs{0}^1$ is trivial. 
Thus we obtain an extension of the $W$ algebra of type $\gl(n|m)$. 
In this paper we do not discuss the relations of these currents. 

\bigskip

\begin{rem}\label{remMac}
It would be interesting to describe the coefficients $c_m$ 
of a bosonization of $qq$-characters in a general situation. 
In this regard, we note 
the following intriguing result due to \cite{FHSSY}.

Consider $\gl(n)=\gl(n|0)$, and 
let $V_{\chi_1}(z)=\sum_{i=1}^n u_i\Lambda_i(z)$ 
be the bosonization of the basic $qq$-character $\chi_1$ (i.e. 
the fundamental current of the $W(\gl_n)$ algebra).
Here $\Lambda_i(z)$'s are vertex operators and $u_i$'s are evaluation parameters,
see e.g. \cite{FJMV}, Section 3.1.
Let $\chi_\lambda$ be the current corresponding to a partition $\lambda$. 
Then we have (choosing an overall multiple appropriately) 
\begin{align*}
V_{\chi_\lambda}(z)=\sum_{T}\psi_T u^T :\Lambda_T(z): \,,
\end{align*}
where $u^T=\prod_{i=1}^{\ell(\lambda)}\prod_{j=1}^{\lambda_i}u_{T(i,j)}$, 
the sum is taken over semi-standard tableaux $T$ 
of shape $\lambda$, 
and $\psi_T$'s denote the Pieri coefficients for the Macdonald polynomials
\footnote{Macdonald's  $q$ and $t$ are $s_1^2$ and $s_2^{-2}$ in the current 
notation.}
(see (7.11') and (7.13') in \cite{M}).
In particular, the vacuum expectation value of $V_{\chi_\lambda}(z)$ 
coincides with the Macdonald polynomial
\begin{align*}
P_\lambda=\sum_{T}\psi_T u^T=\langle V_{\chi_\lambda}(z)  \rangle\,.
\end{align*}
\qed
\end{rem}
\bigskip 

{\bf Acknowledgments.\ }
The study of BF has been funded within the framework of the HSE University Basic Research Program. MJ is partially supported by 
JSPS KAKENHI Grant Number JP19K03549. 
EM is partially supported by Simons Foundation grant number \#709444.

BF thanks Jerusalem University for hospitality. EM thanks Rikkyo University, where a part of this work was done, for hospitality.

\end{document}